\documentclass[11pt, reqno]{amsproc}
\usepackage{amsmath, amsthm, amscd, amsfonts, amssymb, graphicx, color}
\usepackage{enumerate}
\usepackage{mathrsfs}
\usepackage{multirow}
\usepackage{lscape}
\usepackage{cite}
\usepackage[bookmarksnumbered, colorlinks, plainpages]{hyperref}
\hypersetup{colorlinks=true,linkcolor=red, anchorcolor=green, citecolor=cyan, urlcolor=red, filecolor=magenta, pdftoolbar=true}

\newtheorem{theorem}{Theorem}[section]
\newtheorem{lemma}[theorem]{Lemma}
\newtheorem{proposition}[theorem]{Proposition}
\newtheorem{corollary}[theorem]{Corollary}
\theoremstyle{definition}
\newtheorem{definition}[theorem]{Definition}

\theoremstyle{remark}
\newtheorem{remark}[theorem]{Remark}
\newtheorem{note}[theorem]{Note}

\numberwithin{equation}{section}

\begin{document}

\setcounter{page}{1}

\title[Dual of the Hatdy space associated to the Dunkl-Schr\"odinger operator]
 {The dual of the Hardy space associated to the Dunkl-Schr\"odinger operator with reverse H\"older class potential}

 \author[Athulya P, S.K. Verma]{
  Athulya P \and Sandeep Kumar Verma \\
\small{Department of Mathematics, SRM University AP, Amaravati 522240, Andhra Pradesh, India}\\ 
    Author's Email: \texttt{athulya.panoli97@gmail.com}, \texttt{ sandeep16.iitism@gmail.com}
}

\subjclass[2010]{30H10, 42B25, 42B35, 51F15}

\keywords{Dunkl operators, Hardy space, Schr\"odinger operator, Maximal operator}

\begin{abstract}  
Let $\mathcal{L}_k = -\Delta_k + V$ be a Schr\"odinger operator associated with the Dunkl Laplacian $\Delta_k$, where $V$ is the non-negative potential function belonging to the reverse H\"older class  $RH_k^q(\mathbb{R}^n)$ with $q> \max\{1, \frac{n+2\gamma}{2}\}$. Here, $2\gamma$ denotes the degree of homogeneity of the weight function $w_k$, which is determined by the normalized root system and the non-negative multiplicity function $k$. In this paper, we investigate the dual space of the Hardy space $H_{\Tilde{\mathcal{L}}_k}^1$ associated with the Dunkl-Schr\"odinger operator. The dual space $BMO(\mathcal{L}_k)$ is a subspace of the $BMO_k$ space, which is the Dunkl analogue of the classical $BMO(\mathcal{L})$ space. We provide a characterization for the $BMO(\mathcal{L}_k)$ space. The duality result is obtained via the atomic decomposition of $H_{\Tilde{\mathcal{L}}_k}^1$, where the cancellation condition of atoms depends on the critical radius function associated with the potential $V$. Finally, we establish the boundedness of the uncentered maximal function on the space $BMO(\mathcal{L}_k)$.
\end{abstract}
\maketitle

\section{Introduction}\label{S:1}
The theory of Hardy space traces its origin to the early twentieth century, around 1910, arising naturally in the study of Fourier series and complex analysis in one variable. From that period onward, Hardy spaces have become a fundamental tool in modern harmonic analysis, providing deep insights into core topics such as maximal functions, singular integrals, and the structure of $L^p$ spaces \cite{Stein-book-1, Stein-1960}. Hardy spaces can also be seen as the natural substitute for $L^p$ spaces when $0 < p < 1$. In this range, many fundamental operators in harmonic analysis cannot be handled effectively using the standard $L^p$ norm. This difficulty arises because functions in $L^p$ for $0 < p < 1$ do not necessarily define distributions, and the space lacks a nontrivial dual. In this setting, Hardy spaces $H^p$ provide a more suitable framework for analysis. For instance, certain Sobolev-type embedding results that fail in the $L^p$ setting for $0 < p < 1$ can be recovered by replacing $L^p$ with $H^p$ \cite{Stein-book-2}.

Functions in Hardy spaces are characterized by an atomic decomposition, where the atoms satisfy the delicate balance between integrability and cancellation properties, making them particularly well-suited for studying the boundedness of singular integral operators. Moreover, the atomic decomposition paves the way for the formation of the dual space. The celebrated result of Fefferman and Stein \cite{Fefferman-1972} identifies $BMO$ as the dual of $H^1$. The space $BMO$ is the collection of functions of bounded mean oscillation, introduced by John and Nirenberg \cite{John-Nirenberg-1961}, which has come to play a central role in harmonic analysis and partial differential equations. Apart from their favorable properties, Hardy spaces exhibit certain drawbacks. In particular, $H^p$ does not contain the Schwartz space $\mathcal{S}(\mathbb{R}^n)$, it is not well defined on manifolds, and does not admit bounded pseudo-differential operators. These difficulties are closely related to the strong cancellation property of functions in $H^p$, for instance, the fact that $\int f = 0$ for $f \in H^p$. As a consequence, multiplication by smooth functions, which may be regarded as pseudo-differential operators, does not behave well on $H^p$. These limitations create technical difficulties in various analytical settings. To address these issues, Goldberg introduced local versions of Hardy and $BMO$ spaces, denoted by $h^p$ and $bmo$, respectively \cite{Goldberg}. These spaces retain the essential features of their global counterparts while incorporating localization, thereby allowing smoother function classes such as $\mathcal{S}(\mathbb{R}^n)$ to be included. Goldberg also established the duality between $h^1$ and $bmo$, thereby extending the classical Fefferman-Stein theory to a local framework. Gradually, the Hardy space theory has been generalized far beyond the Euclidean setting. The classical Laplacian on $\mathbb{R}^n$ has been replaced by more general operators, and Euclidean spaces have been extended to manifolds and, more broadly, to metric measure spaces equipped with a doubling measure \cite{Badr, Lin, Christ}. \\

In recent decades, increasing attention has been devoted to Hardy spaces associated with differential operators, particularly the Schr\"odinger operator $\mathcal{L} = -\Delta + V,$ where $V \geq 0$ and $V \not\equiv 0$ \cite{Dziubanski-1999, Dziubanski-2005, Jacek-2004}. The introduction of the potential term significantly alters the analytical behavior of the operator,  requiring refined tools. This framework has been further generalized to families of Laplace-type operators, see for example \cite{Lin_C}. One notable example in $\mathbb{R}^n$ is the Dunkl Laplacian $\Delta_k$, introduced by Dunkl, which arises from a system of differential-difference operators associated with a finite reflection group \cite{D2}.

Dunkl operators have attracted considerable interest due to their rich algebraic structure and wide range of applications in mathematics and physics. Dunkl theory can be viewed as a far-reaching generalization of classical Fourier analysis. It includes special functions associated with root systems, spherical functions on Riemannian symmetric spaces \cite{Heckman}. For further developments, we refer the reader to the works on rational Dunkl theory \cite{Dunkl-Xu, Rosler-03}, trigonometric Dunkl theory \cite{Opdam-2000}, affine Hecke algebras and $q$-Dunkl theory \cite{Cherednik, Macdonald}, probabilistic aspects of Dunkl theory \cite{Graczyk}, and integrable systems related to Dunkl operators \cite{Etingof}. 

\par The study of Schr\"odinger operator associated with the Dunkl Laplacian $$\mathcal{L}_k=-\Delta_k + V, \quad \text{for $V\geq 0$ and $V\neq 0$}$$  was initiated by Amri and Hammi in 2018, where they developed the foundational theory \cite{Amri-2018} and later investigated the corresponding Riesz transforms and the boundedness of the Riesz transforms \cite{Amri-2021}. Subsequently, Hejna established an atomic decomposition for the Hardy space associated with the Dunkl-Schr\"odinger operator $\mathcal{L}_k$ with the reverse H\"older class $RH_k^q$ potential function $V$ \cite{Hejna-2021}. 

 In this article, we focus on identifying the dual space of the Hardy space associated with $\mathcal{L}_k$. In the classical setting, Dziuba\'nski \textit{et al.} established that the dual of the Hardy space associated with the Schr\"odinger differential operator is $BMO(\mathcal{L})$, which is a subspace of $BMO$ \cite{Dziubanski-2005}. Motivated by this result, we introduce a new function space $BMO(\mathcal{L}_k)$, which serves as a natural analogue of the classical $BMO(\mathcal{L})$ space in the Dunkl-Schr\"odinger framework. We provide a characterization of $BMO(\mathcal{L}_k)$ and show, in particular, that the local $bmo$ space can be realized as a special case of $BMO(\mathcal{L}_k)$.

In \cite{Dziubanski-2005}, the duality result is based on the atomic decomposition of the Hardy space, where the cancellation property of the atoms depends on the critical radius function. In contrast, the currently available decomposition of the Hardy space associated with $\mathcal{L}_k$ is formulated in terms of atoms whose cancellation condition depends on the size of the cubes \cite{Hejna-2021}. To establish the desired duality in our setting, we impose suitable assumptions on the potential function to obtain the corresponding atomic decomposition. Furthermore, we study the boundedness of the uncentered maximal function associated with the Dunkl weight function on the space $BMO(\mathcal{L}_k)$. In the Euclidean setting, the boundedness of the maximal operator is typically established using the Calder\'on-Zygmund  decomposition \cite{Bennett}. Although Calder\'on-Zygmund decompositions are available in spaces of homogeneous type and are useful in various contexts, the structural differences between the Euclidean framework and the present setting make their direct application less convenient. Therefore, we adapt the proof technique developed in the context of the Heisenberg group \cite{Lin}.

The paper is organized as follows. In Section \ref{S:2}, we recall the necessary preliminaries from Dunkl theory. Section \ref{S:3} is devoted to the definition and characterization of the space $BMO(\mathcal{L}_k)$. In Section \ref{S:4}, we establish the duality between the Hardy space associated with the Dunkl-Schr\"odinger operator and $BMO(\mathcal{L}_k)$. We study the behavior of the maximal function over the $BMO(\mathcal{L}_k)$ space in Section \ref{S:5}.

\section{Preliminaries}\label{S:2}
The Dunkl theory started with the seminal work of Dunkl and developed extensively afterward (see references \cite{D2, Rosler-03}). In this section, we present some preliminary materials on the Dunkl analysis and the Dunkl-Schr\"odinger operator. 
\subsection{Dunkl theory}
We consider the Euclidean space $\mathbb{R}^n$ associated with the inner product $\langle x, y \rangle = \sum_{j=1}^n x_jy_j$ and the norm $|x|^2= \langle x,x \rangle$, where $x=(x_1,x_2,\cdots, x_n )$ and $y=(y_1,y_2,\cdots, y_n)$. For a nonzero vector $\zeta \in \mathbb{R}^n$, the reflection map $\sigma_\zeta$ is given by 
\begin{align*}
    \sigma_\zeta(x)=x-2\frac{\langle x, \zeta \rangle}{|\zeta|^2}\zeta.
\end{align*} 
 A finite set of non-zero vectors $\mathcal{R}$ in $\mathbb{R}^n$ is called the root system, if for any $\zeta \in \mathcal{R}$, $\sigma_\zeta(\mathcal{R})=\mathcal{R}$, and $\mathcal{R}\cap \mathbb{R}\zeta=\{ \pm\zeta\}$. $\mathcal{R}$ can be represented as a disjoint union of sets $\mathcal{R}=\mathcal{R}_+ \cup \mathcal{R}_-$, where $\mathcal{R}_+=\{ \zeta \in \mathcal{R}: \langle \zeta, b \rangle >0\}$ for a fixed $b\in \mathbb{R}^n \setminus \bigcup\limits_{\zeta\in \mathcal{R}} \{\zeta\}^{\perp} $. This decomposition is not unique; it may vary depending on the vector $b\in \mathbb{R}^n$. We normalize the root system by $|\zeta|^2=2$ for all $\zeta \in \mathcal{R}$. The associated group generated by the reflections $\{\sigma_\zeta; \zeta \in \mathcal{R}\}$ is called the reflection group $G$. The group $G$ induces a distance function, known as the orbital distance, defined by $ d_\mathcal{O}(x,y)= \min \limits_{\sigma_\zeta\in G}\|x-\sigma_\zeta(y)\|$, where $\sigma_\zeta$ are reflections of $G$ and $\zeta\in \mathcal{R}$. The complex valued function $k$ on $\mathcal{R}$ is called the multiplicity function if $k(\zeta)=k(g(\zeta))$, for all $g\in G$. Throughout the paper, we consider $k \geq 0$. The weight function is given by $w_k(x)= \prod_{\zeta\in \mathcal{R}_+}|\langle \zeta,x\rangle|^{2k(\zeta)}$, which is group invariant and homogeneous of degree $2\gamma$, where $\gamma= \sum_{\zeta \in \mathcal{R}_+} k(\zeta)$. The weight function induces a measure on $\mathbb{R}^n$ defined by
$\omega_k(A)=\int_A w_k(x)dx$, for any Borel measurable set $A \subset\mathbb{R}^n$. It is given in \cite{Hejna-2019} that there exists positive constants $c $ and $C$ such that 
\begin{align*} 
    c r^n \prod\limits_{\zeta\in \mathcal{R}_+}\left( |\langle x,\zeta \rangle|+r\right)^{2k(\zeta)} \leq \omega_k(B(x,r)) \leq Cr^n \prod\limits_{\zeta\in \mathcal{R}_+}\left( |\langle x,\zeta \rangle|+r\right)^{2k(\zeta)},
\end{align*} where $B(x,r)$ is the Euclidean ball with center $x$ and radius $r$.
This implies that measure $\omega_k$ satisfies the doubling condition; that is, there exists a constant $C_{\omega_k}>0$ such that 
\begin{align}\label{doubling property}
    \omega_k(B(x,2r)) \leq C_{\omega_k}\omega_k(B(x,r)) \text{ for all } x\in \mathbb{R}^n \text{ and } r>0.
\end{align} In addition, there exists a constant $C\geq 1$ such that 
\begin{align} \label{Volume ratio}
    C^{-1} \left( \frac{r}{R} \right)^{n+2\gamma} \leq \frac{\omega_k(B(x, r))}{\omega_k(B(x, R))} \leq   C \left( \frac{r}{R} \right)^{n+2\gamma},  
\end{align} for $x\in \mathbb{R}^n $ and $0< r\leq R$.\\
The associated weighted Lebesgue space \(L_k^p(\mathbb{R}^n)\), for \(1\le p<\infty\), is defined by
\[
L_k^p(\mathbb{R}^n)
=
\left\{
f:\mathbb{R}^n\to \mathbb{C}\ \text{measurable} :
\int_{\mathbb{R}^n}|f(x)|^p w_k(x)\,dx < \infty
\right\},
\]
where \(dx\) denotes the Lebesgue measure on \(\mathbb{R}^n\).
For \(p=\infty\), we define
\[
L_k^\infty(\mathbb{R}^n)
=
\left\{
f:\mathbb{R}^n\to \mathbb{C}\ \text{measurable} :
\operatorname*{ess\,sup}_{x\in \mathbb{R}^n}|f(x)|<\infty
\right\}.
\] Moreover, $ L^1_{k,\text{loc}}(\mathbb{R}^n)$ is the locally integrable function over $\mathbb{R}^n$ associated  with the weight $w_k$.

For a fixed root system $\mathcal{R}$ and multiplicity function $k$, the Dunkl operator  (or differential-difference operator) \cite{D2} is defined by 
\begin{align*}
    \mathcal{T}_\xi f(x)= \partial_\xi f(x) + \sum_{\zeta \in \mathcal{R}_+}k(\zeta) \langle \zeta, \xi \rangle \frac{f(x)-f(\sigma_\zeta(x))}{\langle \zeta, x\rangle}, \text{ for } f \in \mathcal{C}^1(\mathbb{R}^n) \text{ and } \xi \in \mathbb{R}^n. 
\end{align*} Here, $\partial_\xi$ is the directional derivative in the direction of $\xi$; correspondingly, $\mathcal{T}_\xi$ can be viewed as the perturbation of the directional derivative in the Euclidean framework. Let $\{e_1, e_2, \cdots, e_n \}$ denote the standard orthonormal basis of \(\mathbb{R}^n\). For notational convenience, we write
$\mathcal{T}_j=\mathcal{T}_{e_j}; \, j=1,2,\dots,n.$ The Dunkl Laplacian \cite{D2}  is given by 
\begin{align*}
    \Delta_k = \sum_{j=1}^n \mathcal{T}_j^2.
\end{align*}
The operator $-\Delta_k$ is densely defined, positive, and symmetric over $L_k^2(\mathbb{R}^n)$ and has a unique positive extension \cite{Amri-2018}. In addition,  $-\Delta_k$ generates a semigroup $H_t$ of linear self-adjoint contractions on $L_k^2(\mathbb{R}^n)$, and it admits the following representation 
\begin{align*}
H_tf(x)=\int_{\mathbb{R}^n}h_t(x,y)f(y)w_k(y)dy,\end{align*}
where  
\begin{align*}
 h_t(x,y) = \tau_x \left( c_k^{-1}(2t)^{-(n+2\gamma)/2} e^{-|\cdot|^2/4t} \right)(-y), 
\end{align*}
The operator $\tau_x$ denotes the generalized translation operator; for further details, readers are referred to \cite{TV-15}. The heat kernel $h_t(x,y)$ is a $\mathcal{C}^\infty$ function on the variables $x,y\in \mathbb{R}^n$ and $t>0$. For all $x,y \in \mathbb{R}^n$ and $t>0$, it satisfies the following conditions \cite{Hejna-2019}:
\begin{enumerate} [$(i)$]
    \item Symmetric property: $h_t(x,y)=h_t(y,x)$ .\\
    \item  $ \int_{\mathbb{R}^n} h_t(x,y)w_k(y)dy = 1$ \\
    \item  There exists constants $C,c>0$ such that 
    \begin{align} \label{heat kernel esti_2}
        h_t(x,y) \leq C \left( 1+ \frac{|x-y|}{\sqrt{t}} \right)^{-2} \mathcal{G}_{t/c}(x,y),
    \end{align} where 
    \begin{align*}
        \mathcal{G}_{t}(x,y) = \left( \max \{ \omega_k(B(x,\sqrt{t})), \omega_k(B(y,\sqrt{t})) \} \right)^{-1} \text{exp} \left( -\frac{d_\mathcal{O}(x,y)^2}{t} \right).
    \end{align*}
\end{enumerate}
\subsection{Dunkl-Schr\"odinger operator}Given a non-negative function $V\in L_{k,\text{loc}}^1(\mathbb{R}^n)$, we consider the Dunkl-Schr\"odinger operator $\mathcal{L}_k = -\Delta_k + V$. The operator $\mathcal{L}_k$ is densely defined on the Hilbert space $L_k^2(\mathbb{R}^n)$ with domain  
\[
\operatorname{Dom}(\mathcal{L}_k) = \left\{ f \in L_k^2(\mathbb{R}^n) : \Delta_k f \ \text{and} \ M_V f \in L_k^2(\mathbb{R}^n) \right\},
\]
where $M_V f(x) = V(x)f(x)$ isfor  the multiplication operator \cite{Amri-2018}. We also consider the associated quadratic form
\begin{align*}
    \mathcal{Q}(f, g) = \sum_{j=1}^n \langle \mathcal{T}_jf, g \rangle + \langle Vf, g\rangle
\end{align*} with the domain 
$$ \text{Dom}(\mathcal{Q})= \Bigg\{f\in L_k^2(\mathbb{R}^n):  \left(\sum_{j=1}^n|\mathcal{T}_jf|^2 \right)^{1/2}  \text{and } M_{\sqrt{V}}f \in L_k^2(\mathbb{R}^n)\Bigg\}.
$$ 
Since the space $\mathcal{C}_c^\infty(\mathbb{R}^n)$ is invariant under Dunkl operators \cite{DJ1}, we have  
$\mathcal{C}_c^\infty(\mathbb{R}^n) \subset \operatorname{Dom}(\mathcal{Q})$. Hence, the quadratic form $\mathcal{Q}$ is densely defined in $L_k^2(\mathbb{R}^n)$ \cite[Lemma 4.1]{Amri-2018}. Also note that $\mathcal{Q}$ is a closed quadratic form. Consequently, there exists a unique positive self-adjoint operator $\Tilde{\mathcal{L}}_k$ such that 
\begin{align*}
    \langle \Tilde{\mathcal{L}}_kf, f \rangle = \mathcal{Q}(f, f), \quad f \in \text{Dom}(\Tilde{\mathcal{L}}_k).
\end{align*} In addition,  $ \text{Dom}(\Tilde{\mathcal{L}}_k) = \text{Dom}(\mathcal{Q}) \text{ and }  \mathcal{Q}(f,f)= \|\Tilde{\mathcal{L}}_k^{1/2}f\|^2_{L_k^2(\mathbb{R}^n)}.$
Here $\Tilde{\mathcal{L}}_k^{1/2}$ is the unique positive self-adjoint operator on $L_k^2(\mathbb{R}^n)$ such that $(\Tilde{\mathcal{L}}_k^{1/2})^2=\Tilde{\mathcal{L}}_k$ and we have the followings \cite{Amri-2018}:\\
$(i)$ $\Tilde{\mathcal{L}}_k$ is the closure of the operator $\mathcal{L}_k$.\\
$(ii)$ $\widetilde{\mathcal{L}}_k$ generates a strongly continuous semigroup of self-adjoint contractions 
$\{e^{-t\widetilde{\mathcal{L}}_k}\}_{t>0}$ on $L_k^2(\mathbb{R}^n)$. 
Moreover, for each $t>0$, the semigroup admits the integral representation
\begin{align} \label{Integral operator}
e^{-t\widetilde{\mathcal{L}}_k} f(x)
= \mathbf{K}_t f(x)
= \int_{\mathbb{R}^n} k_t(x,y)\, f(y)\, w_k(y)dy,
\end{align}
where the kernel $k_t(x,y)$ is nonnegative and satisfies the estimate
\begin{align}\label{kernel estimate_1}
0 \leq k_t(x,y) \leq h_t(x,y), \quad \text{for all } x,y \in \mathbb{R}^n, \; t>0.
\end{align}
\subsection{Critical radius function}
We assume that \(q>\max\left\{1,\frac{n+2\gamma}{2}\right\}\) and the potential function $V\geq 0$ belongs to the reverse H\"older class $RH_k^q(\mathbb{R}^n)$ \cite{Hejna-2021}, that is, there exists a constant \(C_{RH}>0\) such that
\begin{align} \label{RVC}
\left( \frac{1}{\omega_k(B)} \int_B V(x)^q\, w_k(x)\,dx \right)^{1/q}
\leq C_{RH}\, \frac{1}{\omega_k(B)} \int_B V(x)\, w_k(x)\,dx,
\end{align}
for every ball \(B \subset \mathbb{R}^n\).  
The potential \(V\) induces a function \(\rho_k\), called the critical radius function, on \(\mathbb{R}^n\), defined by
\[
\rho_k(x)
=
\sup \left\{
r>0:
\frac{r^2}{\omega_k(B(x,r))}
\int_{B(x,r)} V(\xi)\,w_k(\xi)\,d\xi
\le 1
\right\}.
\]
The function \(\rho_k\) was introduced in \cite{Hejna-2021} as a natural analogue of the classical critical radius function studied by Shen \cite{Shen-1995} and Dziuba\'nski \cite{Dziubanski-2005}. The function \(m_k(x,V)\) denotes the reciprocal of \(\rho_k(x)\).
The fundamental properties of \(m_k(x,V)\) are established in \cite{Hejna-2021}. In this subsection, we derive some additional properties of \(m_k(x,V)\) that will be useful in the sequel.
\\ \\
For each $j\in \mathbb{Z}$, we define the subsets of $\mathbb{R}^n$ in terms of the function $m_k(x,V)$ as
\begin{align*}
    {\digamma}_j = \{x\in \mathbb{R}^n: 2^{j/2} \leq m_k(x,V) < 2^{(j+1)/2} \}. 
\end{align*}Using \cite[Lemma 7]{Hejna-2021} we know that $0< m_k(x,V)< \infty$. Then $\mathbb{R}^n=\bigcup\limits_{j\in \mathbb{Z}} \digamma_j$. The following result gives the finite overlapping property of the collection $\digamma_j$ and is analogous to \cite[Lemma 2]{Dziubanski-1999}.

\begin{lemma}\label{Lemma_1}
Let $x \in \digamma_j$, for $j \in \mathbb{Z}$. Then there exists a constant $C>0$ such that for every $R>2$, we have 
\begin{align*}
    \{d: B(x, 2^{-j/2}R)\cap \digamma_d \neq \emptyset\} \subset [j-C\log_2R, j+1+C\log_2R].
\end{align*}
\end{lemma}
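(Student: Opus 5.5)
The plan is to use the standard comparison estimate for the critical function, the Dunkl analogue of Shen's Lemma 1.4: there exist constants $C_0>0$ and $k_0\ge 1$ such that for all $x,y\in\mathbb{R}^n$,
\begin{align*}
C_0^{-1}\, m_k(x,V)\,\bigl(1+|x-y|\,m_k(x,V)\bigr)^{-k_0} \le m_k(y,V) \le C_0\, m_k(x,V)\,\bigl(1+|x-y|\,m_k(x,V)\bigr)^{k_0/(k_0+1)}.
\end{align*}
This is among the fundamental properties of $m_k(\cdot,V)$ established in \cite{Hejna-2021}, which we take as given. Such an estimate follows from the reverse H\"older condition \eqref{RVC} together with the doubling and volume-ratio bounds \eqref{doubling property} and \eqref{Volume ratio}. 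If only a weaker form with exponents depending on $n+2\gamma$ and $q$ is available, the argument below is unchanged, since only the fact that the exponents are fixed constants is used.

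Fix $x\in\digamma_j$, so that $2^{j/2}\le m_k(x,V)<2^{(j+1)/2}$. Let $y\in B(x,2^{-j/2}R)\cap\digamma_d$, so that $2^{d/2}\le m_k(y,V)<2^{(d+1)/2}$. The key quantity is
\begin{align*}
|x-y|\,m_k(x,V) < 2^{-j/2}R\cdot 2^{(j+1)/2} = \sqrt2\,R,
\end{align*}
hence $1+|x-y|m_k(x,V)\le (1+\sqrt2)R\le 2R$, using $R>2$.

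For the upper bound on $d$, the right-hand inequality gives
\begin{align*}
2^{d/2}\le m_k(y,V)\le C_0\,2^{(j+1)/2}(2R)^{k_0}.
\end{align*}
Taking $\log_2$ yields $d\le j+1+2\log_2C_0+2k_0(1+\log_2R)$. For the lower bound, the left-hand inequality gives
\begin{align*}
2^{(d+1)/2}> m_k(y,V)\ge C_0^{-1}2^{j/2}(2R)^{-k_0},
\end{align*}
so $d\ge j-1-2\log_2C_0-2k_0(1+\log_2R)$.

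Finally, the additive constants must be absorbed into the logarithmic term, and this is where $R>2$ is used: since $\log_2R>1$, any constant $A$ satisfies $A\le A\log_2R$. Choosing $C=2k_0+2k_0+2\log_2C_0+1$ (or any larger constant) gives $d\in[j-C\log_2R,\,j+1+C\log_2R]$. The only substantive point is the comparison lemma for $m_k$. The remaining bookkeeping is routine, apart from checking that the constants in that lemma depend only on $n$, $\gamma$, $q$ and $C_{RH}$, so that $C$ is independent of $x$, $j$ and $R$.
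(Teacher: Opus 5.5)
Your proof is correct and follows the same route as the paper: apply the comparison estimate \cite[Lemma 8]{Hejna-2021} with $|x-y|\,m_k(x,V)\lesssim R$, take $\log_2$, and absorb the additive constants using $\log_2 R>1$ (a step the paper leaves implicit). There are two harmless slips. First, your comparison estimate has the exponents swapped relative to the cited lemma: the upper bound for $m_k(y,V)$ carries $\kappa_0$ and the lower bound carries $\kappa_0/(\kappa_0+1)$, and an upper exponent below $1$ is actually false, e.g.\ for $V(x)=|x|^2$. This does not affect your argument, since you bound both factors by $(2R)^{\pm k_0}$ anyway. Second, $(1+\sqrt2)R\le 2R$ is false, but the inequality you need, $1+\sqrt2\,R\le 2R$, does hold for $R>2$.
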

\begin{proof}
    Using \cite[Lemma 8]{Hejna-2021}, there exists constants $C_0, \kappa_0>0$ such that for every $x,y\in \mathbb{R}^n$ we have 
    \begin{align}
        m_k(y,V) \leq C_0 m_k(x,V) \left( 1+ m_k(x,V)|x-y|\right)^{\kappa_0}  \label{Inq_1}
    \end{align} and 
    \begin{align}
        m_k(y,V) \geq \frac{m_k(x,V)}{C_0\left( 1+ m_k(x,V)|x-y|\right)^{\kappa_0/(\kappa_0+1)} }. \label{Inq_2}
    \end{align} Let $y \in  B(x, 2^{-j/2}R)$. Since $x\in \digamma_j$, we obtain $|x-y|m_k(x, V)\leq 2R$.  
    Consequently, \eqref{Inq_1} and \eqref{Inq_2} will become
    \begin{align*}
        m_k(y, V) \leq C_0 2^{(j+1)/2} (1+ 2R)^{\kappa_0}= 2^{\frac{j+1}{2}+\kappa_0\log_2(1+2R)+ \log_2C_0} 
    \end{align*} and 
    \begin{align*}
        m_k(y, V) \geq C_0^{-1} 2^{j/2}(1+2R)^{-\kappa_0/(\kappa_0+1)}=2^{\frac{j}{2}-\frac{\kappa_0}{\kappa_0+1}\log_2(1+2R)-\log_2C_0 },
    \end{align*}respectively. Thus, we have the desired result.
\end{proof}
The following result is one of the consequences of Lemma \ref{Lemma_1} and the Vitali covering lemma. 

\begin{proposition} \label{cover_1}
    For each $j\in \mathbb{Z}$, there exists a collection of points $\{x_{(j,d)}\}_{d=1}^\infty$ such that $x_{(j,d)}\in \digamma_j$ and $\digamma_j \subset \bigcup_d B_{(j,d)}$, where $B_{(j,d)}=B(x_{(j,d)}, 2^{2-j/2})$. Moreover, there exists a constant $C>0$,   such that for all $(j, d)$ and $R \geq 2$ we have 
    \begin{align*}
        \# \left \{ (j^\prime, d^\prime):  B(x_{(j,d)}, R2^{-j/2}) \cap B(x_{(j^\prime, d^\prime)}, R2^{-j^\prime/2}) \neq \emptyset \right \} \leq R^C,
    \end{align*} where $\#$ denotes the cardinality of the set.
\end{proposition}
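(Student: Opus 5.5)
For each fixed $j$, the plan is to pick the points $x_{(j,d)}$ as a maximal separated family, use Euclidean volume packing to bound how many points of one level lie in a ball, and then use Lemma \ref{Lemma_1} to bound how many levels $j'$ can meet a given ball. This mirrors the classical argument of \cite[Lemma 2]{Dziubanski-1999}.

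\emph{Construction.} For fixed $j$, choose a maximal family $\{x_{(j,d)}\}_d \subset \digamma_j$ whose points are pairwise at Euclidean distance at least $2^{1-j/2}$. Equivalently, the balls $B(x_{(j,d)},2^{-j/2})$ are pairwise disjoint. Such a family exists by Zorn's lemma, or by a greedy selection, since $\digamma_j$ is separable. It is countable because these disjoint balls have positive Lebesgue measure. By maximality, every $y\in\digamma_j$ lies within $2^{1-j/2}$ of some $x_{(j,d)}$, so $\digamma_j\subset\bigcup_d B(x_{(j,d)},2^{2-j/2})$. If the family happens to be finite, repeat points so that it is indexed by $d\ge1$; repetition is harmless only if we count distinct balls, which we do.

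\emph{Counting.} Fix $(j,d)$ and $R\ge2$, and suppose $B(x_{(j,d)},R2^{-j/2})$ meets $B(x_{(j',d')},R2^{-j'/2})$.

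\textbf{Step 1: bounding the level gap.} This is the main obstacle, because the two radii depend on different levels and must be compared. Let $z$ be a point in the intersection. Applying \eqref{Inq_1} and \eqref{Inq_2} with base point $x=x_{(j,d)}$, as in the proof of Lemma \ref{Lemma_1}, gives $m_k(z,V)\approx 2^{j/2}$ up to factors $R^{\pm C}$. Doing the same from $x_{(j',d')}$ gives $m_k(z,V)\approx 2^{j'/2}$ up to factors $R^{\pm C}$. Comparing the two yields $|j-j'|\le C\log_2 R + C$.

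As a consequence, $2^{-j'/2}\le R^{C}2^{-j/2}$. Hence $|x_{(j,d)}-x_{(j',d')}|\le R2^{-j/2}+R2^{-j'/2}\le R^{C'}2^{-j/2}$, so the point $x_{(j',d')}$ lies in the ball $B(x_{(j,d)},R^{C'}2^{-j/2})$.

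\textbf{Step 2: counting points at one level.} Fix a level $j'$. The disjoint balls $B(x_{(j',d')},2^{-j'/2})$ have centers in $B(x_{(j,d)},R^{C'}2^{-j/2})$. They are therefore all contained in a ball of radius $R^{C'}2^{-j/2}+2^{-j'/2}\le 2R^{C'}2^{-j/2}$.

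Compare Lebesgue measures; this is cleaner than using $\omega_k$, though \eqref{Volume ratio} would also work. The number of such $d'$ is at most
\[
\left(\frac{2R^{C'}2^{-j/2}}{2^{-j'/2}}\right)^n\le (2R^{C'}R^{C})^n .
\]

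\textbf{Step 3: combining.} Summing the bound of Step 2 over the at most $2C\log_2R+2C+1$ admissible levels $j'$ from Step 1 gives a total of at most $R^{C''}$. Here we use $R\ge2$, which lets us absorb all constants and logarithmic factors into a power of $R$.
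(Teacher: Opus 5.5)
Your proof is correct and follows the route the paper indicates but does not write out: a Vitali-type maximal separated family, then Lemma~\ref{Lemma_1} (in your version, the estimates \eqref{Inq_1}--\eqref{Inq_2} applied from both centers at an intersection point) to bound $|j-j'|$, then a packing count at each level. One cosmetic point: padding a finite family by repetition would inflate the count of index pairs $(j',d')$, which is what the statement counts, so it is cleaner simply to allow a finite index set.
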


The following proposition gives the partition of unity associated with the balls $B_{(j,d)}$. 
\begin{proposition}\label{partition of unity}(cf. \cite{Dziubanski-1999})
    There exists a family of $\mathcal{C}^\infty$ functions $\Psi_{(j,d)}$ such that supp$(\Psi_{(j,d)}) \subset B(x_{(j,d)}, 2^{1-\frac{j}{2}}) \subset B_{(j,d)}$, $0\leq \Psi_{(j,d)}(x) \leq 1$, $\| \nabla \Psi_{(j,d)}\|_{\infty} \leq C 2^{j/2}$ and $\sum\limits_{(j,d)}\Psi_{(j,d)}=1.$ 
\end{proposition}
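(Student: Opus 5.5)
The plan is the standard normalized-bump construction. Fix a single radial cutoff $\phi\in\mathcal{C}_c^\infty(\mathbb{R}^n)$ with $0\le\phi\le1$, $\phi\equiv1$ on $B(0,1)$ and $\operatorname{supp}\phi\subset B(0,2)$. For each pair $(j,d)$ from Proposition \ref{cover_1} set
\[
\phi_{(j,d)}(x)=\phi\bigl(2^{j/2}(x-x_{(j,d)})\bigr).
\]
Then $\phi_{(j,d)}\equiv1$ on $B(x_{(j,d)},2^{-j/2})$, $\operatorname{supp}\phi_{(j,d)}\subset B(x_{(j,d)},2^{1-j/2})$, and $\|\nabla\phi_{(j,d)}\|_\infty\le\|\nabla\phi\|_\infty 2^{j/2}$. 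Put $\Phi=\sum_{(j,d)}\phi_{(j,d)}$ and define $\Psi_{(j,d)}=\phi_{(j,d)}/\Phi$. Supports, $0\le\Psi_{(j,d)}\le1$ and $\sum\Psi_{(j,d)}=1$ are then immediate once $\Phi$ is smooth and $\Phi\ge1$ everywhere.

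The bound $\Phi\ge1$ needs the covering to be done with the smaller radius. Proposition \ref{cover_1} gives coverage by the balls of radius $2^{2-j/2}$, which is too large. So I will rerun the Vitali argument behind Proposition \ref{cover_1}: choose, for each $j$, a maximal family of points $x_{(j,d)}\in\digamma_j$ whose balls $B(x_{(j,d)},2^{-j/2}/2)$ are pairwise disjoint. Maximality gives $\digamma_j\subset\bigcup_d B(x_{(j,d)},2^{-j/2})$, and the counting bound of Proposition \ref{cover_1} survives with the same proof, since it only uses Lemma \ref{Lemma_1}, disjointness and doubling. Because $\mathbb{R}^n=\bigcup_j\digamma_j$, every $x$ lies in some $B(x_{(j,d)},2^{-j/2})$, where $\phi_{(j,d)}=1$; hence $\Phi(x)\ge1$. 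The larger balls $B_{(j,d)}$ of the statement still contain the supports.

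Local finiteness comes from the counting bound with $R=2$. Suppose two supports meet, i.e. $B(x_{(j,d)},2\cdot2^{-j/2})\cap B(x_{(j',d')},2\cdot2^{-j'/2})\ne\emptyset$. Then $(j',d')$ is one of at most $2^C$ indices. Consequently, near any point only boundedly many $\phi_{(j,d)}$ are nonzero, so $\Phi$ is $\mathcal{C}^\infty$ and bounded above by $N=2^C$.

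The gradient estimate is the main obstacle. On $\operatorname{supp}\Psi_{(j,d)}$ we have
\[
\nabla\Psi_{(j,d)}=\frac{\nabla\phi_{(j,d)}}{\Phi}-\frac{\phi_{(j,d)}\,\nabla\Phi}{\Phi^2},
\]
and $|\nabla\Phi(x)|\le\sum_{(j',d')}\|\nabla\phi\|_\infty 2^{j'/2}$, summed over the at most $N$ indices $(j',d')$ whose support meets that of $\phi_{(j,d)}$. The point is to bound each such $2^{j'/2}$ by $C2^{j/2}$. Since the two supports meet, Lemma \ref{Lemma_1} applies with $R=4$ around $x_{(j,d)}$: the point $x_{(j',d')}$ lies in $B(x_{(j,d)},4\cdot 2^{-j/2})$ once we also check $2^{-j'/2}\lesssim2^{-j/2}$. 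Alternatively, use \eqref{Inq_1} and \eqref{Inq_2} directly at a common point $z$ of the two supports. From $|z-x_{(j,d)}|\,m_k(x_{(j,d)},V)\le 2\sqrt2$ we get $m_k(z,V)\sim 2^{j/2}$, and symmetrically $m_k(z,V)\sim2^{j'/2}$. Hence $|j-j'|\le C$, so $|\nabla\Phi|\le C2^{j/2}$ on the support and $\|\nabla\Psi_{(j,d)}\|_\infty\le C2^{j/2}$.
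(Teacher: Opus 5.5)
Your proof is correct and is the standard normalized-bump construction ($\phi_{(j,d)}=\phi(2^{j/2}(\cdot-x_{(j,d)}))$, $\Psi_{(j,d)}=\phi_{(j,d)}/\sum\phi_{(j',d')}$, with $|j-j'|\le C$ for overlapping supports obtained from \eqref{Inq_1}--\eqref{Inq_2} at a common point). This is the construction of \cite{Dziubanski-1999} to which the paper simply defers without giving a proof. You were also right that the covering radius $2^{2-j/2}$ in Proposition \ref{cover_1} is too large to give supports inside $B(x_{(j,d)},2^{1-j/2})$, and re-running the Vitali selection with disjoint balls $B(x_{(j,d)},2^{-1-j/2})$ is the right fix, since those centers still satisfy every conclusion of Proposition \ref{cover_1}.
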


Throughout the paper, we assume $V\in RH_k^q(\mathbb{R}^n)$ with $V\geq 0$ and $q> \max\{1, \frac{n+2\gamma}{2}\}$,  and for each $(j,d) \in \mathbb{Z}\times \mathbb{N}$, $\Psi_{(j,d)}$ be the partition of unity given in Proposition \ref{partition of unity}. We denote by $B(x,r)^*$ the ball centered at $x$ with radius $2r$.
We denote by $C$ and $c$ positive constants, even if their values change at each occurrence.

\section{The space $BMO(\mathcal{L}_k)$}\label{S:3}
The space $BMO$ can be viewed as a natural replacement of $L^\infty$ in harmonic analysis, particularly in endpoint estimates, since it strictly contains $L^\infty$ and enjoys better stability properties under singular integral operators. For a detailed discussion, we refer the reader to \cite{Stein-book-1}. The theory of $BMO$ has been extended in various settings, for example, see \cite{Dafni-2012, Lin}. In the Dunkl framework, the corresponding $BMO$ space is defined by
\begin{align*}
BMO_k
= \left\{ f \in L_{k,\mathrm{loc}}^{1}(\mathbb{R}^n):
\sup_{B} \frac{1}{\omega_k(B)} \int_{B} \lvert f(x) - f_B \rvert \, w_k(x)\, dx < \infty \right\},
\end{align*}
where the supremum is taken over all balls $B \subset \mathbb{R}^n$ and
\[
f_B = \frac{1}{\omega_k(B)} \int_{B} f(x)\, w_k(x)\, dx .
\]
The associated seminorm is given by
\[
\|f\|_{BMO_k}
= \sup_{B} \frac{1}{\omega_k(B)} \int_{B} \lvert f(x) - f_B \rvert \, w_k(x)\, dx .
\]
Since constant functions have zero seminorm, the quotient space of $BMO_k$ modulo constants is a Banach space. Further details on $BMO_k$ spaces can be found in \cite{Jacek-2023}. Several related function spaces of $BMO$ type, such as $CMO$, $VMO$, $BLO$ etc., have been studied in the literature \cite{Almeida, Jacek-2023, Guo, Jiu}. In this section, we introduce the space $BMO(\mathcal{L})$ in the Dunkl setting and present its characterizations together with some fundamental properties.

\begin{definition}
    A function $f \in L_{k,\mathrm{loc}}^{1}(\mathbb{R}^n)$ is said to  $BMO(\mathcal{L}_k)$ function, if there exists a constant $C_f \geq 0$ such that
\[
\frac{1}{\omega_k(B(x,s))} \int_{B(x,s)} \left|f(y)-f_{B(x,s)}\right| w_k(y)\,dy \leq C_f,
\]
and
\[
\frac{1}{\omega_k(B(x,r))} \int_{B(x,r)} |f(y)|\, w_k(y)\,dy \leq C_f,
\]
for every ball $B(x,s)$ and $B(x,r)$ satisfying
\[
s<\rho_k(x)\le r,
\qquad x\in \mathbb{R}^n .
\] The smallest $C_f$ satisfying the above conditions we denote as $\|f\|_{BMO(\mathcal{L}_k)}$. Let $f \in BMO(\mathcal{L}_k)$. Then the following inequality holds:
\begin{equation}\label{bmo-BMO}
    \|f\|_{BMO_k} \leq 2\,\|f\|_{BMO(\mathcal{L}_k)}.
\end{equation}
As an immediate consequence, we obtain the continuous embedding $BMO(\mathcal{L}_k) \hookrightarrow BMO_k,$
that is, $BMO(\mathcal{L}_k) \subset BMO_k$. The inclusion is strict. For $k=0$, we have the classical example $f(x)=\log^+(|x|) \in BMO\setminus BMO(\mathcal{L})$ \cite{Bennett}. 
\end{definition}
\begin{proposition}
    The space $BMO(\mathcal{L}_k)$ is a Banach space endowed with the norm
    $\|\cdot\|_{BMO(\mathcal{L}_k)}$.
\end{proposition}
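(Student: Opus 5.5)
We first check that $\|\cdot\|_{BMO(\mathcal{L}_k)}$ is a genuine norm, and then prove completeness by a Cauchy-sequence argument. This mirrors the classical proof that $BMO$ modulo constants is complete, but here no quotient is needed because of the second (size) condition.

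\textbf{Norm axioms.} Homogeneity is immediate, since both defining quantities scale by $|\lambda|$. For the triangle inequality we use $(f+g)_B=f_B+g_B$ and apply the pointwise triangle inequality inside both averages; this shows $C_f+C_g$ is an admissible constant for $f+g$.

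For definiteness, suppose $\|f\|_{BMO(\mathcal{L}_k)}=0$. Fix any $x\in\mathbb{R}^n$. Since $0<\rho_k(x)<\infty$ by \cite[Lemma 7]{Hejna-2021}, the ball $B(x,r)$ with $r=\rho_k(x)$ is admissible for the second condition. Hence $f=0$ $w_k$-a.e. on $B(x,\rho_k(x))$. These balls cover $\mathbb{R}^n$, and a countable subcover exists because $\mathbb{R}^n$ is Lindel\"of. Therefore $f=0$ almost everywhere; note that $w_k>0$ off a null set of hyperplanes.

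\textbf{Completeness.} Let $\{f_m\}$ be Cauchy in $BMO(\mathcal{L}_k)$.

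First we show $\{f_m\}$ is Cauchy in $L^1_k(K)$ for each compact $K$. Cover $K$ by finitely many balls $B(x_i,\rho_k(x_i))$. On each such ball the second condition gives
\[
\int_{B(x_i,\rho_k(x_i))}|f_m-f_l|\,w_k \le \omega_k\big(B(x_i,\rho_k(x_i))\big)\,\|f_m-f_l\|_{BMO(\mathcal{L}_k)}.
\]
Hence $f_m\to f$ in $L^1_{k,\mathrm{loc}}$ for some $f$, and after a diagonal extraction also a.e. along a subsequence.

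Next, fix $\varepsilon>0$ and $N$ such that $\|f_m-f_l\|_{BMO(\mathcal{L}_k)}<\varepsilon$ for $m,l\ge N$. For any admissible ball $B$, the average $(f_m-f_l)_B$ converges to $(f_m-f)_B$ as $l\to\infty$, by $L^1_k(B)$ convergence. Letting $l\to\infty$ in each defining inequality, using $L^1_k(B)$ convergence again (or Fatou along the subsequence), gives
\[
\frac{1}{\omega_k(B)}\int_B|f_m-f-(f_m-f)_B|\,w_k\le\varepsilon \quad\text{for } s<\rho_k(x),
\]
\[
\frac{1}{\omega_k(B)}\int_B|f_m-f|\,w_k\le\varepsilon \quad\text{for } r\ge\rho_k(x).
\]
Thus $f_m-f\in BMO(\mathcal{L}_k)$ with norm at most $\varepsilon$ for $m\ge N$. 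Consequently $f=f_m-(f_m-f)\in BMO(\mathcal{L}_k)$ and $f_m\to f$ in norm.

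\textbf{Main obstacle.} The only point needing care is obtaining local $L^1_k$ control from the norm, since the first condition alone only controls $f$ modulo constants. This is resolved by the second condition applied to the balls $B(x,\rho_k(x))$, whose finiteness and positivity of radius come from \cite[Lemma 7]{Hejna-2021}.
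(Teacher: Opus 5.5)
Your argument is correct and is the standard one: the paper gives no details beyond calling the proof standard and straightforward. The point you isolate is what makes it work. The size condition on balls $B(x,r)$ with $r\ge\rho_k(x)$, together with $0<\rho_k(x)<\infty$, gives local $L^1_k$ control, and hence definiteness and completeness without passing to a quotient. As a minor simplification, you could replace the Lindel\"of and finite-cover steps with the exhausting family $B(0,R)$, $R\ge\rho_k(0)$, since all of these are admissible large balls.
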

\begin{proof}
    The proof is standard and straightforward. 
\end{proof}
In the following, we discuss the properties of $BMO(\mathcal{L}_k)$ functions. To this end, we require the John–Nirenberg inequality for $BMO_k$ functions. We note that the Dunkl theory induces a metric measure space whose associated measure $\omega_k$ satisfies the doubling condition \eqref{doubling property}. Consequently, many classical results from the theory of  $BMO$ spaces on spaces of homogeneous type remain valid in this setting.

\begin{theorem}\label{John-Nirenberg} 
(John-Nirenberg inequality for $BMO_k$) Let $f\in BMO_k$ and $\lambda>0$. Then there exist positive constants $c$ and $C$ such that 
\begin{align} \label{John-Nirenberg inequality}
    \omega_k\left(\{x\in B: |f(x) -f_B| > \lambda \}\right) \leq C \omega_k(B) e^{-\frac{c\lambda}{\|f\|_{BMO_k}}},
\end{align}
for all balls $B \subset \mathbb{R}^n$.     
\end{theorem}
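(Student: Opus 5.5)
The plan is the classical John–Nirenberg argument via a Calderón–Zygmund stopping-time construction on the doubling metric measure space $(\mathbb{R}^n,|\cdot|,\omega_k)$. Since $\omega_k$ is doubling by \eqref{doubling property}, I will work with a system of dyadic cubes (Christ's construction, or simply Euclidean dyadic cubes, which are adequate because \eqref{Volume ratio} and the doubling property make cubes and balls comparable in $\omega_k$-measure). By homogeneity I normalize $\|f\|_{BMO_k}=1$; the general case then follows by applying the result to $f/\|f\|_{BMO_k}$. It suffices to prove the estimate for a cube $Q$ with $f_Q$ in place of a ball. To pass to balls, take $Q\supset B$ of comparable measure. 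Then $|f_B-f_Q|\le \frac{\omega_k(Q)}{\omega_k(B)}\frac{1}{\omega_k(Q)}\int_Q|f-f_Q|w_k\le C$, so the level set for $B$ at height $\lambda$ is contained in the level set for $Q$ at height $\lambda-C$. The case of small $\lambda$ is absorbed by enlarging the constant $C$. The cube mean oscillation is itself bounded by a constant multiple of $\|f\|_{BMO_k}$, using the ball containing $Q$.

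\textbf{Step 1: Calderón–Zygmund decomposition.} Apply the decomposition to $(f-f_Q)\chi_Q$ at height $\alpha>1$ relative to $\omega_k$, where $\alpha$ will be chosen large. This produces disjoint maximal dyadic subcubes $Q_j\subset Q$ with
\[
\alpha<\frac{1}{\omega_k(Q_j)}\int_{Q_j}|f-f_Q|w_k\le C_{\omega_k}'\alpha .
\]
The constant $C_{\omega_k}'$ comes from the doubling property applied to the dyadic parent. The decomposition also gives $|f-f_Q|\le\alpha$ a.e.\ on $Q\setminus\bigcup_j Q_j$ by the Lebesgue differentiation theorem, which holds for doubling measures. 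Moreover, $\sum_j\omega_k(Q_j)\le \alpha^{-1}\omega_k(Q)$ and $|f_{Q_j}-f_Q|\le C'\alpha$.

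\textbf{Step 2: Iteration.} Define $F(\lambda)=\sup_Q \omega_k(\{x\in Q:|f-f_Q|>\lambda\})/\omega_k(Q)$, which is at most $1$. For $\lambda>C'\alpha$, any point of the level set lies in some $Q_j$ and satisfies $|f-f_{Q_j}|>\lambda-C'\alpha$. Hence
\[
\omega_k(\{x\in Q:|f-f_Q|>\lambda\})\le \sum_j F(\lambda-C'\alpha)\,\omega_k(Q_j)\le \alpha^{-1}F(\lambda-C'\alpha)\,\omega_k(Q),
\]
that is, $F(\lambda)\le \alpha^{-1}F(\lambda-C'\alpha)$. Fixing $\alpha=e$ and iterating $m=\lfloor \lambda/(C'e)\rfloor$ times gives $F(\lambda)\le e^{-m}\le e\,e^{-\lambda/(C'e)}$, which is \eqref{John-Nirenberg inequality} with $c=1/(C'e)$.

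\textbf{Main obstacle.} The delicate point is the dyadic structure adapted to $\omega_k$. One must check that the stopping cubes have averages controlled by the doubling constant, and that the a.e.\ bound off $\bigcup_j Q_j$ holds. Both rest only on \eqref{doubling property} and the Lebesgue differentiation theorem for doubling measures. The same reasoning shows $F(\lambda)<\infty$ trivially, so the iteration is legitimate. Alternatively, one may simply cite the John–Nirenberg theorem for spaces of homogeneous type (Coifman–Weiss), since $(\mathbb{R}^n,|\cdot|,\omega_k)$ is such a space.
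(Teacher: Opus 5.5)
Your argument is correct, and it differs from the paper mainly in being self-contained. The paper proves nothing here: it notes that $(\mathbb{R}^n,|\cdot|,\omega_k)$ is a doubling metric measure space and cites the statement as a special case of the John--Nirenberg lemma for doubling measures in \cite[Theorem 5.2]{Alto-2011}. You instead use the Euclidean structure of the underlying space. Doubling of $\omega_k$ on balls transfers to Euclidean dyadic cubes, since a cube of side $\ell$ contains a ball of radius $\ell/2$ and lies in a ball of radius $\sqrt{n}\,\ell/2$ about the same centre. Dyadic Lebesgue differentiation holds for $\omega_k$, because boundaries of dyadic cubes are Lebesgue-null and hence $\omega_k$-null. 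So the classical Calder\'on--Zygmund stopping-time iteration goes through unchanged. The citation avoids any dyadic machinery and works with balls directly. Your route uses nothing beyond \eqref{doubling property} and makes the dependence of $c$ and $C$ on $n$ and $C_{\omega_k}$ explicit. Your fallback of citing the Coifman--Weiss theory is exactly what the paper does.

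One bookkeeping slip needs fixing. You normalize the ball-based quantity $\|f\|_{BMO_k}=1$, so the mean oscillation over a cube is only bounded by some constant $A\ge 1$ depending on $n$ and $C_{\omega_k}$. Two consequences follow.
\begin{enumerate}[(i)]
\item The stopping height must satisfy $\alpha> A$, not merely $\alpha>1$. Otherwise $Q$ itself may be selected, and the parent argument giving the upper bound $C'\alpha$ breaks down.
\item The packing estimate is $\sum_j\omega_k(Q_j)\le A\alpha^{-1}\omega_k(Q)$, not $\alpha^{-1}\omega_k(Q)$.
\end{enumerate}
Taking $\alpha=eA$ instead of $\alpha=e$ gives $F(\lambda)\le e^{-1}F(\lambda-C'eA)$ for $\lambda\ge C'eA$. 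The rest of your iteration and the transfer from cubes to balls then go through unchanged, with $c=1/(C'eA)$.
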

\begin{proof}
    The result is a particular case of \cite[Theorem 5.2]{Alto-2011}.
\end{proof}
The following are the consequences of John-Nirenberg type inequality. 
\begin{corollary} \label{Corollary J-N}
Let $f \in BMO(\mathcal{L}_k)$. Then there exist positive constants $C$ and $C^*$ such that, for every $p \in [1,\infty)$, we have
\begin{align} \label{Corollary 5.4_1}
\left(\frac{1}{\omega_k(B)} \int_B |f(x)-f_B|^p w_k(x)\,dx \right)^{\frac{1}{p}}
& \leq C \|f\|_{BMO(\mathcal{L}_k)}, \quad \text{for all balls } B,
\end{align}
and
\begin{align} \label{corollary 5.4_2}
\left(\frac{1}{\omega_k(B)} \int_B |f(x)|^p w_k(x)\,dx \right)^{\frac{1}{p}}
& \leq C^* \|f\|_{BMO(\mathcal{L}_k)}, \quad \text{for all balls } B = B(x_0, r),
\end{align}
where $x_0 \in \mathbb{R}^n$ and $r \geq \rho_k(x_0)$. Moreover, the constant $C$ depends only on the doubling constant $C_{w_k}$, $p$, and $\rho_k$.
\end{corollary}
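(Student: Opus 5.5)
The first estimate \eqref{Corollary 5.4_1} will follow from the John--Nirenberg inequality via the layer-cake formula, combined with the embedding \eqref{bmo-BMO}. The second estimate \eqref{corollary 5.4_2} will follow from the first together with the second defining condition of $BMO(\mathcal{L}_k)$.

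\emph{Step 1: the $p$-th mean oscillation.} Fix a ball $B$ and $p\in[1,\infty)$, and write $\lambda_0=\|f\|_{BMO_k}$. If $\lambda_0=0$, then $f$ is a.e.\ constant on $B$ and there is nothing to prove, so assume $\lambda_0>0$. By the distribution function formula and Theorem \ref{John-Nirenberg},
\begin{align*}
\int_B |f-f_B|^p w_k\,dx &= p\int_0^\infty \lambda^{p-1}\,\omega_k(\{x\in B:|f(x)-f_B|>\lambda\})\,d\lambda\\
&\leq C p\,\omega_k(B)\int_0^\infty \lambda^{p-1}e^{-c\lambda/\lambda_0}\,d\lambda = C\,p\,\Gamma(p)\,c^{-p}\lambda_0^{p}\,\omega_k(B).
\end{align*}
Taking $p$-th roots gives
\[
\Big(\tfrac{1}{\omega_k(B)}\int_B|f-f_B|^pw_k\Big)^{1/p}\le C_p\|f\|_{BMO_k},
\]
and \eqref{bmo-BMO} then yields \eqref{Corollary 5.4_1} with constant $2C_p$. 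This constant depends on $p$ and on the constants in Theorem \ref{John-Nirenberg}, which in turn depend only on the doubling constant.

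\emph{Step 2: the full $L^p$ average on large balls.} Let $B=B(x_0,r)$ with $r\ge\rho_k(x_0)$. By Minkowski's inequality,
\[
\Big(\tfrac{1}{\omega_k(B)}\int_B|f|^pw_k\Big)^{1/p}\le \Big(\tfrac{1}{\omega_k(B)}\int_B|f-f_B|^pw_k\Big)^{1/p}+|f_B|.
\]
The first term is at most $C\|f\|_{BMO(\mathcal{L}_k)}$ by Step 1. For the second term,
\[
|f_B|\le \tfrac{1}{\omega_k(B)}\int_B|f|w_k\le\|f\|_{BMO(\mathcal{L}_k)},
\]
directly from the second condition in the definition, since $\rho_k(x_0)\le r$. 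Hence \eqref{corollary 5.4_2} holds with $C^*=C+1$.

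The only real obstacle is ensuring that Theorem \ref{John-Nirenberg} applies with constants independent of $B$, which is exactly its content. No covering argument or dependence on $\rho_k$ is actually needed beyond using the definition on the large ball.
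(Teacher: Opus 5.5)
Your proposal is correct and follows the paper's proof essentially line by line. The first estimate comes from the layer-cake formula with the John--Nirenberg inequality and then the embedding \eqref{bmo-BMO}; the second comes from Minkowski's inequality together with $|f_B|\le |f|_B\le\|f\|_{BMO(\mathcal{L}_k)}$ on balls with $r\ge\rho_k(x_0)$. Your separate treatment of the degenerate case $\|f\|_{BMO_k}=0$ is a small tidy-up that the paper omits.
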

\begin{proof}
    From the layer cake representation of a non-negative measurable function, along with the John-Nirenberg type inequality \eqref{John-Nirenberg inequality}, we have 
    \begin{align*}
        \int_B |f(x)-f_B|^pw_k(x)dx & = p \int_0^\infty t^{p-1} w_k\left( \{x\in B: |f(x)-f_B| > t \} \right)dt \\
        & \leq C \omega_k(B) p \int_0^\infty t^{p-1}e^{-\frac{ct}{\|f\|_{BMO_k}}} dt \\
        & = \frac{C \|f\|_{BMO_k}^p \omega_k(B)\Gamma(p+1)}{c^p}, 
    \end{align*} where $\Gamma(\cdot)$ denotes the gamma function. This establishes \eqref{Corollary 5.4_1} by invoking \eqref{bmo-BMO}
    \begin{align*}
         \left(\frac{1}{\omega_k(B)} \int_B |f(x)-f_B|^pw_k(x)dx \right)^{\frac{1}{p}} & \leq C\|f\|_{BMO_k}, \,\text{  for all balls } B.   
    \end{align*}The inequality \eqref{corollary 5.4_2} can be derived from \eqref{Corollary 5.4_1} by applying the Minkowski's inequality
    \begin{align*}
          \left(\frac{1}{\omega_k(B)} \int_B |f(x)|^p w_k(x)dx  \right)^{\frac{1}{p}} \leq \left(\frac{1}{\omega_k(B)} \int_B |f(x)-f_B|^p  w_k(x)dx \right)^{\frac{1}{p}} + |f_B|.
    \end{align*}
    Note that $|f_B| \leq |f|_B$ and $|f|_B \leq \|f\|_{BMO(\mathcal{L}_k)}$ for all $B=B(x_0,r)$ with $r\geq{\rho_k}(x_0)$.
\end{proof}
We establish a characterization of the space $BMO(\mathcal{L}_k)$ that avoids distinguishing between different types of balls. The proof is obtained by adapting the corresponding characterization of the $bmo$ space on metric measure spaces with doubling measures \cite{Dafni-2012}.
\begin{theorem}
    Let $f\in L_{k,\mathrm{loc}}^{1}(\mathbb{R}^n)$. Then $f\in BMO(\mathcal{L}_k)$  if and only if for every ball $B$ in $\mathbb{R}^n$ there exists a constant $\mathfrak{C}_B$ such that the following conditions are satisfied:
    \begin{enumerate}[$(i)$]
     \item 
    \begin{align*}
        C_p = \sup\limits_{B} \left(  \frac{1}{\omega_k(B)} \int_{B}|f(x)-\mathfrak{C}_B|^pw_k(x)dx \right)^{\frac{1}{p}} <\infty,  
    \end{align*}$\text{ where  } 1\leq p< \infty.$
    \item 
    \begin{align*}
C
= \sup_{\substack{B = B(x,r) \\ x \in \mathbb{R}^n,\; r > 0}}
\,\frac{|\mathfrak{C}_B|}{\log_2\!\left(1 + \frac{{\rho_k}(x)}{r}\right)} < \infty.
\end{align*} Moreover, the $BMO(\mathcal{L}_k)$ is related with the constants $C_p$ and $C$ by following relation
\begin{align*}
    \|f\|_{BMO(\mathcal{L}_k)} \approx \inf\limits_{\mathfrak{C}_B} \{\max\{C_p,C \}\},
\end{align*} where the infimum is evaluated over all  $\mathfrak{C}_B$ satisfying the above two conditions. 
    \end{enumerate}
\end{theorem}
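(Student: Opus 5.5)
We prove the two implications separately, following the $bmo$ characterization of \cite{Dafni-2012} with the constant radius replaced by $\rho_k(x)$.

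\emph{Necessity.} Let $f\in BMO(\mathcal{L}_k)$. For a ball $B=B(x,r)$ we set $\mathfrak{C}_B=f_B$ if $r<\rho_k(x)$ and $\mathfrak{C}_B=0$ if $r\ge\rho_k(x)$.

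Condition $(i)$ follows from Corollary \ref{Corollary J-N}. For small balls we use \eqref{Corollary 5.4_1}. For large balls we use \eqref{corollary 5.4_2}. This gives $C_p\lesssim\|f\|_{BMO(\mathcal{L}_k)}$.

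For $(ii)$, only balls with $r<\rho_k(x)$ matter, since otherwise $\mathfrak{C}_B=0$. Let $N\ge0$ be the integer with $2^N r<\rho_k(x)\le 2^{N+1}r$, and set $B_i=B(x,2^ir)$ for $0\le i\le N+1$.

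\begin{enumerate}[(a)]
\item By doubling \eqref{doubling property} and the BMO condition on the small ball $B_{i+1}$, for $i+1\le N$ we have
\[
|f_{B_i}-f_{B_{i+1}}|\le \frac{1}{\omega_k(B_i)}\int_{B_{i}}|f-f_{B_{i+1}}|\,w_k\le C_{\omega_k}\|f\|_{BMO(\mathcal{L}_k)}.
\]
\item The last step uses the large ball $B_{N+1}$. By the second defining condition and doubling, $|f_{B_N}|\le C|f|_{B_{N+1}}\le C\|f\|_{BMO(\mathcal{L}_k)}$.
\end{enumerate}

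Summing the telescoping steps gives $|f_B|\lesssim (N+1)\|f\|_{BMO(\mathcal{L}_k)}$. Since $1+\rho_k(x)/r\ge 1+2^N$, we have $N+1\lesssim\log_2(1+\rho_k(x)/r)$. Hence $C\lesssim\|f\|_{BMO(\mathcal{L}_k)}$.

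\emph{Sufficiency.} Assume constants $\mathfrak{C}_B$ satisfy $(i)$ and $(ii)$. By Hölder's inequality it suffices to treat $p=1$.

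\begin{enumerate}[(a)]
\item For any ball,
\[
\frac{1}{\omega_k(B)}\int_B|f-f_B|\,w_k\le \frac{2}{\omega_k(B)}\int_B|f-\mathfrak{C}_B|\,w_k\le 2C_p .
\]
This gives the oscillation condition on small balls.
\item For $B=B(x,r)$ with $r\ge\rho_k(x)$, we have $\log_2(1+\rho_k(x)/r)\le 1$. Therefore
\[
|f|_B\le \frac{1}{\omega_k(B)}\int_B|f-\mathfrak{C}_B|\,w_k+|\mathfrak{C}_B|\le C_p+C.
\]
\end{enumerate}

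Thus $\|f\|_{BMO(\mathcal{L}_k)}\lesssim\max\{C_p,C\}$. Taking the infimum over admissible $\mathfrak{C}_B$, and combining with the necessity part, gives the stated equivalence of norms.

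The main obstacle is the telescoping estimate in the necessity part. The subtle point is the transition across the critical radius: the chain of balls must stay concentric at $x$, so that the defining condition, which is stated in terms of $\rho_k$ at the center, applies. This is why we use balls centered at $x$ rather than Lemma \ref{Lemma_1}. The counting of steps must then produce exactly the logarithm $\log_2(1+\rho_k(x)/r)$. For $p>1$, the dependence of the constant on $p$ comes from Corollary \ref{Corollary J-N}.
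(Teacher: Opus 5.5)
Your proposal is correct and follows essentially the same route as the paper: the same choice $\mathfrak{C}_B=f_B$ or $0$, the same telescoping along concentric balls $B(x,2^i r)$ up to the critical radius for $(ii)$, and the same two estimates (oscillation bounded by $2C_1$, and $|f|_B\le C_1+C$ on large balls) for sufficiency. The only cosmetic difference is that you close the chain with the average over the first large ball $B_{N+1}$, whereas the paper also bounds the oscillation on that large ball. This means your argument uses only the two defining conditions, while the paper implicitly relies on $\|f\|_{BMO_k}\le 2\|f\|_{BMO(\mathcal{L}_k)}$.
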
 

\begin{proof}
    We begin the proof by assuming that $f\in BMO(\mathcal{L}_k)$. It is standard to define the constant $\mathfrak{C}_B$, associated with a ball $B = B(x,r)$, $x \in \mathbb{R}^n$, by
\[
\mathfrak{C}_B =
\begin{cases}
f_B, & r < \rho_k(x),\\
0, & r \geq \rho_k(x).
\end{cases}
\] 
Then Corollary \ref{Corollary J-N} and the fact that $f$ is a $BMO(\mathcal{L}_k)$ function imply condition $(i)$ for each $p\in [1,\infty)$. Also, the choice of $\mathfrak{C}_B$ for larger balls reduces the proof of the condition $(ii)$. It is enough to show that 
\begin{align*}
    \frac{|f_B|}{\log_2 \left(1+ \frac{{\rho_k}(x)}{r} \right)} \leq C\|f\|_{BMO(\mathcal{L}_k)} \, \text{ for } r \leq {\rho_k}(x),
\end{align*} where the constant $C$ is independent of $r$.
We choose $m$ as the smallest positive integer such that, $2^{m}r \geq {\rho_k}(x)$ and consider the collection of balls $B_j=B(x, 2^jr)$ for $j=0,1,\cdots, m$. Using Corollary \ref{Corollary J-N} and \eqref{doubling property}, we deduce that
\begin{align*}
    |f_B|  & \leq \sum_{j=0}^{m-1} |f_{B_j}-f_{B_{j+1}}|+|f_{B_m}|\\
    & \leq \sum_{j=0}^{m-1} \frac{1}{\omega_k(B_j)} \int_{B_j}|f(y)-f_{B_{j+1}}|w_k(y)dy + \frac{1}{\omega_k(B_m)} \int_{B_m}|f(y)|w_k(y)dx\\
    & \leq \sum_{j=0}^{m-1}  \frac{C_{w_k}}{\omega_k(B_{j+1})} \int_{B_{j+1}}|f(y)-f_{B_{j+1}}|w_k(y)dy + \frac{1}{\omega_k(B_m)} \int_{B_m}|f(y)|w_k(y)dx\\ 
    & \leq \left( mC_{w_k}+1 \right)\|f\|_{BMO(\mathcal{L}_k)}.
\end{align*} 
Since $\frac{\rho_k(x)}{r} \geq 1$, we can find a constant $C>0$, independent of the radius, such that
\[
m \leq \log_2\left( \frac{\rho_k(x)}{r} \right) + 1 \leq C \log_2\left( \frac{\rho_k(x)}{r} + 1 \right).
\]
Thus, we obtain the desired inequality.\\

Now we consider the converse part, assuming that  $f \in L_{k,\text{loc}}^1(\mathbb{R}^n)$ and satisfying conditions $(i)$ and $(ii)$.  Then by condition $(i)$ with $p=1$, there exists a constant $\mathfrak{C}_{B_0}$ for the ball $B_0=B(x_0,r_0)$ such that
\begin{align}\label{Esti-1}
    \frac{1}{\omega_k(B_0)} \int_{B_0} |f(x)-f_{B_0}| w_k(x)dx \leq  \frac{2}{\omega_k(B_0)} \int_{B_0} |f(x)-\mathfrak{C}_{B_0}| w_k(x)dx \leq 2C_1.
\end{align}
In particular, for $r\geq{\rho_k}(x_0)$ we have
\begin{align}\label{Esti-2}
    \frac{1}{\omega_k(B_0)} \int_{B_0} |f(x)| w_k(x)dx \leq   \frac{1}{\omega_k(B_0)} \int_{B_0} |f(x)-\mathfrak{C}_{B_0}| w_k(x)dx \, +\,  |\mathfrak{C}_{B_0}| \leq C_1+ C.
\end{align}
The estimates \eqref{Esti-1} and \eqref{Esti-2} show that the $BMO(\mathcal{L}_k)$ norm of $f$ is the $\max\{2C_1, C_1+ C \}$, which is nothing but the constant multiple of $\max\{C_1, C \}$. 
\end{proof}
\begin{remark}
The local $bmo$ space is first defined in the Euclidean setting \cite{Goldberg} and later extended to more general settings, such as metric measure spaces with a doubling measure \cite{Dafni-2012}. Similarly, we define the $bmo$ space in the Dunkl setting using the doubling measure $\omega_k$. Fix a radius $R > 0$. The space $bmo$ is the collection of all $f\in L_{k,\text{loc}}^1(\mathbb{R}^n)$ such that
\[
\|f\|_{bmo} = \sup_{B}
\frac{1}{\omega_k(B)}
\int_B |f(x) - \mathfrak{C}_B|\, w_k(x)\, dx
< \infty. 
\]
where the supremum is taken over all balls $B=B(x,r) \subset\mathbb{R}^n$, and
$
\mathfrak{C}_B =
\begin{cases}
f_B, & \text{if } r < R, \\
0,   & \text{if } r \ge R.
\end{cases}$
Here $r$ is the radius of the ball. The space $bmo$ can be viewed as a particular case of the space $BMO(\mathcal{L}_k)$
corresponding to a constant potential function $V$. Indeed, let $V=C$ 
for some constant $C>0$. Then the critical radius associated with $V$ is
given by 
\[
\rho_k(x)=\frac{1}{\sqrt{C}}, \qquad \text{for every } x\in\mathbb{R}^n.
\]
Now, if $V\in L_k^1(\mathbb{R}^n)$, then $\rho_k(x)=\infty$ for all
$x\in\mathbb{R}^n$. In this case, the localization disappears and
$BMO(\mathcal{L}_k)$ coincides with $BMO_k$.
\end{remark}

\section{$BMO(\mathcal{L}_k)$ as the dual space of Hardy space} \label{S:4}
In this section, we identify the space $BMO(\mathcal{L}_k)$ as the dual of the Hardy space associated with the operator $\Tilde{\mathcal{L}_k}$, whose potential function satisfies the reverse H\"older inequality. In \cite{Hejna-2021}, the Hardy space $H_{\Tilde{\mathcal{L}}_k}^1$ is defined by
\begin{align*}
    H_{\Tilde{\mathcal{L}}_k}^1 = \left\{ f \in L_k^1(\mathbb{R}^n) : f^*(x) = \sup_{t>0} |\mathbf{K}_t f(x)| \in L_k^1(\mathbb{R}^n) \right\},
\end{align*}
with norm $\|f\|_{H_{\Tilde{\mathcal{L}}_k}^1} = \|f^*\|_{L_k^1(\mathbb{R}^n)}$. Here $\mathbf{K}_t$ is the integral transform associated with the Dunkl-Schr\"odinger operator \eqref{Integral operator}. An atomic characterization of $H_{\Tilde{\mathcal{L}}_k}^1$ was also obtained in terms of atoms satisfying appropriate size and cancellation conditions, analogous to those in the classical Hardy space theory associated with Schr\"odinger operators \cite{Jacek-2004}. Moreover, the cancellation conditions imposed on the atoms are determined by a suitable collection of cubes in $\mathbb{R}^n$. Further results related to the operator $\Tilde{\mathcal{L}}_k$ can be found in \cite{Jacek-2025, Hejna-2021, Hejna-2021_2}. 
\par The available atomic decomposition for $H_{\Tilde{\mathcal{L}}_k}^1$ is formulated in terms of cubes. To establish the duality, we require a characterization of $H_{\Tilde{\mathcal{L}}_k}^1$ in terms of atoms whose cancellation depends on the critical radius (cf.\ \cite{Dziubanski-2005}). An analogous characterization is available for the Hardy space associated with the Dunkl harmonic oscillator \cite{Hejna-2020}. In this section, we also derive an atomic decomposition for $H_{\Tilde{\mathcal{L}}_k}^1$ under the following assumption on the potential function $V$: \\ 
\begin{enumerate}[$(i)$]
    \item For all $N>0$, there exist positive constants $c $ and $ C $ such that for all $x,y\in \mathbb{R}^n$ and $t>0$ 
\begin{align} 
    k_t(x,y) \leq C {\left( 1+ \mathcal{E}_k(x,y)\mathcal{}   \right) ^{-N} } \exp\left(-\frac{d_\mathcal{O}(x,y)^2}{ct} \right) \left(\omega_k(B(x,\sqrt{t}))\right)^{-1}
    , \label{assumption-1} 
\end{align} where
\begin{align*} 
\mathcal{E}_k(x,y)=\frac{t}{\omega_k(B(x,\sqrt{t}))} \int_{B(x,\sqrt{t})} V(\xi)w_k(\xi)d\xi + \frac{t}{\omega_k(B(y,\sqrt{t}))} \int_{B(y,\sqrt{t})} V(\xi)w_k(\xi)d\xi.
\end{align*} 

\end{enumerate}
The proof techniques for the atomic decomposition are inspired by \cite{Dziubanski-1999} and \cite{Jacek-2004}. In \cite{Dziubanski-1999}, the arguments rely on the fundamental solutions of Schr\"odinger operators. However, such fundamental solutions are not available for the Dunkl-Schr\"odinger operators. To overcome this difficulty, we incorporate suitable modifications of the methods developed in \cite{Jacek-2004}. 

\subsection{Atomic decomposition:} We recall the definition of atoms associated with the balls and the atomic Hardy space in the following definition. 
\begin{definition} \label{atom-def}
     Let $a$ be a measurable function on $\mathbb{R}^n$. Then $a$ is called an $H_{\Tilde{\mathcal{L}}_k}^1$-atom associated with the ball $B(x_0,r_0)$ if
     \begin{enumerate}[$(i)$]
         \item supp($a) \subset B(x_0,r_0)$,\\
         \item $\|a\|_{L^\infty(\mathbb{R}^n)} \leq \frac{1}{\omega_k(B(x_0,r_0))},$\\
         \item If $x_0 \in \digamma_j$ and  
$0 < r_0 < 2^{-1-\frac{j}{2}}$, then $a$ satisfies the cancellation condition $\int_{\mathbb{R}^n} a(x) w_k(x)\, dx = 0$. 
     \end{enumerate}
      \end{definition}
$H_{\Tilde{\mathcal{L}}_k}^{1,\text{atm}}$ is the space of functions $f\in L_k^1(\mathbb{R}^n)$ which admits a representation of the form 
\begin{align*}
    f=\sum_{m=1}^\infty c_ma_m,
\end{align*} where $c_m \in \mathbb{C}$ and $a_m$ are atoms associated with $H^1_{\Tilde{\mathcal{L}}_k}$ such that $\sum_{m=1}^\infty|c_m|< \infty$. The space $H_{\Tilde{\mathcal{L}}_k}^{1,\text{atm}}$ is a Banach space with the norm 
\begin{align*}
    \|f\|_{H_{\Tilde{\mathcal{L}}_k}^{1,\text{atm}}} = \inf \left\{ \sum\limits_{m=1}^\infty |c_j| : f= \sum\limits_{m=1}^\infty c_ja_j \text{ where, }c_j\in \mathbb{C} \text{ and } 
    a_j \text{ are } H_{\Tilde{\mathcal{L}}_k}^{1} \text{-atoms} \right \}.
\end{align*}
      
The following theorem explains the atomic decomposition of $H_{\Tilde{\mathcal{L}}_k}^1$ functions.

\begin{theorem} \label{atomic decomposition} Assume that $V$ satisfies \eqref{assumption-1}. Then there exists a constant $C>0$ such that, for every $f\in L_k^1(\mathbb{R}^n)$, we have
\begin{align*}
C^{-1}\|f\|_{H_{\Tilde{\mathcal{L}}_k}^{1,\text{atm}}} \leq \|f\|_{H_{\Tilde{\mathcal{L}}_k}^1} \leq C\|f\|_{H_{\Tilde{\mathcal{L}}_k}^{1,\text{atm}}}. 
\end{align*}
\end{theorem}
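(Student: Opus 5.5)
We prove the two inequalities separately, following the scheme of \cite{Dziubanski-1999} and \cite{Jacek-2004} and working locally on the balls $B_{(j,d)}$ of Proposition \ref{cover_1} with the partition of unity $\Psi_{(j,d)}$ of Proposition \ref{partition of unity}.

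\emph{Atoms are in $H^1_{\Tilde{\mathcal{L}}_k}$.} It suffices to show that $\|a^*\|_{L^1_k}\le C$ for every atom $a$ supported in $B=B(x_0,r_0)$ with $x_0\in\digamma_j$.

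On $4B$ we use $0\le k_t\le h_t$ and $\int h_t(x,y)w_k(y)\,dy=1$. This gives $a^*\le \|a\|_\infty$ there, so the contribution is bounded by doubling. Off $4B$ we work on the orbit region: \eqref{heat kernel esti_2} controls the set where $d_{\mathcal O}(x,x_0)$ is small, since $G$ is finite and the factor $(1+|x-y|/\sqrt t)^{-2}$ gives integrable decay.

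\emph{Case 1: $r_0\ge 2^{-1-j/2}$ (no cancellation).} Then $r_0\gtrsim\rho_k(x_0)$. By Lemma \ref{Lemma_1} and \eqref{Inq_1}, \eqref{Inq_2}, the quantity $\mathcal E_k(x,y)$ is comparable to a positive power of $\sqrt t\,m_k(x_0,V)$ once $\sqrt t\gtrsim\rho_k(x_0)$. This step uses the Hejna estimate $\frac{r^2}{\omega_k(B(x,r))}\int_B V\,w_k\ge c(r\,m_k)^{\delta}$ for $r\ge\rho_k$, which comes from the reverse H\"older condition with $q>\frac{n+2\gamma}{2}$. Assumption \eqref{assumption-1}, applied with $N$ large, then gives decay $(\sqrt t/\rho_k(x_0))^{-N\delta}$ for large $t$. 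For small $t$ the Gaussian factor $\exp(-d_{\mathcal O}^2/ct)$ is small. Summing dyadically in $d_{\mathcal O}(x,x_0)\sim 2^\ell r_0$ then yields $\int_{(4B)^c}a^*w_k\le C$.

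\emph{Case 2: $r_0<2^{-1-j/2}$ (with cancellation).} We write
\[
\mathbf K_t a(x)=\int \bigl(k_t(x,y)-k_t(x,x_0)\bigr)a(y)w_k(y)\,dy.
\]
We compare $k_t$ with $h_t$ through the perturbation formula
\[
h_t-k_t=\int_0^t \mathbf H_{t-s}V\mathbf K_s\,ds .
\]
The Dunkl heat kernel is H\"older in $y$ by \cite{Hejna-2019}. The perturbation term is small when $t\le\rho_k^2$, because $\int_{B(x,\sqrt t)}V\,w_k$ is small there by \eqref{RVC}. This gives H\"older regularity of $k_t$ at scale $\sqrt t\le\rho_k(x_0)$. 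For $\sqrt t\ge\rho_k(x_0)$ we again use the $\mathcal E_k$-decay from \eqref{assumption-1}. The resulting bound on $\int a^* w_k$ carries a logarithmic factor in $\rho_k(x_0)/r_0$ only if the estimates are done carelessly; the H\"older gain $(r_0/\sqrt t)^\alpha$ is what removes it.

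\emph{$H^1_{\Tilde{\mathcal{L}}_k}\subset H^{1,\mathrm{atm}}$.} Given $f\in H^1_{\Tilde{\mathcal{L}}_k}$, we write $f=\sum_{(j,d)}\Psi_{(j,d)}f$. The aim is to show
\[
\sum_{(j,d)}\bigl\|(\Psi_{(j,d)}f)\bigr\|_{H^1_{\mathrm{loc},\,2^{-j/2}}}\le C\|f^*\|_{L^1_k},
\]
where $H^1_{\mathrm{loc},\,2^{-j/2}}$ is the Dunkl local Hardy space defined by the maximal function $\sup_{t\le 2^{-j}}|H_t g|$. By the local atomic decomposition in spaces of homogeneous type with doubling measure $\omega_k$ (as in \cite{Hejna-2020}), such functions decompose into atoms of radius below $2^{-j/2}$ with cancellation, plus atoms of radius about $2^{-j/2}$ without cancellation. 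These match Definition \ref{atom-def}: after enlarging balls by a fixed factor, condition (iii) is consistent by Lemma \ref{Lemma_1}.

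To bound the local maximal norm we estimate three pieces:
\begin{enumerate}[$(a)$]
\item $\sup_{t\le 2^{-j}}|\mathbf K_t(\Psi f)-\Psi\,\mathbf K_t f|$, which is controlled by $\|\nabla\Psi_{(j,d)}\|_\infty\le C2^{j/2}$ together with Gaussian bounds;
\item $\sup_{t\le 2^{-j}}|(H_t-\mathbf K_t)(\Psi f)|$, which is controlled by the perturbation formula and $\int_{B_{(j,d)}^*}V\,w_k\lesssim 2^{j}\omega_k(B_{(j,d)}^*)$;
\item the maximal function over large $t$, which is controlled by \eqref{assumption-1}.
\end{enumerate}
Each piece is bounded by $C\int_{B^*_{(j,d)}}(|f|+f^*)w_k$. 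The finite overlap in Proposition \ref{cover_1} then sums these bounds, giving $\|f\|_{\mathrm{atm}}\le C\|f^*\|_{L^1_k}$.

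The main obstacle is piece $(b)$, the perturbation estimate $\|\sup_{t\le\rho^2}|(H_t-\mathbf K_t)g|\|_{L^1_k}\lesssim\|g\|_{L^1_k}$ for $g$ supported in $B_{(j,d)}$. In the Dunkl setting we lack fundamental solutions, and $h_t$ has only orbit-distance Gaussian decay, so the time integral $\int_0^t \mathbf H_{t-s}V\mathbf K_s\,ds$ must be handled carefully. The plan is to bound it by $\int_0^{t}\!\int h_{t-s}(x,z)V(z)h_s(z,y)\,w_k(z)\,dz\,ds$, and then to use $q>\frac{n+2\gamma}{2}$ through H\"older's inequality and \eqref{Volume ratio} to gain a factor $(\sqrt t/\rho_k)^{\delta}$.
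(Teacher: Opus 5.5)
\paragraph{Main gap: the perturbation estimate, piece $(b)$.}
Your plan for $H^1_{\Tilde{\mathcal{L}}_k}\subset H^{1,\mathrm{atm}}_{\Tilde{\mathcal{L}}_k}$ follows the paper's route: partition of unity, local Dunkl Hardy spaces at scale $2^{-j/2}$, commutator and perturbation pieces, finite overlap. But piece $(b)$, which you call the main obstacle, fails as you plan it. You replace $\mathbf K_s$ by $H_s$, dominate by $\int_0^t\!\int h_{t-s}(x,z)V(z)h_s(z,y)\,w_k(z)\,dz\,ds$, and gain $(\sqrt t/\rho_k)^{\delta}$ from \eqref{RVC} and H\"older. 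That only controls $z$ near $B_{(j,d)}$.

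The Dunkl heat kernel is not small for $z$ near a reflected point $\sigma(y)$. There $d_{\mathcal O}(z,y)$ is small, and the only damping is the factor $(1+|z-y|/\sqrt s)^{-2}\approx s/|y-\sigma(y)|^2$ in \eqref{heat kernel esti_2}, which is sharp already in rank one. Near $\sigma(y)$ the size of $V$ is governed by $\rho_k(\sigma(y))$, which is unrelated to $\rho_k(y)$ because $V$ need not be $G$-invariant.

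Here is an example. Take $n=2$, $\mathcal R=\{\pm\sqrt2e_1\}$, and $V(x)=|x_1-x_2|^M$ with $M>2$; this $V$ lies in $RH^q_k$ for every $q$. Take $y=(T,T)$. Then $\rho_k(y)\approx1$, while $V\approx T^M$ near $\sigma(y)=(-T,T)$. Since $\int h_{t-s}(x,z)w_k(x)\,dx=1$, your majorant at $t\approx\rho_k(y)^2$ has $L^1_k$ norm at least
\[
\int_0^{c}\!\int V(z)h_s(z,y)w_k(z)\,dz\,ds\gtrsim T^{M-2},
\]
so no uniform bound is possible.

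\paragraph{The fix used in the paper.}
The missing idea, used in Lemma~\ref{Lamma B2}, is to split $V=V\chi_{B^*_{(j,d)}}+V\chi_{\mathbb{R}^n\setminus B^*_{(j,d)}}$.
\begin{itemize}
\item The local part is handled by your reverse H\"older argument; this is \eqref{assumption-2}.
\item For the far part you must keep $\mathbf K_s$ and use the conservation bound
\[
\int_0^\infty\!\!\int_{\mathbb{R}^n}V(z)\,\mathbf K_s(|g|)(z)\,w_k(z)\,dz\,ds\le\|g\|_{L^1_k(\mathbb{R}^n)},
\]
which comes from integrating the Duhamel formula in $x$ (cf.\ \cite[Lemma 16]{Hejna-2021}). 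Combine it with the damping $(t-s)/|x-z|^2\le 2^{j}(t-s)$ for $x\in B_{(j,d)}$, $z\notin B^*_{(j,d)}$.
\item For $x\notin B^*_{(j,d)}$, use Lemma~\ref{lemma-4.1}.
\end{itemize}
The same reflected-region effect means piece $(a)$ is not controlled by $\|\nabla\Psi_{(j,d)}\|_\infty$ and Gaussian bounds alone. Nor is it localized to $\int_{B^*_{(j,d)}}(|f|+f^*)w_k$. It needs the $\mathcal I_{(j,d)}/\mathcal J_{(j,d)}$ split and the re-summation of Lemma~\ref{Lemma B3}.

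\paragraph{Atoms with cancellation.}
Your step ``$h_t-k_t$ is small for $t\le\rho_k^2$, hence $k_t$ is H\"older'' does not follow. A size bound on $h_t-k_t$ gives no regularity of $k_t$, and the pointwise smallness itself fails near reflected points. You do not need regularity of $k_t$ at all. The paper uses
\[
\sup_{t\le2^{-j}}|\mathbf K_ta|\le\sup_{t\le2^{-j}}|H_ta|+\sup_{t\le2^{-j}}|(H_t-\mathbf K_t)a|.
\]
\begin{itemize}
\item The first term is in $L^1_k$ because $a$ is a local Dunkl atom at scale $2^{-j/2}$. This is where cancellation and the H\"older regularity of $h_t$ enter.
\item The second term is at most $C\|a\|_{L^1_k}\le C$ by Lemma~\ref{Lamma B2}. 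No gain is needed, so no logarithm arises.
\item Large $t$ is handled by the semigroup property and the $\rho_k$-decay of $k_t$ coming from \eqref{assumption-1}, much as in your Case~1.
\end{itemize}
With the $V$-splitting and this decomposition in place, your outline becomes the paper's argument.
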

We prove the theorem by showing the containment $H_{\Tilde{\mathcal{L}}_k}^{1,\text{atm}} \subset H_{\Tilde{\mathcal{L}}_k}^1$ and $ H_{\Tilde{\mathcal{L}}_k}^1 \subset  H_{\Tilde{\mathcal{L}}_k}^{1,\text{atm}}$. The proof is based on the local Hardy space \cite{Dziubanski-1999, Jacek-2004}. For more details on the local Hardy space, readers can refer to \cite{Goldberg} for the Euclidean setting and \cite{Hejna-2020} for the Dunkl setting.
In order to prove the results, we need some lemmas, which are stated below. 

 \begin{lemma}\label{k-kernel-estimate}
     For each $N>0$, there exist constants $c,C>0$ such that 
     \begin{align}\label{k-kernel-estimate}
         k_t(x,y) \leq \frac{C}{\omega_k(B(x,\sqrt{t}))} \exp\left(-\frac{d_\mathcal{O}(x,y)^2}{ct} \right) \left( 1+ \frac{\sqrt{t}}{\rho_k(x)}+ \frac{\sqrt{t}}{\rho_k(y)} \right)^{-N} 
     \end{align} for all $x,y\in \mathbb{R}^n$ and $t>0$.
 \end{lemma}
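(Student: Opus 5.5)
The plan is to derive the lemma from assumption \eqref{assumption-1} by bounding $\mathcal{E}_k(x,y)$ from below in terms of $\sqrt{t}/\rho_k(x)$ and $\sqrt{t}/\rho_k(y)$. Since $\mathcal{E}_k(x,y)=\Phi(x,\sqrt t)+\Phi(y,\sqrt t)$, where
\[
\Phi(z,r)=\frac{r^2}{\omega_k(B(z,r))}\int_{B(z,r)}V(\xi)w_k(\xi)\,d\xi ,
\]
it suffices to prove the following: there are $C,\delta>0$ such that for all $z$ and $r$,
\[
1+\Phi(z,r)\ge c\left(1+\frac{r}{\rho_k(z)}\right)^{\delta}.
\]
Once this holds, we get $(1+\mathcal{E}_k)^{-N}\le C(1+\sqrt t/\rho_k(x))^{-N\delta/2}(1+\sqrt t/\rho_k(y))^{-N\delta/2}$. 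This is in turn at most $C(1+\sqrt t/\rho_k(x)+\sqrt t/\rho_k(y))^{-N\delta/2}$, using $(1+a)(1+b)\ge 1+a+b$. Since $N$ in \eqref{assumption-1} is arbitrary, we rename $N\delta/2$ as $N$. The remaining factors $\exp(-d_\mathcal{O}^2/ct)\,\omega_k(B(x,\sqrt t))^{-1}$ are already in the required form.

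For the lower bound on $\Phi$, the case $r\le\rho_k(z)$ is trivial because the right side is bounded. For $r>\rho_k(z)$, we first note that $\Phi(z,\rho_k(z))=1$. This follows from continuity of $r\mapsto\Phi(z,r)$ and the definition of $\rho_k$ as a supremum, together with $0<\rho_k<\infty$ (Lemma 7 of Hejna, quoted earlier). Next we need a growth estimate of the form
\[
\Phi(z,R)\ge c\left(\frac{R}{r}\right)^{\delta}\Phi(z,r),\qquad 0<r<R.
\]
This is the Dunkl analogue of Shen's lemma and rests on the reverse Hölder condition \eqref{RVC} with $q>\max\{1,\frac{n+2\gamma}{2}\}$. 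To obtain it, I would write
\[
\int_{B(z,r)}V\,w_k\le \omega_k(B(z,r))^{1-1/q}\Bigl(\int_{B(z,R)}V^q w_k\Bigr)^{1/q}
\]
by Hölder's inequality. Applying \eqref{RVC} on $B(z,R)$ then gives
\[
\int_{B(z,r)}V\,w_k\le C\left(\frac{\omega_k(B(z,r))}{\omega_k(B(z,R))}\right)^{1-1/q}\int_{B(z,R)}V\,w_k .
\]
Multiplying by $r^2/\omega_k(B(z,r))$ and using the volume-ratio bound \eqref{Volume ratio} yields
\[
\Phi(z,r)\le C\left(\frac{r}{R}\right)^{2-(n+2\gamma)/q}\Phi(z,R).
\]
The exponent $\delta=2-(n+2\gamma)/q$ is positive exactly because $q>\frac{n+2\gamma}{2}$. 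Taking $r=\rho_k(z)$ and $R=\sqrt t$ gives $\Phi(z,\sqrt t)\ge c(\sqrt t/\rho_k(z))^{\delta}$, which finishes the argument.

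The main obstacle is the use of \eqref{Volume ratio}. The sign needed is $\omega_k(B(z,r))/\omega_k(B(z,R))\le C(r/R)^{\alpha}$ with $\alpha$ the \emph{smaller} growth exponent, and the lower bound $\omega_k(B(x,r))\ge c r^n\prod(\dots)$ actually gives $(r/R)^{n}$ there. I would check which exponent the Hölder step really needs. The upper ratio bound comes with exponent $n$, and this gives the weaker but still positive $\delta=2-(n+2\gamma)/q$ after taking the worst case. Either way a positive $\delta$ survives, and only its positivity matters since $N$ is arbitrary. The other point needing care is the continuity of $\Phi$ in $r$ that yields $\Phi(z,\rho_k(z))=1$. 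For this, an inequality $\Phi(z,\rho_k(z))\ge c$ suffices, and it can be obtained by comparing with slightly larger radii via doubling.
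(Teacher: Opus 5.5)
Your proposal is correct and is essentially the paper's argument. You bound $\mathcal{E}_k(x,y)$ from below by $c(\sqrt t/\rho_k)^{2-(n+2\gamma)/q}$ via the Shen-type growth estimate and the normalization at $r=\rho_k$, then insert this into \eqref{assumption-1} using that $N$ is arbitrary. The differences are minor: the paper cites Lemma 7 of Hejna for the growth estimate, whereas you rederive it from H\"older and \eqref{RVC}, and you handle both the $x$- and $y$-terms of $\mathcal{E}_k$ directly instead of reducing to $x$ by symmetry.

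Your closing worry is unfounded and has the direction backwards. The H\"older step needs only the lower bound $\omega_k(B(z,r))/\omega_k(B(z,R))\ge C^{-1}(r/R)^{n+2\gamma}$, which is the valid half of \eqref{Volume ratio} and gives exactly $\delta=2-(n+2\gamma)/q$. The exponent-$n$ upper bound plays no role.
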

\begin{proof}
   Invoking the symmetry of the kernel $k_t$, \eqref{kernel estimate_1} and, \eqref{heat kernel esti_2}, it is enough to prove 
    \begin{align*}
        k_t(x,y) \leq \frac{C}{\omega_k(B(x,\sqrt{t}))} \exp\left(-\frac{d_\mathcal{O}(x,y)^2}{ct} \right) \left( 1+ \frac{\sqrt{t}}{\rho_k(x)} \right)^{-N}, 
    \end{align*} for $\sqrt{t} \leq \rho_k(x)$. Using  \cite[Lemma 7]{Hejna-2021}, we have 
    \begin{align*}
        1 &= \frac{\rho_k(x)^2}{\omega_k(B(x,\sqrt{t}))}\int_{B(x,\rho_k(x))} V(\xi)w_k(\xi)d\xi\\
       & \leq C \left(\frac{\rho_k(x)}{\sqrt{t}}\right)^{2-\frac{n+2\gamma}{q}} \frac{t}{\omega_k(B(x,\sqrt{t}))}\int_{B(x,\sqrt{t})} V(\xi)w_k(\xi)d\xi.
    \end{align*} Thus, we get 
    \begin{align} \label{V-estimate}
      \frac{t}{\omega_k(B(x,\sqrt{t}))}\int_{B(x,\sqrt{t})} V(\xi)w_k(\xi)d\xi \geq   \frac{1}{C }  \left(\frac{\sqrt{t}}{\rho_k(x)}\right)^{2-\frac{n+2\gamma}{q}}.  
    \end{align} Applying \eqref{V-estimate} in \eqref{assumption-1}, we deduce that 
    \begin{align*}
        k_t(x,y) \leq \frac{C}{\omega_k(B(x,\sqrt{t}))} \exp\left(-\frac{d_\mathcal{O}(x,y)^2}{ct} \right) \left( 1+ \frac{\sqrt{t}}{\rho_k(x)} \right)^{-\left({2-\frac{n+2\gamma}{q}}\right)m}, 
    \end{align*} true for all $m>0$. This concludes the desired inequality.
\end{proof}
Before stating the next result, we recall the measure $\mu_V$ associated with the potential function $V$ on $\mathbb{R}^n$ introduced in \cite{Hejna-2021}. For every measurable set $A\subset \mathbb{R}^n$,
\[
\mu_V(A)=\int_A V(x)\,w_k(x)\,dx.
\]
It was shown in \cite{Hejna-2021} that there exists a constant $C_{\mu_V}>0$ such that
\begin{align}\label{doubling measure V}
\mu_V(B(x,2r))
\leq
C_{\mu_V}\mu_V(B(x,r)),
\qquad x\in\mathbb{R}^n,\; r>0.
\end{align}
Further properties of the measure $\mu_V$ can also be found in \cite{Hejna-2021}.
\begin{lemma}
There exists a constant $C>0$ such that for every $x\in B_{(j,d)}$ and 
$\sqrt{t}\leq c_0 \rho_k(x)$,
\begin{align}\label{assumption-2}
\int_{B_{(j,d)}^{**}} \mathcal{G}_{t}(x,y)V(y)w_k(y)dy
\leq
Ct^{-1}
\left(\frac{\sqrt{t}}{\rho_k(x)}\right)^{2-\frac{n+2\gamma}{q}} .
\end{align}
\end{lemma}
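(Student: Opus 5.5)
Since $\mathcal{G}_t(x,y)=\big(\max\{\omega_k(B(x,\sqrt t)),\omega_k(B(y,\sqrt t))\}\big)^{-1}\exp(-d_\mathcal{O}(x,y)^2/t)$ and $d_\mathcal{O}(x,y)=\min_{\sigma\in G}|x-\sigma(y)|$, the plan is to dominate the Gaussian factor by a sum over the reflection group:
\[
\exp\Big(-\frac{d_\mathcal{O}(x,y)^2}{t}\Big)\le \sum_{\sigma\in G}\exp\Big(-\frac{|\sigma(x)-y|^2}{t}\Big).
\]
We also use the $G$-invariance of $w_k$, together with the fact that $\omega_k(B(\sigma(x),\sqrt t))=\omega_k(B(x,\sqrt t))$. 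This reduces the estimate, up to the factor $|G|$, to bounding
\[
\frac{1}{\omega_k(B(z,\sqrt t))}\int_{\mathbb{R}^n}e^{-|z-y|^2/t}V(y)w_k(y)\,dy
\]
for $z=\sigma(x)$. The domain $B_{(j,d)}^{**}$ is simply enlarged to $\mathbb{R}^n$, or kept only to guarantee local integrability.

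The core step is a dyadic decomposition around $z$. Split $\mathbb{R}^n$ into $B(z,\sqrt t)$ and the annuli $B(z,2^{l}\sqrt t)\setminus B(z,2^{l-1}\sqrt t)$ for $l\ge1$. This gives the bound
\[
\sum_{l\ge0}e^{-c4^{l}}\frac{\mu_V(B(z,2^l\sqrt t))}{\omega_k(B(z,\sqrt t))}
\le C\sum_{l\ge0}e^{-c4^l}2^{l(n+2\gamma)}\frac{\mu_V(B(z,2^l\sqrt t))}{\omega_k(B(z,2^l\sqrt t))},
\]
where the volume ratio \eqref{Volume ratio} has been used. Each term is then controlled by the estimate of \cite[Lemma 7]{Hejna-2021}, already used in the proof of Lemma \ref{k-kernel-estimate}: for $r\le R$,
\[
\frac{r^2}{\omega_k(B(z,r))}\mu_V(B(z,r))\le C\Big(\frac rR\Big)^{2-\frac{n+2\gamma}{q}}\frac{R^2}{\omega_k(B(z,R))}\mu_V(B(z,R)).
\]
Take $r=2^l\sqrt t$ and $R=\rho_k(z)$ whenever $2^l\sqrt t\le\rho_k(z)$, where the right-hand average equals $1$ (up to a constant) by the definition of $\rho_k$. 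This yields the term bound $C t^{-1}4^{-l}(2^l\sqrt t/\rho_k(z))^{2-\frac{n+2\gamma}{q}}$.

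For the large annuli, $2^l\sqrt t>\rho_k(z)$, use the doubling property \eqref{doubling measure V} of $\mu_V$ (or the polynomial growth from \cite[Lemma 8]{Hejna-2021}). This gives growth at most $C2^{lM}$ relative to the radius $\rho_k(z)$, which the factor $e^{-c4^l}$ absorbs. Summing over $l$ then gives $Ct^{-1}(\sqrt t/\rho_k(z))^{2-\frac{n+2\gamma}{q}}$, which requires $\sqrt t\lesssim\rho_k(z)$.

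The main obstacle is passing from $\rho_k(\sigma(x))$ back to $\rho_k(x)$. I expect $\rho_k$ to be $G$-invariant, because $V$ need not be, and this must be handled. What I would try first is to avoid reflections entirely: since $y\in B_{(j,d)}^{**}$ and $x\in B_{(j,d)}$, we have $|x-y|\le C2^{-j/2}\approx\rho_k(x)$ by Lemma \ref{Lemma_1}. So I would instead use the cruder bound $\mathcal{G}_t(x,y)\le \omega_k(B(y,\sqrt t))^{-1}$ and split $B_{(j,d)}^{**}$ into the pieces where $|x-y|\le\sqrt t$ and the remainder. On the piece $|x-y|\le\sqrt t$ the argument above applies directly at $z=x$ with $\omega_k(B(y,\sqrt t))\approx\omega_k(B(x,\sqrt t))$. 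On the remainder, the exponential must come from $d_\mathcal{O}$. If this fails near reflected copies, I would fall back to showing $\rho_k(\sigma(x))\approx\rho_k(x)$ for those $\sigma$ with $\sigma(x)$ close to $B_{(j,d)}^{**}$, using \eqref{Inq_1}–\eqref{Inq_2}.
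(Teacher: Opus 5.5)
Your argument works once you commit to your fallback, and it is a genuinely different route from the paper's. The fallback is not optional, though. Enlarging $B_{(j,d)}^{**}$ to $\mathbb{R}^n$ makes the inequality false, because $\rho_k$ is not $G$-invariant (your sentence presumably should read ``I do not expect''). For example, take $n=2$, the single reflection $\sigma(y)=(-y_1,y_2)$, $V(y)=(y_1-y_2)^2$ (a polynomial, hence in every $RH^q_k$), and $x=(s,s)$ with $s$ large. Then $\rho_k(x)\sim 1$, but $V\approx 4s^2$ on $B(\sigma(x),\sqrt t)$, where $d_\mathcal{O}(x,y)<\sqrt t$. So at $\sqrt t\sim c_0$ the integral over $\mathbb{R}^2$ is $\gtrsim s^2$, while the right-hand side is $O(1)$.

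The clean way to use the localization is to split the reflections after bounding $e^{-d_\mathcal{O}(x,y)^2/t}\le\sum_\sigma e^{-|\sigma(x)-y|^2/t}$:
\begin{itemize}
\item If $\operatorname{dist}(\sigma(x),B_{(j,d)}^{**})<2^{4-j/2}$, then $|\sigma(x)-x|\lesssim 2^{-j/2}\sim\rho_k(x)$. Hence \eqref{Inq_1}--\eqref{Inq_2} give $\rho_k(\sigma(x))\sim\rho_k(x)\ge\sqrt t/c_0$, and your dyadic argument at $z=\sigma(x)$, now legitimately over $\mathbb{R}^n$, gives the target bound.
\item Otherwise $|\sigma(x)-y|\ge 2^{4-j/2}\gtrsim\rho_k(x)$ for every $y\in B_{(j,d)}^{**}$. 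That term is then at most $e^{-c\rho_k(x)^2/t}\,\mu_V(B_{(j,d)}^{**})/\omega_k(B(x,\sqrt t))\lesssim t^{-1}e^{-cs^2}s^{n+2\gamma-2}$ with $s=\rho_k(x)/\sqrt t\ge c_0^{-1}$, which is harmless.
\end{itemize}
With this split, your intermediate ``first try'' (splitting by $|x-y|\le\sqrt t$) is unnecessary, and on its own it was incomplete.

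The paper avoids reflections altogether. It applies H\"older with the reverse H\"older exponent $q$ on the fixed ball $B_{(j,d)}^{**}$. The potential then enters only through $\bigl(\omega_k(B_{(j,d)}^{**})^{-1}\int_{B_{(j,d)}^{**}}V^q w_k\bigr)^{1/q}\le C_{RH}\,\mu_V(B_{(j,d)}^{**})/\omega_k(B_{(j,d)}^{**})\lesssim\rho_k(x)^{-2}$. The kernel enters through $\|\mathcal{G}_t(x,\cdot)\|_{L^{q'}_k}\lesssim\omega_k(B(x,\sqrt t))^{-1/q}$, a bound valid over all of $\mathbb{R}^n$ no matter where the orbit of $x$ lies. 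The volume ratio then converts $(\omega_k(B_{(j,d)}^{**})/\omega_k(B(x,\sqrt t)))^{1/q}$ into $(\rho_k(x)/\sqrt t)^{(n+2\gamma)/q}$, and the exponent $2-\frac{n+2\gamma}{q}$ falls out directly. So the paper needs neither the scaling lemma \cite[Lemma 7]{Hejna-2021} nor a comparison of $\rho_k$ at reflected points. Your route is longer, but it isolates a reusable pointwise fact: the Gaussian average of $V$ around any $z$ with $\sqrt t\lesssim\rho_k(z)$ is $\lesssim t^{-1}(\sqrt t/\rho_k(z))^{2-\frac{n+2\gamma}{q}}$. It also makes explicit why the restriction to $B_{(j,d)}^{**}$ cannot be dropped.
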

\begin{proof}
Using the fact that $V\in RH_k^q(\mathbb{R}^n)$, and H\"olders inequality, we obtain 
\begin{align*}
   I&=\int_{B_{(j,d)}^{**}} \mathcal{G}_{t}(x,y)\,V(y)\,w_k(y)dy \\
    & \leq \left( \frac{1}{\omega_k\left( B_{(j,d)}^{**}\right)} \int_{B_{(j,d)}^{**}}V(y)^qw_k(y)dy  \right)^{1/q} \omega_k\left( B_{(j,d)}^{**}\right)^{1/q} \left(\int_{B_{(j,d)}^{**}} \mathcal{G}_{t}(x,y)^{q^\prime}w_k(y)dy\right)^{1/q^{\prime}},
\end{align*} where $q^\prime$ is the conjugate exponent of $q$. Observe that for each $x\in B_{(j,d)}$, we have $\omega_k(B(x,\rho_k(x)))\sim \omega_k\left( B_{(j,d)}^{**} \right)$. Then 
\begin{align*}
   I_1 &= \omega_k\left( B_{(j,d)}^{**}\right)^{1/q}\left(\int_{B_{(j,d)}^{**}} \mathcal{G}_{t}(x,y)^{q^\prime}w_k(y)dy\right)^{1/q^{\prime}}
    \\ & \leq C  \left(\int_{B_{(j,d)}^{**}} 
    \frac{\omega_k \left( B(x, \rho_k(x)) \right)^{q^\prime/q}}{ \omega_k \left( B(x, \sqrt{t})) \right)^{q^\prime-1}} \frac{ \exp \left( \frac{-q^\prime d_{\mathcal{O}}(x,y)^2}{t} \right) }{ \omega_k \left( B(y, \sqrt{t})) \right) }
    w_k(y)dy\right)^{1/q^{\prime}}.
\end{align*}
Since $\sqrt{t} \leq c_0\rho_k(x)$, it follows from \eqref{Volume ratio} that $I_1 \leq C \left( \frac{\rho_k(x)}{\sqrt{t}} \right)^{(n+2\gamma)/q}$. Using \eqref{RVC} and \eqref{doubling measure V}, we deduce that  
\begin{align*}
    \left( \frac{1}{\omega_k\left( B_{(j,d)}^{**}\right)} \int_{B_{(j,d)}^{**}}V(y)^qw_k(y)dy  \right)^{1/q}  & \leq  \frac{C_{RH}}{\omega_k\left( B_{(j,d)}^{**}\right)} \int_{B_{(j,d)}^{**}}V(y)w_k(y)dy  \\
   & \leq \frac{C}{\rho_k(x)^2}.
\end{align*} Thus, we have the desired inequality
\begin{align*}
    \int_{B_{(j,d)}^{**}} \mathcal{G}_{t}(x,y)V(y)w_k(y)dy
\leq
Ct^{-1}
\left(\frac{\sqrt{t}}{\rho_k(x)}\right)^{2-\frac{n+2\gamma}{q}} .
\end{align*}
\end{proof}

\begin{lemma} \label{lemma-4.1}
    Let $f\in L_k^1(\mathbb{R}^n)$. Then there exists a constant $C>0$ such that for each $\Psi_{(j,d)}$,  we have 
    \begin{align} \label{Lemma4.4-1}
        \int_{\mathbb{R}^n \setminus B_{(j,d)^*}} \,\sup\limits_{0<t\leq 2^{-j}} H_t(|\Psi_{(j,d)} f|)(x)w_k(x)dx \leq C \|\Psi_{(j,d)} f\|_{L_k^2(\mathbb{R}^n)}
    \end{align} and 
    \begin{align}\label{Lemma4.4-2}
        \int_{\mathbb{R}^n \setminus B_{(j,d)^*}} \,\sup\limits_{0<t\leq 2^{-j}} \mathbf{K}_t(|\Psi_{(j,d)} f|)(x)w_k(x)dx \leq C \|\Psi_{(j,d)} f\|_{L_k^2(\mathbb{R}^n)}.
    \end{align}
\end{lemma}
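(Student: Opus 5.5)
The plan is to reduce both inequalities to a single kernel estimate for the majorant $\sup_{0<t\le 2^{-j}} h_t(x,y)$, valid when $y\in B(x_{(j,d)},2^{1-j/2})$ and $x\notin B_{(j,d)}^*$. Then integrate in $x$ and pass from $L^1_k$ to $L^2_k$ by H\"older. Write $g=\Psi_{(j,d)}f$. By Proposition \ref{partition of unity}, $g$ is supported in $B(x_{(j,d)},2^{1-j/2})$. Since $0\le k_t\le h_t$ by \eqref{kernel estimate_1}, we have $\mathbf{K}_t(|g|)\le H_t(|g|)$ pointwise, so \eqref{Lemma4.4-2} follows from \eqref{Lemma4.4-1}. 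Only the heat-semigroup inequality therefore needs proof.

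\textbf{Step 1 (pointwise bound).} For $x\notin B_{(j,d)}^*=B(x_{(j,d)},2^{3-j/2})$ and $y\in \operatorname{supp} g$ we have $|x-y|\ge 2^{-j/2}$, and since $t\le 2^{-j}$ also $|x-y|/\sqrt t\ge 1$. From \eqref{heat kernel esti_2},
\[
h_t(x,y)\le C\Big(\frac{\sqrt t}{|x-y|}\Big)^{2}\frac{e^{-d_{\mathcal O}(x,y)^2/(ct)}}{\omega_k(B(x,\sqrt{t/c}))}.
\]
The volume factor is handled with \eqref{Volume ratio}:
\[
\omega_k(B(x,\sqrt t))^{-1}\le C\,\big(2^{-j/2}/\sqrt t\big)^{n+2\gamma}\,\omega_k(B(x,2^{-j/2}))^{-1}.
\]
The power of $\sqrt t$ is absorbed by the Gaussian whenever $d_{\mathcal O}(x,y)\gtrsim 2^{-j/2}$. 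When $d_{\mathcal O}$ is small, I instead use the symmetric maximum in $\mathcal G_t$ together with $G$-invariance of $\omega_k$ to replace $\omega_k(B(x,\cdot))$ by $\omega_k(B(\sigma(y),\cdot))$. The target is
\[
\sup_{0<t\le 2^{-j}}h_t(x,y)\le C\Big(\frac{2^{-j/2}}{|x-y|}\Big)^{2}\sum_{\sigma\in G}\frac{1}{\omega_k(B(\sigma(x),|x-\sigma(y)|))}.
\]

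\textbf{Step 2 (integration).} By Fubini,
\[
\int_{(B_{(j,d)}^*)^c}\sup_t H_t|g|\,w_k\,dx\le \int |g(y)|\,\Phi(y)\,w_k(y)dy,
\]
where $\Phi(y)=\int_{|x-y|\ge 2^{-j/2}}\sup_t h_t(x,y)\,w_k(x)\,dx$. Splitting into dyadic annuli $2^{i-j/2}\le|x-y|<2^{i+1-j/2}$ and using doubling, each annulus contributes $C2^{-2i}$. Hence $\Phi\le C$, with $C$ independent of $(j,d)$, and so the left side is at most $C\|g\|_{L^1_k}$.

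\textbf{Step 3 (the main obstacle): passing to $\|g\|_{L^2_k}$ with a uniform constant.} H\"older gives $\|g\|_{L^1_k}\le \omega_k(B_{(j,d)})^{1/2}\|g\|_{L^2_k}$, and this factor is not bounded in $(j,d)$ in general. So this step must avoid the crude route through $\|g\|_{L^1_k}$.

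What I would try first is to apply Cauchy--Schwarz to the $x$-integral before Fubini. On each annulus $A_i$ of radius $\sim 2^{i-j/2}$ around $B_{(j,d)}$, the bound of Step 1 gives
\[
\sup_t H_t|g|(x)\le C2^{-2i}\,M_k g(x),
\]
where $M_k$ is the uncentered maximal function with respect to $\omega_k$. Then
\[
\int_{A_i}\sup_t H_t|g|\,w_k\,dx\le C2^{-2i}\,\omega_k(A_i)^{1/2}\,\|M_kg\|_{L^2_k},
\]
and $\|M_kg\|_{L^2_k}\le C\|g\|_{L^2_k}$ with a uniform constant, since $\omega_k$ is doubling. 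This route still leaves the factor $\omega_k(A_i)^{1/2}$, so it does not remove the difficulty by itself. The plan would be to sharpen the decay in Step 1: instead of the factor $(1+|x-y|/\sqrt t)^{-2}$, exploit the Gaussian in $d_{\mathcal O}$ against \eqref{Volume ratio}, aiming for $2^{-Ni}$ decay and an explicit normalisation of the volume. This is the step I expect to fail or to require an additional normalisation, and I would check carefully whether the intended right-hand side is $C\|\Psi_{(j,d)}f\|_{L^1_k}$ rather than the $L^2_k$ norm; Step 2 proves the $L^1_k$ version rigorously. Finally, \eqref{Lemma4.4-2} follows from \eqref{Lemma4.4-1} via $k_t\le h_t$.
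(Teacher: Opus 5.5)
Your reduction of \eqref{Lemma4.4-2} to \eqref{Lemma4.4-1} via $k_t\le h_t$ is exactly the paper's. Your suspicion about the right-hand side is also justified: the paper's own argument ends with $C\|\Psi_{(j,d)}f\|_{L^1_k(\mathbb{R}^n)}$. That $L^1_k$ bound is what is used later, in Lemma \ref{Lamma B2} and for $\mathcal{J}_1$ in Lemma \ref{Lemma B3}, so your Step 3 can simply be dropped.

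The problem is that Steps 1--2 do not prove the $L^1_k$ bound either. Up to a factor $|G|$, the majorant you aim for is $\frac{2^{-j}}{|x-y|^2}\,\omega_k\big(B(x,d_{\mathcal O}(x,y))\big)^{-1}$, and this is not integrable in $x$. Near a reflected point $z=\sigma(y)\neq y$ one has $d_{\mathcal O}(x,y)=|x-z|$, and $\int_{B(z,R)}\omega_k(B(z,|x-z|))^{-1}w_k(x)\,dx=\infty$. For $k\equiv 0$ this is just $\int|x-z|^{-n}\,dx$; in general, each block of a fixed number of dyadic shells around $z$ contributes a fixed positive amount, by doubling. Such a $z$ lies in the region of integration as soon as $y$ is at distance $\gtrsim 2^{-j/2}$ from the reflecting hyperplane. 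So your claim that each annulus contributes $C2^{-2i}$ fails for the terms with $\sigma\neq\mathrm{id}$, and $\Phi\le C$ is not established. The loss comes from combining two worst cases that never occur at the same time. You bound $t/|x-y|^2$ using the largest time $t=2^{-j}$, but you bound $\omega_k(B(x,\sqrt t))^{-1}e^{-d_{\mathcal O}(x,y)^2/(ct)}$ by its value at the small time $t\sim d_{\mathcal O}(x,y)^2$.

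The paper never forms a pointwise bound for $\sup_t h_t$. It splits $(0,2^{-j}]$ into $I_m=(2^{-(j+m+1)},2^{-(j+m)}]$ and uses $\sup_{0<t\le 2^{-j}}\le\sum_{m\ge 0}\sup_{t\in I_m}$. On $I_m$, the condition $|x-y|\ge 2^{-j/2}$ in \eqref{heat kernel esti_2} gives $h_t(x,y)\le C\,2^{-m}\,\omega_k(B(x,2^{-(j+m)/2}))^{-1}\exp\big(-c\,d_{\mathcal O}(x,y)^2\,2^{j+m}\big)$. At one fixed scale this Gaussian majorant has $\int_{\mathbb{R}^n}(\cdot)\,w_k(x)\,dx\le C|G|$ uniformly in $y$: bound $e^{-c\,d_{\mathcal O}(x,y)^2/s}\le\sum_{\sigma\in G}e^{-c|x-\sigma(y)|^2/s}$ and use doubling to compare $\omega_k(B(x,\sqrt s))$ with $\omega_k(B(\sigma(y),\sqrt s))$. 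Fubini then gives $C\sum_m 2^{-m}\|\Psi_{(j,d)}f\|_{L^1_k(\mathbb{R}^n)}$. Replacing your Steps 1--2 by this dyadic decomposition in $t$ repairs the argument. If you want to keep a pointwise majorant, it must retain the correct $t$-dependence near the reflected points, which is more delicate than the decomposition.
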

\begin{proof}
  We write 
  \begin{align*}
  I& = \int\limits_{\mathbb{R}^n \setminus B_{(j,d)}^*}  \,\sup\limits_{0<t\leq 2^{-j}} H_t(|\Psi_{(j,d)} f|)(x)w_k(x)dx \\ & = \sum_{m=0}^\infty  \int_{\mathbb{R}^n \setminus B_{(j,d)}^*}\,\sup\limits_{2^{-(j+m+1)}<t\leq 2^{-(j+m)}} \int\limits_{B(x_{(j,d)}, 2^{1-j/2})} h_t(x,y) \Psi_{(j,d)}(y) | f(y)| w_k(y)dyw_k(x)dx .
  \end{align*}  Note that $x\in \mathbb{R}^n \setminus B_{(j,d)}^* $ and $y\in B(x_{(j,d)}, 2^{1-j/2})$, then we have $|x-y|\geq 2^{-j/2}$. Invoking   \eqref{heat kernel esti_2}, we get 
  \begin{align*}
   I &\leq  \sum_{m=0}^\infty C \frac{2^{-(m+j)}}{2^{-j}} \int\limits_{B(x_{(j,d)}, 2^{1-j/2})}|\Psi_{(j,d)}f(y)| \int\limits_{\mathbb{R}^n \setminus B_{(j,d)}^*}  \frac{\exp\left( -\frac{cd_\mathcal{O}(x,y)^2}{2^{-(j+m)}} \right)}{B(x,2^{-(j+m)/2})} w_k(x)dx w_k(y)dy\\
   & \leq C\|\Psi_{(j,d)}f\|_{L_k^1(\mathbb{R}^n)}.
  \end{align*} Thus, we have the inequality \eqref{Lemma4.4-1}. Now the inequality \eqref{Lemma4.4-2} will follows from \eqref{kernel estimate_1} and \eqref{Lemma4.4-1}.
\end{proof}
\begin{lemma}\label{Lamma B2}
   There exists a constant $C>0$ such that for every $\Psi_{(j,d)}$ and $f\in L_k^1(\mathbb{R}^n)$ we have 
   \begin{align*}
       \Bigg\| \sup\limits_{0<t\leq2^{-j}} \big| (H_t-\mathbf{K}_t) (\Psi_{(j,d)}f)\big| \Bigg\|_{L_k^1(\mathbb{R}^n)} \leq C \|\Psi_{(j,d)}f\|_{L_k^1(\mathbb{R}^n)}.
   \end{align*}
\end{lemma}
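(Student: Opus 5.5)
Write $g=\Psi_{(j,d)}f$, supported in $B(x_{(j,d)},2^{1-j/2})$. I split the $L_k^1$ norm into the region outside $B_{(j,d)}^{**}$ and the region inside it.

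\emph{Far region.} Since $0\le k_t\le h_t$ by \eqref{kernel estimate_1}, we have $|(H_t-\mathbf{K}_t)g|\le H_t|g|$. The argument of Lemma \ref{lemma-4.1} uses the separation $|x-y|\ge 2^{-j/2}$ between the support of $g$ and the complement of $B_{(j,d)}^*$, together with the factor $(1+|x-y|/\sqrt t)^{-2}$ from \eqref{heat kernel esti_2}. It therefore bounds the integral of $\sup_{0<t\le 2^{-j}}H_t|g|$ over $\mathbb{R}^n\setminus B_{(j,d)}^*$, and hence over $\mathbb{R}^n\setminus B_{(j,d)}^{**}$, by $C\|g\|_{L_k^1}$.

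\emph{Near region.} For $x\in B_{(j,d)}^{**}$ I use the perturbation (Duhamel) formula
\begin{align*}
h_t(x,y)-k_t(x,y)=\int_0^t\int_{\mathbb{R}^n} h_{s}(x,z)V(z)k_{t-s}(z,y)\,w_k(z)\,dz\,ds \ge 0,
\end{align*}
which holds since $\tilde{\mathcal L}_k$ is a form perturbation of $-\Delta_k$ by $V\ge0$. Bounding $k_{t-s}$ by $h_{t-s}$, it suffices to show
\begin{align*}
\int_{B_{(j,d)}^{**}}\sup_{0<t\le 2^{-j}}\int_0^t\int h_s(x,z)V(z)h_{t-s}(z,y)\,w_k(z)\,dz\,ds\,w_k(x)\,dx\le C
\end{align*}
uniformly in $y$ in the support of $g$. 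Fubini then gives the bound $C\|g\|_{L_k^1}$.

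Dropping the supremum is legitimate: the integrand is nonnegative, so the supremum over $t\le 2^{-j}$ is dominated by extending the $s$-integral to $(0,2^{-j}]$ and taking the supremum of $h_{t-s}(z,y)$ over $t$. To avoid that supremum I integrate in $x$ first, using $\int h_s(x,z)w_k(x)\,dx=1$. This leaves
\begin{align*}
\int_0^{2^{-j}}\int V(z)\sup_{s<t\le 2^{-j}}h_{t-s}(z,y)\,w_k(z)\,dz\,ds,
\end{align*}
after first splitting $V$ into its parts on $B_{(j,d)}^{**}$ and off it.

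For the part off $B_{(j,d)}^{**}$, the point $z$ is at distance at least about $2^{-j/2}$ from $y$. Heat kernel decay then makes the supremum over $t$ harmless, and the doubling of $\mu_V$ in \eqref{doubling measure V} controls $V$ on dyadic annuli. Since $\rho_k\sim 2^{-j/2}$ near the ball (Lemma \ref{Lemma_1}), this part is a convergent geometric sum.

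\emph{Main obstacle.} The main obstacle is the part on $B_{(j,d)}^{**}$, where the supremum of $h_u(z,y)$ over $u$ is not integrable in the time variable. Here I would instead keep the order $\sup_t\int_0^t$. By the symmetric role of $h_s(x,z)$, I use $\mathcal{G}$-bounds and estimate \eqref{assumption-2}, applied with $x$ replaced by $y$ and $t$ by $u=t-s$, after noting that $\sqrt{u}\le 2^{-j/2}\lesssim c_0\rho_k(y)$. Heuristically this gives
\begin{align*}
\int_{B^{**}}\mathcal{G}_u(z,y)V(z)\,w_k(z)\,dz\lesssim u^{-1}\Big(\frac{\sqrt u}{\rho_k(y)}\Big)^{\delta},\qquad \delta=2-\frac{n+2\gamma}{q}>0.
\end{align*}
Combining this with $\int h_s(x,z)w_k(x)\,dx=1$, the total is bounded by
\begin{align*}
\int_0^{2^{-j}}u^{-1+\delta/2}\rho_k(y)^{-\delta}\,du\lesssim\bigl(2^{-j/2}/\rho_k(y)\bigr)^{\delta}\le C.
\end{align*}
Making the interchange of the supremum with this integration rigorous is the delicate step. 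I would handle it by dyadically decomposing $t\in(2^{-(j+m+1)},2^{-(j+m)}]$ and bounding each piece by $2^{-m\delta/2}$, which sums.
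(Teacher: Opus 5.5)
There is a genuine gap, in the near region: the part of $V$ off $B_{(j,d)}^{**}$, after you replace $k_{t-s}$ by $h_{t-s}$. The rest is fine. Your far region is correct. So is your treatment of the part of $V$ supported in $B_{(j,d)}^{**}$, using $h$--$h$ bounds and \eqref{assumption-2}. For that part, make the dyadic step explicit by splitting $s<t/2$ and $s\ge t/2$, so that in each half one kernel lives at scale $\sim t$ and is bounded uniformly on the dyadic $t$-range; this is the paper's $J_1$, $J_2$.

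\textbf{Why the off-ball part fails.} In the Dunkl setting the Gaussian factor in \eqref{heat kernel esti_2} involves only the orbital distance $d_\mathcal{O}$. The Euclidean separation $|z-y|\gtrsim 2^{-j/2}$ gives you only the factor $(1+|z-y|/\sqrt u)^{-2}$. Near each reflected point $\sigma(y)$, $\sigma\in G$, the kernel $h_u(z,y)$ can be as large as $\frac{u}{|y-\sigma(y)|^2}\,\omega_k(B(y,\sqrt u))^{-1}$. The point $\sigma(y)$ may lie in a dyadic annulus of arbitrarily large index $l$ (depending on $(j,d)$), where your damping is $4^{-l}$ while $\mu_V$ may have grown by $C_{\mu_V}^{l}$. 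Moreover, $V$ is not $G$-invariant, so $\rho_k(\sigma(y))$ can be far smaller than $\rho_k(y)\sim 2^{-j/2}$; the fact that $\rho_k\sim 2^{-j/2}$ near the ball says nothing there. So this is not a convergent geometric sum, and the bound you reduce to is false.

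\textbf{Counterexample.} Work in $\mathbb{R}^2$ with the single reflection $\sigma(x_1,x_2)=(-x_1,x_2)$, multiplicity $k>0$, and $V(x)=|x_1-x_2^2|^{a}$. This $V$ is in $RH_k^q$ for every $q$, by a Remez-type sublevel-set estimate together with $w_k\in A_\infty$.
\begin{itemize}
\item At $y=(t^2,t)$ one has $\rho_k(y)\approx t^{-a/(a+2)}$, but $\rho_k(\sigma(y))\approx t^{-a}$.
\item The ball $B(\sigma(y),2^{-j/2})$ alone contributes at least of order $t^{2a-4-4a/(a+2)}$ to $2^{-j}\int V(z)\sup_{u\le 2^{-j}}h_u(z,y)\,w_k(z)\,dz$.
\item Even your unreduced quantity $\int_{B_{(j,d)}^{**}}\sup_t\int_0^t\int h_s(x,z)V(z)h_{t-s}(z,y)\,w_k(z)\,dz\,ds\,w_k(x)\,dx$ is at least of order $t^{2a-8-6a/(a+2)}$.
\end{itemize}
Both grow without bound in $t$ for large $a$ (e.g.\ $a=10$). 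The replacement $k_{t-s}\le h_{t-s}$ discards exactly the decay of $k$ where $V$ is large. No constant uniform in $(j,d)$ can be recovered afterwards.

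\textbf{The missing idea.} Keep $k$ on the far part of $V$ and use the global bound
$\int_0^\infty\int V(z)\,k_s(z,y)\,w_k(z)\,dz\,ds\le 1$.
This follows from the Duhamel formula and $\int h_s(x,z)\,w_k(x)\,dx=1$; it is \cite[Lemma 16]{Hejna-2021}. The smallness must then come from the other kernel, $h_s(x,z)$. To get it, split $V$ at a ball strictly larger than the region where $x$ lives. For example, take $x\in B_{(j,d)}^{*}$ (the complement is handled as in your far region) and split $V$ at $B_{(j,d)}^{**}$. Then $|x-z|\gtrsim 2^{-j/2}$, and \eqref{heat kernel esti_2} gives $h_s(x,z)\lesssim (s/2^{-j})\,\mathcal{G}_{s/c}(x,z)$. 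Decompose $t$ and $s$ dyadically, integrate out $x$, and apply the global bound. The result is $\sum_{l,m}2^{-(l+m)}\|g\|_{L_k^1}$. This is how the paper treats $I_2$.
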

\begin{proof}
    Using Lemma \ref{lemma-4.1}, it is enough to prove 
    \begin{align*}
        \Bigg\| \sup\limits_{0<t\leq2^{-j}} \big| (H_t-\mathbf{K}_t) (\Psi_{(j,d)}f)\big| \Bigg\|_{L_k^1(B_{(j,d)})^*} \leq C \|\Psi_{(j,d)}f\|_{L_k^1(\mathbb{R}^n)}.
    \end{align*}
    Invoking the perturbation formula, we have 
    \begin{align*}
        (H_t-\mathbf{K}_t) (\Psi_{(j,d)}f)(x) & = \int_0^t \int_{\mathbb{R}^n} h_{t-s}(x,y)V(y)\mathbf{K}_s(\Psi_{(j,d)}f)(y)w_k(y)dy\,ds\\
        &= \int_0^t \int_{\mathbb{R}^n}  h_{t-s}(x,y)(V_1(y)+V_2(y))\mathbf{K}_s(\Psi_{(j,d)}f)(y)w_k(y)dy\,ds\\
        & =I_1+I_2,
    \end{align*}
where 
\begin{align*}
    I_i(x) = \int_0^t \int_{\mathbb{R}^n}  h_{t-s}(x,y)V_i(y)\mathbf{K}_s(\Psi_{(j,d)}f)(y)w_k(y)dy\,ds \quad \text{ for } i=1,2 
\end{align*}
and $V_1=V\chi_{B_{(j,d)}^*}$ and $V_2=V\chi_{\mathbb{R}^n \setminus B_{(j,d)}^*}$. Note that for each $y\in \mathbb{R}^n \setminus B_{(j,d)}^* $ and $x\in B_{(j,d)}$, we have $|x-y|> 2^{-j/2}$. Consequently, 
\begin{align*}
   &\sup\limits_{0<t\leq 2^{-j}}|I_2(x)| \\&=  \sup\limits_{0<t\leq 2^{-j}}\Bigg| \int_0^t \int_{\mathbb{R}^n} h_{t-s}(x,y)V_2(y)\mathbf{K}_s(\Psi_{(j,d)}f)(y)w_k(y)dy\,ds
   \Bigg|\\
   & \leq \sum_{m=0}^\infty\, \sup\limits_{2^{-(j+m+1)}<t\leq 2^{-(j+m)}}\int_0^t \int_{\mathbb{R}^n} h_{t-s}(x,y)V_2(y)\mathbf{K}_s(|\Psi_{(j,d)}f|)(y)w_k(y)dy\,ds\\
   &  \leq \sum_{m=0}^\infty \,\sup\limits_{2^{-(j+m+1)}<t\leq 2^{-(j+m)}} \sum_{l=0}^\infty  \int_{t-2^{-l}t}^{t-2^{-(l+1)}t} \int_{\mathbb{R}^n} h_{t-s}(x,y)V_2(y)\mathbf{K}_s(|\Psi_{(j,d)}f|)(y)w_k(y)dy\,ds.
 \end{align*} Using \eqref{heat kernel esti_2}, we have
\begin{align*}
\sup_{0<t \leq 2^{-j}} |I_2(x)|
&\leq C \sum_{m,l=0}^\infty 
\sup_{2^{-(j+m+1)} < t \leq 2^{-(j+m)}} 
\int_{t-2^{-l}t}^{t-2^{-(l+1)}t}  
\frac{t-s}{\omega_k(B(x,\sqrt{t-s}))} \\
&\quad \quad \times \int_{\mathbb{R}^n} 
\frac{\exp\left(-\frac{c\, d_\mathcal{O}(x,y)^2}{t-s}\right)}{|x-y|^2} 
\, V_2(y)\, \mathbf{K}_s(|\Psi_{(j,d)}f|)(y)\, w_k(y)dy\, ds \\
&\leq C \sum_{m,l=0}^\infty  
\int_0^\infty \int_{\mathbb{R}^n} 
\frac{2^{-(j+l+m)}}{2^{-j}} 
\frac{\exp\left(-\frac{c\, d_\mathcal{O}(x,y)^2}{2^{-(j+l+m)}}\right)}
{\omega_k(B(x,2^{-(j+l+m)/2}))} \\
&\quad \quad \times V_2(y)\, \mathbf{K}_s(|\Psi_{(j,d)}f|)(y)\, w_k(y)dy\, ds.
\end{align*}
Integrating both sides with respect to $w_k(x)\,dx$ and applying Tonelli's theorem, we obtain
\begin{align*}
    \int_{B_{(j,d)}^*}  \sup\limits_{0<t\leq 2^{-j}}|I_2(x)| w_k(x)dx 
    \leq C \sum_{m,l=0}^{\infty} 2^{-(l+m)} \int_0^\infty \int_{\mathbb{R}^n} V(y) \mathbf{K}_s(|\Psi_{(j,d)}f|)(y)w_k(y)dy\,ds.
\end{align*} Finally, invoking \cite[Lemma 16]{Hejna-2021}, we deduce that 
\begin{align*}
     \int_{B_{(j,d)}^*}  \sup\limits_{0<t\leq 2^{-j}}|I_2(x)| w_k(x)dx  \leq C \|\Psi_{(j,d)}f\|_{L_k^1(\mathbb{R}^n)}.
\end{align*}
Now, considering the case $I_1$, we write 
\begin{align*}
  \int_0^t \int_{\mathbb{R}^n}& h_{t-s}(x,y)V_1(y)\mathbf{K}_s(\Psi_{(j,d)}f)(y)w_k(y)dy\,ds  \\ & = \left(\int_0^{t/2} + \int_{t/2}^t\right)\int_{\mathbb{R}^n} h_{t-s}(x,y)V_1(y)\mathbf{K}_s(\Psi_{(j,d)}f)(y)w_k(y)dy\,ds\\
  & = J_1(x)+ J_2(x).
\end{align*} 
Using \eqref{heat kernel esti_2}, \eqref{kernel estimate_1}, \eqref{assumption-2}, and Fubini's theorem, we obtain 

\begin{align*}
    & \Bigg\| \sup_{0<t\leq 2^{-j}} |J_1(x)|  \Bigg\|_{L_k^1(\mathbb{R}^n)}\\
    & \leq \sum_{m=0}^\infty \Bigg\|  \sup_{2^{-(j+m+1)}<t\leq 2^{-(j+m)}} |J_1(x)|  \Bigg\|_{L_k^1(\mathbb{R}^n)} \\
    & \leq C\sum_{m=0}^\infty \int_{\mathbb{R}^n} \int_0^{2^{-(j+m)}}\int_{\mathbb{R}^n}\frac{\exp\left(-\frac{d_\mathcal{O}(x,y)^2}{c2^{-(j+m)}} \right)}{\omega_k(B(y,2^{-(J+m)/2}))}V_1(y)\mathbf{K}_s(|\Psi_{(j,d)}f|)(y)w_k(y)dy \,ds\,w_k(x)dx\\
    & \leq C \sum_{m=0}^\infty \int_0^{2^{-(j+m)}} \int_{\mathbb{R}^n} V_1(y) \int_{\mathbb{R}^n} k_s(y,z)\Psi_{(j,d)}(z)|f(z)|w_k(z)dz\,w_k(y)dy\,ds \\
    & \leq C \sum_{m=0}^\infty  \int_0^{2^{-(j+m)}} \int_{\mathbb{R}^n} \Psi_{(j,d)}(z)|f(z)| \int_{B_{(j,d)^*}}k_s(y,z)V(y)w_k(y)dy\,w_k(z)dz\,ds\\
    & \leq C \sum_{m=0}^\infty   \int_{\mathbb{R}^n} \Psi_{(j,d)}(z)|f(z)|  \int_0^{2^{-(j+m)}} \int_{B_{(j,d)^*}} \frac{\exp\left( -\frac{d_\mathcal{O}(z,y)^2}{cs} \right)}{\max\left\{ \omega_k(B(z,\sqrt{s})),\omega_k(B(y,\sqrt{s}) )\right\}}\\ \\
    & \quad \quad \quad \times V(y)w_k(y)dy  \, ds\,w_k(z)dz \\ \\
    & \leq C  \sum_{m=0}^\infty  \int_{\mathbb{R}^n} \Psi_{(j,d)}(z)|f(z)|  \int_0^{2^{-(j+m)}} \left( \frac{\sqrt{s}}{\rho_k(z)} \right)^{2-(n+2\gamma)/q} \frac{ds}{s} w_k(z) dz
\end{align*}
Using the relation that $\rho_k(z) \sim 2^{-j/2}$ for $z\in B_{(j,d)}^*$ and the condition $2-(n+2\gamma)/q >0$, we get
\begin{align*}
\Bigg\| \sup_{0<t\leq 2^{-j}} |J_1(x)|  \Bigg\|_{L_k^1(\mathbb{R}^n)}    & \leq C  \sum_{m=0}^\infty 2^{-j(1-(n+2\gamma)/4q)} \int_{\mathbb{R}^n} \Psi_{(j,d)}(z)|f(z)| w_k(z)dz\\
    & \leq C\,\| \Psi_{(j,d)}f\|_{L_k^1(\mathbb{R}^n)}.
\end{align*}  
In order to get the estimate for $J_2$, we write
\begin{align*}
    \Bigg\| \sup_{0<t\leq 2^{-j}} |J_2(x)|  \Bigg\|_{L_k^1(\mathbb{R}^n)}  \leq \sum_{m=0}^\infty  \Bigg\| \sup_{2^{-(j+m+1)}<t\leq 2^{-(j+m)}} |J_2(x)|  \Bigg\|_{L_k^1(\mathbb{R}^n)}.
\end{align*} Applying a change of variable, we have
\begin{align*}
    |J_2(x)| \leq \int_{\mathbb{R}^n}\int_0^{t/2}  h_s(x,y)V_1(y)\mathbf{K}_{(t-s)}(|\Psi_{(j,d)}f|)(y)w_k(y)dy\,ds.
\end{align*} Employing \eqref{heat kernel esti_2} and \eqref{kernel estimate_1}, we obtain 
\begin{align}
   &  \sup \limits_{2^{-(j+m+1)} < t\leq 2^{-(j+m)}} |J_2(x)|\notag\\ 
   & \leq {C} \int\limits_{\mathbb{R}^n} \int\limits_{0}^{2^{-(j+m+1)}} \mathcal{G}_{s/c}(x,y)\mathcal{G}_{2^{-(j+m)/c}}(y,z) V_1(y)\Psi_{(j,d)}(z)|f(z)|w_k(y)dy\,ds\, w_k(z)dz. \label{J_2-estimate}
\end{align} For $s \leq t \leq 2^{-(j+m+1)}$, we obtain 
\begin{align*}
   \exp\left( -\frac{cd_\mathcal{O}(x,y)^2}{s} \right)&  \exp\left( -\frac{cd_\mathcal{O}(y,z)^2}{2^{-(j+m)}} \right) \\& \leq  \exp\left( -\frac{cd_\mathcal{O}(x,y)^2}{2s} \right) \exp\left( -\frac{cd_\mathcal{O}(x,y)^2}{2^{-(j+m)}} \right) \exp\left( -\frac{cd_\mathcal{O}(y,z)^2}{2^{-(j+m)}} \right)\\
   & \leq \exp\left( -\frac{cd_\mathcal{O}(x,y)^2}{2s} \right)  \exp \left(  -\frac{cd_\mathcal{O}(x,z)^2}{2^{-(j+m+1)}} \right).
\end{align*}
Consequently, \eqref{J_2-estimate} can be bounded by 
\begin{align*}
     &  \sup \limits_{2^{-(j+m+1)} < t\leq 2^{-(j+m)}} |J_2(x)|\\ 
     & \leq {C}\int\limits_{\mathbb{R}^n} \int\limits_0^{2^{-(j+m+1)}}  \int\limits_{\mathbb{R}^n}\mathcal{G}_{2s/c}(x,y) \frac{\exp\left( -\frac{cd_\mathcal{O}(x,z)^2}{2^{-(j+m+1)}} \right)}{\omega_k(B(z,2^{-(j+m)/2}))}V_1(y) \Psi_{(j,d)}(z)|f(z)|w_k(y)dy\,ds\,w_k(z)dz.
\end{align*} Now, we use arguments similar to those in the case of $V_2$. Using Tonelli's theorem and \eqref{assumption-2}, we deduce that
\begin{align*}
       \sup \limits_{2^{-(j+m+1)} < t\leq 2^{-(j+m)}} |J_2(x)|\leq C 2^{-j(1-(n+2\gamma)/4q)} \|\Psi_{(j,d)}f\|_{L_k^1(\mathbb{R}^n)}.
\end{align*} Hence, 
\begin{align*}
     \Bigg\| \sup_{0<t\leq 2^{-j}} |J_2(x)|  \Bigg\|_{L_k^1(\mathbb{R}^n)}  \leq C\|\Psi_{(j,d)}f\|_{L_k^1(\mathbb{R}^n)}.
\end{align*}
\end{proof}
 \begin{lemma}\label{Lemma B3}
     Let $$\mathcal{M}_{(j,d)}f(x)= \sup_{0<t\leq 2^{-j}}\Big| \mathbf{K}_t\left(\Psi_{(j,d)}f\right)(x)- \Psi_{(j,d)}(x)\mathbf{K}_t(f)(x)\Big|.$$
     Then there exists a constant $C>0$ such that for all $f\in H_{\Tilde{\mathcal{L}}_k}^1$ we have 
     \begin{align*}
\sum_{(j,d)}\|\mathcal{M}_{(j,d)}f\|_{L_k^1(\mathbb{R}^n)} \leq C \left(\|f\|_{L_k^1(\mathbb{R}^n)} +\|f\|_{H_{\Tilde{\mathcal{L}}_k}^1}\right) .
     \end{align*}  
 \end{lemma}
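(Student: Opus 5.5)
We write the commutator as an integral against the kernel,
\begin{align*}
\mathbf{K}_t(\Psi_{(j,d)}f)(x)-\Psi_{(j,d)}(x)\mathbf{K}_tf(x)=\int_{\mathbb{R}^n} k_t(x,y)\big(\Psi_{(j,d)}(y)-\Psi_{(j,d)}(x)\big)f(y)\,w_k(y)\,dy,
\end{align*}
and estimate its $L_k^1$ norm separately on $B_{(j,d)}^{**}$ and on its complement. Away from the support of $\Psi_{(j,d)}$ and away from its enlarged ball, the two terms are controlled separately. The plan is to show, for each $(j,d)$,
\begin{align*}
\|\mathcal{M}_{(j,d)}f\|_{L_k^1(\mathbb{R}^n)}\le C\int_{\mathbb{R}^n}|f(y)|\,\Phi_{(j,d)}(y)\,w_k(y)\,dy,
\end{align*}
with $\sum_{(j,d)}\Phi_{(j,d)}\le C$. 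Summing over $(j,d)$ then gives $C\|f\|_{L_k^1}$, which is even stronger than the claimed bound. The term $\|f\|_{H^1_{\Tilde{\mathcal{L}}_k}}$ in the statement gives extra room if one piece needs it, for instance if one prefers to bound $\Psi_{(j,d)}\mathbf{K}_tf$ by $\Psi_{(j,d)}f^*$ instead.

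\textbf{Step 1 (near region).} Take $x\in B_{(j,d)}^{**}$. We use $|\Psi_{(j,d)}(y)-\Psi_{(j,d)}(x)|\le C2^{j/2}|x-y|$ from Proposition \ref{partition of unity}, together with \eqref{kernel estimate_1} and \eqref{heat kernel esti_2}. The factor $(1+|x-y|/\sqrt t)^{-2}$ is the key: with $u=|x-y|$ it gives
\begin{align*}
\sup_{t\le 2^{-j}}\ 2^{j/2}u\,\big(1+u/\sqrt t\big)^{-2}\,\mathcal{G}_{t/c}(x,y).
\end{align*}
We split dyadically in $t\in(2^{-j-m-1},2^{-j-m}]$. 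Each dyadic piece contributes at most $2^{j/2}\sqrt{t}\lesssim 2^{-m/2}$ times a Gaussian-type kernel in $(x,y)$, namely $\mathcal{G}_{2^{-j-m}/c}(x,y)$. Each such kernel integrates in $x$ to a bounded constant, by the doubling property and the Gaussian decay in $d_{\mathcal{O}}$. Summing over $m$ gives a contribution $\le C\int_{\widetilde B_{(j,d)}}|f|\,w_k$. The $y$-support can be restricted to a fixed dilate $\widetilde B_{(j,d)}$ of $B_{(j,d)}$, up to a tail that Step 2 also handles. The finite overlap in Proposition \ref{cover_1} then makes the sum over $(j,d)$ bounded by $C\|f\|_{L_k^1}$.

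\textbf{Step 2 (far region).} Take $x\notin B_{(j,d)}^{**}$. Then $\Psi_{(j,d)}(x)=0$, and the first term is exactly the one treated in Lemma \ref{lemma-4.1} via \eqref{Lemma4.4-2}. Its proof in fact yields $C\|\Psi_{(j,d)}f\|_{L^1_k}$, and these sum to $C\|f\|_{L^1_k}$ since $\sum\Psi_{(j,d)}=1$. For $x\in B_{(j,d)}^{**}$, the part of the integral with $y$ outside the dilate $\widetilde B_{(j,d)}$ has $|x-y|\gtrsim 2^{-j/2}R$ on dyadic shells. There we bound $|\Psi(y)-\Psi(x)|\le 1$ and use $(1+|x-y|/\sqrt t)^{-2}\le t/|x-y|^2$ for $t\le2^{-j}$, which produces decay $R^{-2}$. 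The number of $(j,d)$ meeting a given shell is only polynomial, $R^{C}$ by Proposition \ref{cover_1}. To beat that growth we need more decay than $R^{-2}$, so we take it from Lemma \ref{k-kernel-estimate} with large $N$. By Lemma \ref{Lemma_1}, points $y$ at distance $2^{-j/2}R$ from $B_{(j,d)}$ have $\rho_k(y)$ at most $\sim 2^{-j/2}R^{-C}$ or at least $\sim 2^{-j/2}R^{C}$. In the first case the factor $(1+\sqrt t/\rho_k(y))^{-N}$ gains powers of $R$.

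\textbf{Main obstacle.} The delicate point is exactly this far-$y$ tail. The Gaussian in the orbital distance $d_{\mathcal{O}}$ does not decay in $|x-y|$: reflected copies $\sigma(y)$ of $y$ can sit close to $x$, so we cannot rely on Euclidean decay. We would first try to extract decay only from the polynomial factor $(1+|x-y|/\sqrt t)^{-2}$. We would then sum over the reflection group $G$, which is finite, by covering $\{y: d_{\mathcal{O}}(x,y)<r\}$ with $|G|$ Euclidean balls $B(\sigma(x),r)$. If $R^{-2}$ against $R^{C}$ fails to be summable, we would instead handle the $\Psi_{(j,d)}(x)\mathbf{K}_tf(x)$ piece of the tail by $\Psi_{(j,d)}f^*$, which sums to $f^*$ and hence to $\|f\|_{H^1_{\Tilde{\mathcal{L}}_k}}$. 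The remaining piece $\mathbf{K}_t(\Psi_{(j,d)}f)$ on this region is already covered by Lemma \ref{lemma-4.1}.
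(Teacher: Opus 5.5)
Your Step 1 and your treatment of $x\notin B_{(j,d)}^{**}$ are sound and agree with the paper. The Lipschitz bound for $\Psi_{(j,d)}$, played against the factor $(1+|x-y|/\sqrt t)^{-2}$ with a dyadic split in $t$, is exactly how the paper handles the nearby indices. You are also right that the proof of Lemma \ref{lemma-4.1} actually gives $C\|\Psi_{(j,d)}f\|_{L^1_k}$.

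The gap is the tail $x\in\operatorname{supp}\Psi_{(j,d)}$, $y\notin\widetilde B_{(j,d)}$, i.e.\ the term $\Psi_{(j,d)}(x)\,\mathbf{K}_t(f\chi_{\mathbb{R}^n\setminus\widetilde B_{(j,d)}})(x)$. None of your three mechanisms closes it.

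(a) A per-$(j,d)$ bound $CR^{-2}\int_{\mathrm{shell}}|f|\,w_k$ followed by counting cannot work. For $V$ constant, $\rho_k$ is constant and all balls $B_{(j,d)}$ have the same radius, so about $R^n$ of them contain a given $y$ in their shell $|y-x_{(j,d)}|\sim 2^{-j/2}R$. Hence $\sum_R R^{n-2}$ diverges once $n\ge 2$.

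(b) Lemma \ref{Lemma_1} gives the two-sided bound $2^{-j/2}R^{-C}\lesssim\rho_k(y)\lesssim 2^{-j/2}R^{C}$, not a dichotomy. Whenever $\rho_k(y)\gtrsim 2^{-j/2}$ (e.g.\ $V$ constant), the factor $(1+\sqrt t/\rho_k(y))^{-N}$ from \eqref{k-kernel-estimate} is comparable to $1$ for all $t\le 2^{-j}$. In no case is its gain uniform as $t\to 0$.

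(c) The $f^*$ fallback fails. The tail is $\Psi_{(j,d)}\mathbf{K}_t(f\chi_{\mathbb{R}^n\setminus\widetilde B_{(j,d)}})$, which is not dominated by $\Psi_{(j,d)}f^*$, since truncation does not commute with the maximal operator. If you instead split the whole commutator, the leftover $\sup_{t\le 2^{-j}}|\mathbf{K}_t(\Psi_{(j,d)}f)|$ on $B_{(j,d)}^{**}$ is not covered by Lemma \ref{lemma-4.1}, which only concerns $x\notin B_{(j,d)}^{*}$. That leftover is a local maximal function of $\Psi_{(j,d)}f$ and is not integrable for general $f\in L^1_k$. Controlling it is exactly what the present lemma is used for in the proof of Theorem \ref{atomic decomposition}, so invoking it here would be circular.

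The idea you are missing is the paper's interchange of summation, which lets bounded overlap replace polynomial counting. Write $f=\sum_{(j',d')}\Psi_{(j',d')}f$. For the far indices, $|x_{(j,d)}-x_{(j',d')}|>c2^{-j/2}$, one has $\Psi_{(j,d)}=0$ on $B_{(j',d')}$. Their contribution on $B_{(j,d)}^*$ is therefore at most $\chi_{B^*_{(j,d)}}\sup_{t>0}\mathbf{K}_t(\Psi_{(j',d')}|f|)$.

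Now fix $(j',d')$ and sum over $(j,d)$ first. The balls $B^*_{(j,d)}$ have bounded overlap, and for $c$ large they are disjoint from $B^*_{(j',d')}$; this uses the sublinear exponent $\kappa_0/(\kappa_0+1)$ in \eqref{Inq_2}. Everything therefore reduces to $\|\sup_{t>0}\mathbf{K}_t(\Psi_{(j',d')}|f|)\|_{L^1_k(\mathbb{R}^n\setminus B^*_{(j',d')})}$, which is at most $C\|\Psi_{(j',d')}f\|_{L^1_k}$:
\begin{enumerate}[$(i)$]
\item Lemma \ref{lemma-4.1} handles $t\le 2^{-j'}$.
\item For $t>2^{-j'}$, use the semigroup property and \eqref{k-kernel-estimate}. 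Now $\rho_k\sim 2^{-j'/2}$ on $\operatorname{supp}\Psi_{(j',d')}$, so the factor $(1+\sqrt t/\rho_k(y))^{-N}$ gives genuine geometric decay over the dyadic $t$-blocks.
\end{enumerate}

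Alternatively, your own framework can be repaired by summing over $(j,d)$ pointwise in $y$ before integrating in $x$. Use $\sum\Psi_{(j,d)}=1$; the $d_{\mathcal O}$-Gaussian concentrates $x$ near the $|G|$ points $\sigma(y)$, and $\omega_k$ is $G$-invariant. On the $m$-th block $t\sim 2^{-m}\rho_k(x)^2$ you have $t/|x-y|^2\lesssim 2^{-m}$, and \eqref{Inq_1}--\eqref{Inq_2} control the Gaussian tails. This yields $\sum_{(j,d)}\Phi_{(j,d)}\le C$ without any counting. Neither argument appears in your proposal, so as written the tail, and hence the lemma, is not proved.
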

\begin{proof}
We write 
\begin{align*}
   & \|\mathcal{M}_{(j,d)}f\|_{L_k^1(\mathbb{R}^n)} \leq \|\mathcal{M}_{(j,d)}f\|_{L_k^1\left(B_{(j,d)}^{*}\right)} + \,\|\mathcal{M}_{(j,d)}f\|_{L_k^1\left(\mathbb{R}^n \setminus B_{(j,d)}^{*}\right)} \\
    & \leq \|\mathcal{M}_{(j,d)}f\|_{L_k^1\left(B_{(j,d)}^{*}\right)} + \Bigg\| \sup_{0<t\leq 2^{-j}}\Big| \mathbf{K}_t\left(\Psi_{(j,d)}f\right) \Big| \Bigg\|_{L_k^1\left(\mathbb{R}^n \setminus B_{(j,d)}^{*}\right)} + \Bigg\| \Psi_{(j,d)} \sup_{0<t\leq 2^{-j}}\Big|\mathbf{K}_t(f) \Big|\Bigg\|_{L_k^1(\mathbb{R}^n)}.
\end{align*} Using the Lemma \ref{Lamma B2} and the fact that $f\in H_{\Tilde{\mathcal{L}}_k}^1$, we can deduce the result by proving the following 
   \begin{align*}
\sum_{(j,d)}\|\mathcal{M}_{(j,d)}f\|_{L_k^1(\mathbb{R}^n)} \leq C \|f\|_{L_k^1\left(B_{(j,d)}^{*}\right)}.
\end{align*}
 Now, we consider 
 \begin{align*}
    [\Psi_{(j,d)},\mathbf{K}_t] f(x) &=  [\Psi_{(j,d)},\mathbf{K}_t] \left(\sum_{(j^\prime, d^\prime)}\Psi_{(j^\prime,d^\prime)}f\right)(x)
    =  \sum_{(j^\prime, d^\prime)} \mathcal{P}_{t,(j,d),(j^\prime, d^\prime)}f(x),
 \end{align*}
 where 
\begin{align*}
    \mathcal{P}_{t,(j,d),(j^\prime, d^\prime)}f(x) 
    & = \int_{\mathbb{R}^n}k_t(x,y)f(y) \left( \Psi_{(j,d)}(x)-\psi_{(j,d)}(y) \right) \Psi_{(j^\prime,d^\prime)}(y)w_k(y)dy.
\end{align*}
We define the sets 
\begin{align*}
    \mathcal{I}_{(j,d)} = \{(j^\prime, d^\prime): |x_{(j,d)} - x_{(j^\prime,d^\prime)}| \leq c2^{-j/2}  \}
\end{align*}
and \begin{align*}
    \mathcal{J}_{(j,d)}=\{(j^\prime, d^\prime): |x_{(j,d)} - x_{(j^\prime,d^\prime)}| > c2^{-j/2}  \}.
\end{align*}
By Proposition \ref{cover_1}, the cardinality of the set $\mathcal{I}_{(j,d)}$ is bounded by a constant that is independent of $(j,d)$. Furthermore, we choose sufficiently large ${c}$  so that the balls $B_{(j,d)}^{**}$ and $B_{(j^{\prime},d^{\prime})}^{**}$ are disjoint whenever $(j^{\prime},d^{\prime}) \in \mathcal{J}_{(j,d)}$. Moreover, if $(j^{\prime},d^{\prime}) \in \mathcal{J}_{(j,d)}$, then
\[
|x- y| \sim |x_{(j,d)}, x_{(j^{\prime},d^{\prime})}|
\quad \text{for all } x \in B_{(j,d)} \text{ and } y \in B_{(j^{\prime},d^{\prime})}.
\] We denote $f_{\mathcal{I}_{(j,d)}}(x)= \sum_{(j^{\prime},d^{\prime}) \in \mathcal{I}_{(j,d)}} \Psi_{(j^\prime,d^\prime)}f(x)$  and  $   f_{\mathcal{J}_{(j,d)}}(x)= \sum_{(j^{\prime},d^{\prime}) \in \mathcal{J}_{(j,d)}} \Psi_{(j^\prime,d^\prime)}f(x).$  
Then $ f = f_{\mathcal{I}_{(j,d)}}+ f_{\mathcal{J}_{(j,d)}}$. \\
Using \eqref{heat kernel esti_2},  \eqref{kernel estimate_1}, and the properties of $\Psi_{(j,d)}$,  for $(j^\prime,d^\prime) \in  \mathcal{I}_(j,d)$, we observe that
\begin{align*}
   &\sup\limits_{0<t\leq 2^{-j}} \Big| \mathcal{P}_{t,(j,d),(j^\prime, d^\prime)}f(x)\Big| 
   \\ &\leq  \sup\limits_{0<t\leq 2^{-j}}  \int_{\mathbb{R}^n} k_t(x,y)|f(y) \left( \Psi_{(j,d)}(x)-\Psi_{(j,d)}(y) \right)| \Psi_{(j^\prime,d^\prime)}(y)w_k(y)dy\\
   & \leq C \sup\limits_{0<t\leq 2^{-j}} \int_{\mathbb{R}^n}|x-y|2^{j/2} \frac{\sqrt{t}}{|x-y|} \frac{\exp\left(-\frac{cd_\mathcal{O}(x,y)^2}{t} \right)}{\omega_k\left( B(x, \sqrt{t} \right)}\Psi_{(j^\prime,d^\prime)}(y)|f(y)|w_k(y)dy\\
   &\leq C \sum_{m=0}^\infty \sup\limits_{2^{-(j+m+1)}<t\leq 2^{-(j+m)}} \int\limits_{\mathbb{R}^n} \frac{2^{-(j+m)/2}}{{2^{-j/2}}} \frac{\exp\left(-\frac{cd_\mathcal{O}(x,y)^2}{2^{-(j+m)}} \right)}{\omega_k\left( B(x, 2^{-(j+m)/2} \right)}\Psi_{(j^\prime,d^\prime)}(y)|f(y)|w_k(y)dy.
\end{align*} 
Integrating over $B_{(j,d)}^{*}$ with respect to the variable $x$, and Fubinis theorem gives  
\begin{align*}
    \Bigg\| \sup\limits_{0<t\leq 2^{-j}} \Big| \mathcal{P}_{t,(j,d),(j^\prime, d^\prime)}f(x)\Big|    \Bigg\|_{L_k^1\left(B_{(j,d)}^{*}\right)} \leq C \|\Psi_{(j^\prime,d^\prime)}f\|_{L_k^1(\mathbb{R}^n)}.
\end{align*} Since $(j^\prime, d^\prime)\in \mathcal{I}_{(j,d)}$, we have
$$ \Bigg\| \sup_{0<t\leq 2^{-j}}[\Psi_{(j,d)},\mathbf{K}_t]f_{\mathcal{I}_{(j,d)}}\Bigg\|_{L_k^1\left(B_{(j,d)}^{*}\right)} \leq C \|f\|_{L_k^1(B_{(j,d)})},$$ and consequently 
\begin{align*}
\sum_{(j,d)}\|\mathcal{M}_{(j,d)}f_{\mathcal{I}_{(j,d)}}\|_{L_k^1(\mathbb{R}^n)} \leq C\|f\|_{L_k^1(\mathbb{R}^n)}.
\end{align*}
On the other hand, for  $(j^\prime,d^\prime) \in  \mathcal{J}_{(j,d)}$, one can note that $\Psi_{(j,d)}(y)=0$ for $y\in B_{(j^\prime,d^\prime)}$. Thus,
\begin{align*}
     \Bigg\|&\sup\limits_{0<t\leq 2^{-j}}  \mathcal{P}_{t,(j,d),(j^\prime, d^\prime)}f(x)\Bigg\|_{L_k^1\left( B_{(j,d)}^* \right)}\\ 
     &\leq \int_{\mathbb{R}^n} \chi_{B_{(j,d)}^* } (x)\sup\limits_{0<t\leq 2^{-j}}  \int_{\mathbb{R}^n} k_t(x,y)|f(y)| \Psi_{(j^\prime,d^\prime)}(y)w_k(y)dy\,w_k(x)dx.
\end{align*} Therefore, 
\begin{align*}
\sum_{(j,d)}\Big\|\mathcal{M}_{(j,d)}&f_{\mathcal{J}_{(j,d)}}\Big\|_{L_k^1\left(B_{(j,d)}^*\right)} \leq 
\sum_{(j,d)} \sum_{(j^\prime,d^\prime) \in  \mathcal{J}_{(j,d)}} \Big \| \chi_{B_{(j,d)}^* } \sup_{0<t\leq 2^{-j}} \mathbf{K}_t\left(\Psi_{(j^\prime,d^\prime)}|f|\right)\Big\|_{L_k^1(\mathbb{R}^n)}\\
& \leq \sum_{(j,d)} \sum_{(j^\prime,d^\prime) \in  \mathcal{J}_{(j,d)}} \Big \| \chi_{B_{(j,d)}^* } \sup_{t>0} \mathbf{K}_t\left(\Psi_{(j^\prime,d^\prime)}|f|\right)\Big\|_{L_k^1(\mathbb{R}^n)}\\
& \leq \sum_{(j^\prime,d^\prime)} \sum_{(j,d) \in \mathcal{J}_{(j^\prime,d^\prime)} } \Big \| \chi_{B_{(j,d)}^* } \sup_{t>0} \mathbf{K}_t\left(\Psi_{(j^\prime,d^\prime)}|f|\right)\Big\|_{L_k^1(\mathbb{R}^n)}\\
& \leq C \sum_{(j^\prime,d^\prime)} \Big \|  \sup_{t>0} \mathbf{K}_t\left(\Psi_{(j^\prime,d^\prime)}|f|\right)\Big\|_{L_k^1\left(\mathbb{R}^n \setminus B_{(j^\prime,d^\prime)}^*\right )}\\
& \leq C \sum_{(j^\prime,d^\prime)} \Big \|  \sup_{0<t< 2^{-j^\prime}} \mathbf{K}_t\left(\Psi_{(j^\prime,d^\prime)}|f|\right)\Big\|_{L_k^1\left(\mathbb{R}^n \setminus B_{(j^\prime,d^\prime)}^*\right )} \\
& \quad \quad \quad + C \sum_{(j^\prime,d^\prime)} \sum_{m=0} ^ \infty \Big \|  \sup_{2^{-j^\prime+m}<t\leq 2^{-j^\prime+m+1}} \mathbf{K}_t\left(\Psi_{(j^\prime,d^\prime)}|f|\right)\Big\|_{L_k^1\left(\mathbb{R}^n \setminus B_{(j^\prime,d^\prime)}^*\right )} \\
& = \mathcal{J}_1+ \mathcal{J}_2.
\end{align*}
Using Proposition \ref{cover_1}, and Lemma \ref{lemma-4.1} we have $ \mathcal{J}_1 \leq C\|f\|_{L_k^1(\mathbb{R}^n)}$.  To find the estimate for $\mathcal{J}_2$, we use the semigroup property and the bounds of the kernel \eqref{k-kernel-estimate} and \eqref{heat kernel esti_2}. For $2^{-j^\prime+m} \leq t < 2^{-j^\prime+m+1}$, we obtain 
\begin{align*}
    \int_{\mathbb{R}^n} & k_t(x,y)\Psi_{(j^\prime, d^\prime)}(y)|f(y)|w_k(y)dy \\
    & =  \int_{\mathbb{R}^n}  \int_{\mathbb{R}^n} k_{t-2^{-j^\prime+m-1}}(x,z) k_{2^{-j^\prime+m-1}}(z,y)w_k(z)dz\, \Psi_{(j^\prime,d^\prime)}(y)|f(y)|w_k(y)dy\\
    & \leq C \int_{\mathbb{R}^n}  \int_{\mathbb{R}^n} \frac{\exp \left(-\frac{cd_\mathcal{O}(x,y)^2}{2^{-j^\prime+m-1}} \right)}{\omega_k \left(B(x,2^{(-j^\prime+m)/2})  \right)} k_{2^{-j^\prime+m-1}}(z,y)w_k(z)dz\, \Psi_{(j^\prime,d^\prime)}(y)|f(y)|w_k(y)dy.
\end{align*} 
Integrating with respect to $x$ over the set $\mathbb{R}^n \setminus B_{(j^\prime, d^\prime)}^*$ and using \eqref{k-kernel-estimate} yields
\begin{align*}
    \mathcal{J}_2  & \leq C \sum_{m=0}^\infty \sum_{(j^\prime, d^\prime)} \int_{\mathbb{R}^n} \Psi_{(j^\prime, d^\prime)}(y)|f(y)|\int_{\mathbb{R}^n} k_{2^{-j^\prime+m-1}}(z,y)w_k(z)dz\, w_k(y)dy \\
    & \leq C \sum_{m=0}^\infty \sum_{(j^\prime, d^\prime)} \int_{\mathbb{R}^n} \Psi_{(j^\prime, d^\prime)}(y)|f(y)|\int_{\mathbb{R}^n}  \frac{\exp\left(-\frac{d_\mathcal{O}(z,y)^2}{c2^{-j^\prime+m-1}} \right)}{\omega_k(B(z,2^{(-j^\prime+m-1)/2})} \\
    & \quad \quad \quad \quad\times \left(  \frac{2^{(-j^\prime+m-1)/2}}{\rho_k(y)} \right)^{-N}w_k(z)dz\, w_k(y)dy. 
\end{align*} Note that supp$\left(\Psi_{(j^\prime, d^\prime)}\right) \subset B_{(j^\prime, d^\prime)}$, then  $\rho_k(y) \sim \rho_k(x_{(j^\prime,d^\prime)})$ for all $y\in B_{(j^\prime, d^\prime)}$. Then the above integral boils down to 
\begin{align*}
     \mathcal{J}_2 \leq C\|f\|_{L_k^1(\mathbb{R}^n)}.
\end{align*}
\end{proof}

\begin{proof}[Proof of Theorem \ref{atomic decomposition}]
    We assume that $f\in H^1_{\Tilde{\mathcal{L}}_k}$. Using Lemma \ref{Lamma B2}, we obtain  
    \begin{align*}
        &\Bigg\|  \sup_{0<t \leq 2^{-j}}  \big| H_t\left( 
        \Psi_{(j,d)}f  \right)\big|  \Bigg\|_{L_k^1(\mathbb{R}^n)} \\ & \leq \Bigg\| \sup_{0<t \leq 2^{-j}}  \big| (H_t-\mathbf{K}_t)\left( 
        \Psi_{(j,d)}f  \right)\big|  \Bigg\|_{L_k^1(\mathbb{R}^n)} +\, \Bigg\| \sup_{0<t \leq 2^{-j}}  \big| \mathbf{K}_t\left( 
        \Psi_{(j,d)}f  \right)\big|  \Bigg\|_{L_k^1(\mathbb{R}^n)}\\
        & \leq C \|\Psi_{(j,d)}f\|_{L_k^1(\mathbb{R}^n)} + \Bigg\| \sup_{0<t \leq 2^{-j}}  \big| \mathbf{K}_t\left( 
        \Psi_{(j,d)}f  \right) -\Psi_{(j,d)}\mathbf{K}_t(f)\big|  \Bigg\|_{L_k^1(\mathbb{R}^n)} +   \Bigg\| \sup_{0<t \leq 2^{-j}}\Psi_{(j,d)}\mathbf{K}_t(f)\Bigg\|_{L_k^1(\mathbb{R}^n)}.
    \end{align*} By Lemma \ref{Lemma B3} we conclude that
   \begin{align*}
     \sum_{(j,d)}  \Bigg\| & \sup_{0<t \leq 2^{-j}}  \big| H_t\left( 
        \Psi_{(j,d)}f  \right)\big|  \Bigg\|_{L_k^1(\mathbb{R}^n)} \leq C\left( \|f\|_{L_k^1(\mathbb{R}^n)} + \|f\|_{H_{\Tilde{\mathcal{L}}_k}^1} \right).
   \end{align*}
Therefore,  $\Psi_{(j,d)}f\in H_{k,\text{loc}}^1$ for each $(j,d)$. Then by \cite[Proposition 4]{Hejna-2021}, we write 
    \begin{align*}
        \Psi_{(j,d)}f= \sum_{l=1} ^ \infty C^{(j,d)}_l a_l^{(j,d)}, \quad \text{ with } \sum_{l=1}^ \infty \Big|C^{(j,d)}_l\Big| \leq \Bigg\|  \sup_{0<t \leq 2^{-j}}  \big| H_t\left( 
        \Psi_{(j,d)}f  \right)\big|  \Bigg\|_{L_k^1(\mathbb{R}^n)}.
    \end{align*}  Consequently, we have 
    \begin{align*}
        f = \sum_{(j,d)}\Psi_{(j,d)}f = \sum_{(j,d)}  \sum_{l=1}^ \infty C^{(j,d)}_l a_l^{(j,d)}, \quad \text{and } \sum_{(j,d)}  \sum_{l=1}^ \infty \big|C^{(j,d)}_l\big| \leq  C \|f\|_{H^1_{\Tilde{\mathcal{L}}_k}}.
    \end{align*}  Thus, we get  
    $$  \|f\|_{H_{\Tilde{\mathcal{L}}_k}^{1,\text{atm}}} \leq C \|f\|_{H^1_{\Tilde{\mathcal{L}}_k}}.$$
    On the other hand, it is enough to show the existence of a constant $C>0$, so that 
    \begin{align*}
        \|a\|_{H^1_{\Tilde{\mathcal{L}}_k}} \leq C \quad \text{ for all $H_{\Tilde{\mathcal{L}}_k}^{1,\text{atm}}$-atoms.}
    \end{align*} 
    Let $a$ be an 
    $H_{\Tilde{\mathcal{L}}_k}^{1} \text{-atoms}$ associated with the ball $B(x_0,r_0)$. Then by Definition \ref{atom-def} and \cite[Definition 5]{Hejna-2021}, we obtain 
    $$\Bigg\|  \sup\limits_{0<t \leq 2^{-j}}  \big| H_t\left( 
        a  \right)\big|  \Bigg\|_{L_k^1(\mathbb{R}^n)} \leq C.$$ 
    Using Lemma \ref{Lamma B2}, we deduce that 
    $$ \Bigg\|  \sup\limits_{0<t \leq 2^{-j}}  \big| \mathbf{K}_t\left( 
        a  \right)\big|  \Bigg\|_{L_k^1(\mathbb{R}^n)} \leq C.$$ 
    Now it remains to prove that 
     \begin{align*}
        \Bigg\|  \sup_{t> 2^{-j}}  \big| \mathbf{K}_t\left( 
        a  \right)\big|  \Bigg\|_{L_k^1(\mathbb{R}^n)} \leq C. 
     \end{align*} We write 
     \begin{align*}
         \Bigg\|  \sup_{t> 2^{-j}}  \big| \mathbf{K}_t\left( 
        a  \right)\big|  \Bigg\|_{L_k^1(\mathbb{R}^n)} \leq \sum_{m=0}^\infty \Bigg\|  \sup_{ 2^{-j+m} <t \leq 2^{-j+m+1}}  \big| \mathbf{K}_t\left( 
        a  \right)\big|  \Bigg\|_{L_k^1(\mathbb{R}^n)}.
     \end{align*} Employing the semigroup property, along with \eqref{heat kernel esti_2}, and \eqref{kernel estimate_1}, for $2^{-j+m}< t \leq 2^{-j+m+1}$, we have 
     \begin{align*}
         \int_{\mathbb{R}^n}& k_t(x,y)|a(y)|w_k(y)dy\\
         & =    \int_{\mathbb{R}^n} \int_{\mathbb{R}^n} k_{t-2^{-j+m+1}}(x,z)k_{2^{-j+m+1}}(z,y)\,w_k(z)dz\,|a(y)|w_k(y)dy\\
         & \leq C \int_{\mathbb{R}^n} \int_{\mathbb{R}^n}  \frac{exp\left( -\frac{d_\mathcal{O}(x,z)^2}{2^{-j+m+1}}\right)}{\omega_k(B(z,2^{-j+m}))} k_{2^{-j+m+1}}(z,y)\,w_k(z)dz\,|a(y)|w_k(y)dy.
     \end{align*}
     Taking integration on both sides with respect to $w_k(x)\,dx$, and applying \eqref{k-kernel-estimate} together with Tonelli's theorem, we obtain
\begin{align*}
         \Bigg\|  & \sup_{ 2^{-j+m} <t \leq 2^{-j+m+1}}  \big| \mathbf{K}_t\left( 
        a  \right)\big|  \Bigg\|_{L_k^1(\mathbb{R}^n)} \\
        & \leq C \int_{\mathbb{R}^n} |a(y)| \int_{\mathbb{R}^n}  k_{2^{-j+m+1}}(z,y)\,w_k(z)dz\,w_k(y)dy\\
        & \leq C \int_{\mathbb{R}^n} |a(y)| \int_{\mathbb{R}^n} \frac{exp\left( \frac{d_\mathcal{O}(z,y)} {c2^{-j+m+1}}\right)}{\omega_k\left(B(z,2^{(-j+m+1)/2})\right)} \left(1+ \frac{2^{(-j+m+1)/2}}{\rho_k(y)} \right)^{-N} w_k(z)dz\,w_k(y)dy.
     \end{align*}Since $\operatorname{supp}(a) \subset B(x_0, r_0)$, we have $\rho_k(y) \sim \rho_k(x_0) = 2^{-j/2}$. Therefore,
\begin{align*}
         \sum_{m=0}^\infty \Bigg\|  \sup_{ 2^{-j+m} <t \leq 2^{-j+m+1}}
         \big| \mathbf{K}_t(a) \big|
         \Bigg\|_{L_k^1(\mathbb{R}^n)}
         &\leq C \sum_{m=0}^\infty 2^{-mN/2}
         \|a\|_{L_k^1(\mathbb{R}^n)} \leq C.
\end{align*}
Hence,
\[
\|f\|_{H^1_{\Tilde{\mathcal{L}}_k}}
=
\Bigg\|
\sum_{l=1}^\infty c_l a_l
\Bigg\|_{H^1_{\Tilde{\mathcal{L}}_k}}
\leq
\sum_{l} |c_l| \,
\|a_l\|_{H^1_{\Tilde{\mathcal{L}}_k}}
\leq
C\,
\|f\|_{H_{\Tilde{\mathcal{L}}_k}^{1,\mathrm{atm}}}.
\]
\end{proof}

\subsection{Characterization of dual space}

The following results concern the duality between $H_{\Tilde{\mathcal{L}}_k}^1$ and $BMO(\mathcal{L}_k)$. 
Our approach follows Stein’s method, which is based on the atomic decomposition of Hardy spaces \cite{Stein-book-1}. 
We recall below the definition of a $(1,q)_{\rho_k}$-atom.
\begin{definition}[cf.\cite{Stein-book-1}]
Let $1<q\leq \infty$ and let $a$ be a measurable function on $\mathbb{R}^n$. 
We say that $a$ is a $(1,q)_{\rho_k}$-atom if the following conditions hold:
\begin{enumerate}[(i)]
\item $\operatorname{supp}(a)\subset B(x_0,r_0)$;
\item $\|a\|_{L_k^q(\mathbb{R}^n)} \le \omega_k(B(x_0,r_0))^{-1/p}$, 
where $p$ is the conjugate exponent of $q$;
\item if $r_0 < \rho_k(x_0)$, then
\[
\int_{\mathbb{R}^n} a(x)\,w_k(x)\,dx = 0.
\]
\end{enumerate}
\end{definition}

\begin{theorem}
    There exists a linear isomorphism between the Banach spaces $\left( H_{\Tilde{\mathcal{L}}_k}^1\right)^*$ and $BMO(\mathcal{L}_k)$.
\end{theorem}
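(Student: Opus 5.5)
The plan follows Stein's atomic method, using Theorem \ref{atomic decomposition} to replace $H^1_{\Tilde{\mathcal{L}}_k}$ by $H^{1,\text{atm}}_{\Tilde{\mathcal{L}}_k}$. First I will reconcile the atoms of Definition \ref{atom-def}, whose cancellation is required when $x_0\in\digamma_j$ and $r_0<2^{-1-j/2}$, with $(1,q)_{\rho_k}$-atoms, whose cancellation is required when $r_0<\rho_k(x_0)$. Since $\rho_k(x_0)\sim 2^{-j/2}$ on $\digamma_j$, the two thresholds are comparable. A $(1,q)_{\rho_k}$-atom supported in a ball with $c2^{-j/2}\le r_0<\rho_k(x_0)$ has mean zero and is therefore trivially a multiple of a non-cancelling atom on a comparable ball. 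Conversely, an $L^\infty$ atom without cancellation on a ball with $r_0<\rho_k(x_0)$ can be placed inside the ball $B(x_0,\rho_k(x_0))$, using the doubling property \eqref{doubling property} and \eqref{Volume ratio}. This gives the size bound $\|a\|_\infty\le \omega_k(B(x_0,r_0))^{-1}$, but this is not comparable to $\omega_k(B(x_0,\rho_k(x_0)))^{-1}$ unless $r_0\sim\rho_k$. Hence I will instead show directly that every $(1,q)_{\rho_k}$-atom has $H^1_{\Tilde{\mathcal{L}}_k}$-norm bounded uniformly. This is done by redoing the last part of the proof of Theorem \ref{atomic decomposition} with $L^q$ size in place of $L^\infty$ size. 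The local part uses the $L^q_k$-boundedness of the maximal function on $B(x_0,2r_0)$ together with Hölder. The far part for cancelling atoms uses the smoothness of $h_t$ from \cite{Hejna-2021}. The part $t>\rho_k^2$ uses the decay in Lemma \ref{k-kernel-estimate}. The result is that finite linear combinations of $(1,q)_{\rho_k}$-atoms, or of $L^\infty$ atoms, are dense, with equivalent atomic norms.

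\emph{Step 1: $BMO(\mathcal{L}_k)\subset (H^1_{\Tilde{\mathcal{L}}_k})^*$.} For $f\in BMO(\mathcal{L}_k)$ define $\ell_f(g)=\int fg\,w_k$ on finite atomic sums. Take an atom $a$ on $B=B(x_0,r_0)$.
If $r_0<\rho_k(x_0)$, cancellation and Hölder give
\[
|\ell_f(a)|=\Big|\int_B (f-f_B)a\,w_k\Big|\le \|a\|_{L^q_k}\,\omega_k(B)^{1/p}\Big(\frac{1}{\omega_k(B)}\int_B|f-f_B|^pw_k\Big)^{1/p}\le C\|f\|_{BMO(\mathcal{L}_k)},
\]
by Corollary \ref{Corollary J-N}. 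If $r_0\ge\rho_k(x_0)$, the same estimate holds with $f$ in place of $f-f_B$, using \eqref{corollary 5.4_2}. Hence $|\ell_f(g)|\le C\|f\|_{BMO(\mathcal{L}_k)}\|g\|_{H^{1,\text{atm}}}$ on the dense subspace, so $\ell_f$ extends by continuity. The remaining subtle point is that the extension agrees with the integral for bounded compactly supported $g\in H^1$; I will handle it by truncating $f$ as in Stein.

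\emph{Step 2: surjectivity and the norm bound.} Given $\ell\in(H^1_{\Tilde{\mathcal{L}}_k})^*$, fix a ball $B$. Consider the closed subspace $L^2_{k,0}(B)$ of functions in $L^2_k(B)$ with $\int g\,w_k=0$ when $r<\rho_k(x_0)$, and the whole $L^2_k(B)$ when $r\ge\rho_k(x_0)$. Each such $g$ equals $\|g\|_{L^2_k}\,\omega_k(B)^{1/2}$ times a $(1,2)_{\rho_k}$-atom, so $|\ell(g)|\le C\|\ell\|\,\omega_k(B)^{1/2}\|g\|_{L^2_k}$. By Riesz representation there is $f_B\in L^2_k(B)$ representing $\ell$, unique up to constants on small balls and unique on large balls. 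Exhausting $\mathbb{R}^n$ by $B(0,R)$, with $R$ eventually larger than $\rho_k(0)$ so these balls are large and the representative is unique, gives a consistent $f\in L^2_{k,\text{loc}}$. Duality of $L^2$ against the $(1,2)_{\rho_k}$-atoms then yields
\[
\Big(\tfrac{1}{\omega_k(B)}\int_B|f-f_B|^2w_k\Big)^{1/2}\le C\|\ell\| \quad\text{for small balls, and}\quad \Big(\tfrac{1}{\omega_k(B)}\int_B|f|^2w_k\Big)^{1/2}\le C\|\ell\| \quad\text{for large balls}.
\]
By Cauchy–Schwarz these give $\|f\|_{BMO(\mathcal{L}_k)}\le C\|\ell\|$, and $\ell=\ell_f$ on the dense subspace. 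Injectivity holds because $\ell_f=0$ forces $f$ to be constant on small balls and zero on large balls, so $f=0$.

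The main obstacle is the uniform $H^1_{\Tilde{\mathcal{L}}_k}$ bound for $(1,q)_{\rho_k}$-atoms, in particular for $q=2$ and non-cancelling atoms with $r_0$ comparable to $\rho_k(x_0)$. I would attack it by splitting $\sup_t$ at $t=r_0^2$ and at $t=\rho_k(x_0)^2$. For the first range I would use the $L^2_k$ maximal bound on $B^*$ and Lemma \ref{lemma-4.1}–type tail estimates. For $t$ up to $\rho_k^2$ I would exploit the Dunkl heat kernel Lipschitz estimate combined with Lemma \ref{Lamma B2}. For $t>\rho_k^2$ I would use the factor $(\sqrt t/\rho_k)^{-N}$ from Lemma \ref{k-kernel-estimate}, exactly as at the end of the proof of Theorem \ref{atomic decomposition}.
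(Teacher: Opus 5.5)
Your proposal is correct and follows the same route as the paper. The paper likewise bounds $\int f a\,w_k$ on atoms, using cancellation when $r_0<\rho_k(x_0)$ and the large-ball average otherwise. For the converse it represents the functional by Riesz on $L^2_k(B)$ via $(1,2)_{\rho_k}$-atoms, works on the mean-zero subspace for small balls, and patches the representatives along large balls.

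You are more careful than the paper, which uses the uniform $H^1_{\Tilde{\mathcal{L}}_k}$ bound for $(1,2)_{\rho_k}$-atoms and the extension/truncation issue without comment. Your worry about non-cancelling atoms with $r_0<\rho_k(x_0)$ is unnecessary: Definition \ref{atom-def} forces $r_0\ge 2^{-1-j/2}\ge \rho_k(x_0)/2$ for such atoms, so they are bounded multiples of non-cancelling atoms on $B(x_0,\rho_k(x_0))$.
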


\begin{proof}
Let $f \in BMO(\mathcal{L}_k)$, we define $\Phi_f$ as 
\begin{align*}
    \Phi_f(h) = \int_{\mathbb{R}^n} f(x)h(x)w_k(x)dx \quad \text{for } h\in H_{\Tilde{\mathcal{L}}_k}^1.
\end{align*}
By the atomic decomposition of $H_{\Tilde{\mathcal{L}}_k}^1$, it suffices to check the boundedness of  $\Phi_f$ on $H_{\Tilde{\mathcal{L}}_k}^{1} \text{-atoms}$. Let $a$ be an atom with $\operatorname{supp}(a)\subseteq B_0=B(x_0,r_0)$. \\

\noindent  If $r_0 < \rho_k(x_0)$, then using the cancellation property of $a$ we obtain
\begin{align*}
|\Phi_f(a)|
&= \left| \int_{\mathbb{R}^n} f(x)a(x)w_k(x)\,dx \right|
 = \left| \int_{B_0}\big(f(x)-f_{B_0}\big)a(x)w_k(x)\,dx \right| \\
&\le \frac{1}{\omega_k(B_0)} \int_{B_0} |f(x)-f_{B_0}|\, w_k(x)\,dx
\le \|f\|_{BMO(\mathcal{L}_k)} .
\end{align*}
On the other hand, if $r_0 \geq \rho_k(x_0)$, we use the size condition of $a$ and obtain
\begin{align*}
|\Phi_f(a)|
= \left| \int_{\mathbb{R}^n} f(x)a(x)w_k(x)\,dx \right|
\le \frac{1}{\omega_k(B_0)} \int_{B_0} |f(x)|\, w_k(x)\,dx
\le \|f\|_{BMO(\mathcal{L}_k)} .
\end{align*}
Since finite linear combinations of atoms are dense in $H_{\Tilde{\mathcal{L}}_k}^1$, it follows that $\Phi_f$ extends to a continuous linear functional on $H_{\Tilde{\mathcal{L}}_k}^1$, with
\[
\|\Phi_f\| \leq \|f\|_{BMO(\mathcal{L}_k)}.
\]
We now consider $\Phi \in \left(H_{\Tilde{\mathcal{L}}_k}^1\right)^*$. Let $B_0 = B(x_0,r_0)$ with $r_0 \ge \rho_k(x_0)$, and let $g \in L_k^2(B_0)$ be nonzero. Define
\[
a = \frac{g}{\|g\|_{L_k^2(B_0)}\, \omega_k(B_0)^{1/2}} .
\]
Then $a$ is a $(1,2)$-atom. Since $\Phi$ is continuous, we have
\[
|\Phi(a)| \le \|a\|_{H_{\Tilde{\mathcal{L}}_k}^1}\,\|\Phi\|.
\]
Hence, for every $g \in L_k^2(B_0)$,
\[
|\Phi(g)| \le C \|\Phi\| \|g\|_{L_k^2(B_0)} \omega_k(B_0)^{1/2},
\]
where $C$ is the constant appearing in Theorem~\ref{atomic decomposition}. It follows that $\Phi$ defines a bounded linear functional on $L_k^2(B_0)$ with operator norm bounded by $C\|\Phi\|\omega_k(B_0)^{1/2}$. By the Riesz representation theorem, there exists $f_\Phi^{B_0} \in L_k^2(B_0)$ such that
\[
\Phi(g) = \int_{B_0} g(x) f_\Phi^{B_0}(x) w_k(x)\,dx,
\]
and
\[
\|f_\Phi^{B_0}\|_{L_k^2(B_0)} \le C \|\Phi\| \omega_k(B_0)^{1/2}.
\]
\noindent If $B(y,r_1) \subset B(y,r_2)$ with $r_1 \ge \rho_k(y)$, then for every $g \in L_k^2(B(y,r_1))$,
\[
\Phi(g)
= \int_{B(y,r_1)} g(x) f_\Phi^{B(y,r_1)}(x) w_k(x)\,dx
= \int_{B(y,r_1)} g(x) f_\Phi^{B(y,r_2)}(x) w_k(x)\,dx.
\]
Hence $f_\Phi^{B(y,r_1)} = f_\Phi^{B(y,r_2)}$ a.e. on $B(y,r_1)$. Consequently, there exists a unique function $f \in L_{k,\mathrm{loc}}^2(\mathbb{R}^n)$ such that
\[
\Phi(g) = \int_B g(x) f(x) w_k(x)\,dx
\]
for every $g \in L_k^2(\mathbb{R}^n)$ with $\operatorname{supp}(g)\subset B = B(y,r)$ and $r \ge \rho_k(y)$. In particular, this holds for all $(1,\infty)$-atoms and their finite linear combinations. Hence, $\Phi$ admits an integral representation on a dense subset and extends to the whole space $H_{\Tilde{\mathcal{L}}_k}^1$. It remains to show that $f \in BMO(\mathcal{L}_k)$. Let $B = B(y,r)$ with $r \geq \rho_k(y)$. Then
\[
\frac{1}{\omega_k(B)} \int_B |f(x)| w_k(x)\,dx
\le \frac{\|f\|_{L_k^2(B)}}{\omega_k(B)^{1/2}}
\le C \|\Phi\|.
\]
For balls $B = B(y,s)$ with $s < \rho_k(y)$, take a nonzero $g \in L_k^2(B)$ satisfying
\[
\int_B g(x) w_k(x)\,dx = 0.
\]
In this case, the representative $f$ is determined by taking the modulo with constants, that is, as an element of $L_k^2(B)/\mathcal C$. Moreover,
\[
\inf_{c \in \mathcal C} \|f-c\|_{L_k^2(B)}
\le C \|\Phi\| \omega_k(B)^{1/2}.
\]
By Corollary~\ref{Corollary J-N},
\[
\frac{1}{\omega_k(B)} \int_B |f-f_B| w_k(x)\,dx
\le \left(
\frac{1}{\omega_k(B)} \int_B |f-f_B|^2 w_k(x)\,dx
\right)^{1/2}
\le C \|\Phi\|,
\]
for every ball whose radius is smaller than the critical radius. Therefore $f \in BMO(\mathcal{L}_k)$.
\end{proof}

\section{Maximal operators on $BMO(\mathcal{L}_k)$ space}\label{S:5}
In this section, we study the behavior of maximal operators on the space $BMO(\mathcal{L}_k)$. In the classical setting, the maximal operator on $BMO$ is either identically infinite or satisfies
\[
\|\mathcal{M}f\|_{BMO} \leq C \|f\|_{BMO}.
\]
Bennett \textit{et al.} proved the boundedness of the maximal function restricted to a cube $Q$ on the space $BMO(Q)$ \cite{Bennett}. Later, Dziuba\'nski \textit{et al.} established the boundedness of maximal operators on the space $BMO(\mathcal{L})$. Similar results also hold in the setting of the Heisenberg group \cite{Lin} and metric measure spaces \cite{Dafni-2012}.
 In our analysis, we use the weighted uncentered Hardy-Littlewood maximal function,  which we denote as $\mathcal{M}_k$ and defined 
 \begin{align*}
     \mathcal{M}_kf(x) =  \sup\limits_{x\in B} \frac{1}{\omega_k(B)} \int_{B}|f(y)|w_k(y)dy  \quad \text{ for } f\in L_{k, \text{loc}}^1 .
 \end{align*}
In the Dunkl setting, there are many versions of the maximal operators. For more details, readers can refer to \cite{Almeida}.
 We decompose the family of all balls in $\mathbb{R}^n$ into two disjoint subcollections according to their radii. Define
     \begin{align*}
         \mathcal{B}_1 & = \{B= B(x, r): r \leq {\rho_k}(x) \text{ where } x\in \mathbb{R}^n \} 
         \end{align*} and
         \begin{align*}
          \mathcal{B}_2 & = \{B= B(x, s): s > {\rho_k}(x) \text{ where } x\in \mathbb{R}^n \}. 
     \end{align*}
Associated with the subcollection $\mathcal{B}_1$ and $\mathcal{B}_2$, we define the localized maximal operator 
\begin{align*}
    \mathcal{M}_{\mathcal{B}_1}f(x) = \sup_{\substack{B \in \mathcal{B}_1 \\ x \in B}} \frac{1}{\omega_k(B)} \int_B |f(y)|\, w_k(y)dy, 
\end{align*}
and 
\begin{align*}
  \mathcal{M}_{\mathcal{B}_2}f(x) = \sup_{\substack{B \in \mathcal{B}_2 \\ x \in B}} \frac{1}{\omega_k(B)} \int_B |f(y)|\, w_k(y)dy, 
\end{align*}
respectively. Then 
\begin{align}\label{maximal functiona}
    \mathcal{M}_k f(x) = \max\{\mathcal{M}_{\mathcal{B}_1}f(x), \mathcal{M}_{\mathcal{B}_2}f(x)\}.
\end{align}

We begin by defining the maximal function over the $BMO(\mathcal{L}_k)$ space and further discuss the boundedness property.   Before proving the boundedness of the weighted maximal function over the $BMO(\mathcal{L}_k)$ space, we observe some properties of $\mathcal{M}_k$ function over $BMO(\mathcal{L}_k)$.

\begin{proposition}
Let $x_0\in \mathbb{R}^n$, $C_0 \geq 1$, and $f\in BMO(\mathcal{L}_k)$. Then $\mathcal{M}_k f(x)$ is finite almost everywhere on $B(x_0, C_0{\rho_k}(x_0))$.
\end{proposition}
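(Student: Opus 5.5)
Set $\rho_0=\rho_k(x_0)$, $B_0=B(x_0,C_0\rho_0)$ and $\widetilde B=B(x_0,4C_0\rho_0)$. For $x\in B_0$ I split $f=f\chi_{\widetilde B}+f\chi_{\mathbb{R}^n\setminus\widetilde B}=f_1+f_2$, so that $\mathcal{M}_kf\le \mathcal{M}_kf_1+\mathcal{M}_kf_2$. Each piece gets its own argument.

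\emph{Local part.} Since $4C_0\rho_0\ge\rho_k(x_0)$, the second condition in the definition of $BMO(\mathcal{L}_k)$ gives
\[
\int_{\widetilde B}|f|\,w_k\,dx\le \|f\|_{BMO(\mathcal{L}_k)}\,\omega_k(\widetilde B)<\infty ,
\]
so $f_1\in L^1_k(\mathbb{R}^n)$. The measure $\omega_k$ is doubling by \eqref{doubling property}, so the uncentered maximal operator $\mathcal{M}_k$ is of weak type $(1,1)$ with respect to $\omega_k$; this follows from the standard Vitali covering argument on spaces of homogeneous type. Hence $\mathcal{M}_kf_1<\infty$ $\omega_k$-a.e. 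The measures $\omega_k$ and Lebesgue measure have the same null sets, because $w_k>0$ off a finite union of hyperplanes. So $\mathcal{M}_kf_1<\infty$ a.e. on $B_0$.

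\emph{Far part.} I claim $\mathcal{M}_kf_2$ is bounded on $B_0$. Let $x\in B_0$ and let $B=B(z,r)\ni x$ be any ball with $\int_B|f_2|w_k\,dx\neq0$. Then $B$ meets both $B_0$ and $\mathbb{R}^n\setminus\widetilde B$, so its diameter is at least $3C_0\rho_0$, which gives $r\ge \tfrac32 C_0\rho_0$. It follows that $B\subset B(x_0,3r)=:B'$. Also $3r\ge\rho_0=\rho_k(x_0)$, so the definition of $BMO(\mathcal{L}_k)$ together with doubling yields
\[
\frac{1}{\omega_k(B)}\int_B|f_2|\,w_k\,dx\le \frac{\omega_k(B')}{\omega_k(B)}\cdot\frac{1}{\omega_k(B')}\int_{B'}|f|\,w_k\,dx\le C\,\|f\|_{BMO(\mathcal{L}_k)} .
\]
The ratio $\omega_k(B')/\omega_k(B)$ is bounded by a constant because $B'\subset B(z,6r)$ and \eqref{doubling property} applies. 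Therefore $\mathcal{M}_kf_2\le C\|f\|_{BMO(\mathcal{L}_k)}$ on all of $B_0$.

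Combining the two parts gives the proposition. The step I expect to need the most care is the geometric containment in the far part. The other point to check is that the weak-type $(1,1)$ bound for the \emph{uncentered} maximal function is legitimate with respect to the doubling measure $\omega_k$; I would cite the general homogeneous-space theory for it rather than reprove it.
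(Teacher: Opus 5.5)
Your proof is correct and follows the same route as the paper. You split $f$ into a piece supported on a fixed enlargement of $B(x_0,C_0\rho_k(x_0))$, handled by the weak-type $(1,1)$ bound, and a far piece; its averages over balls meeting $B(x_0,C_0\rho_k(x_0))$ are controlled by the large-ball condition of $BMO(\mathcal{L}_k)$ on a concentric ball of radius comparable to $r$, together with doubling. The differences are cosmetic: you enlarge by $4$ rather than $2$, use $B(x_0,3r)$ instead of $B(x_0,4r)$, and add the remark that $\omega_k$ and Lebesgue measure share null sets. Your geometric containments check out.
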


\begin{proof}
Decompose $f = f_1 + f_2$, where $f_1 = f \chi_{B(x_0, C_0 {\rho_k}(x_0))^*}$ and $f_2 = f \chi_{\mathbb{R}^n \setminus B(x_0, C_0 {\rho_k}(x_0))^*}$. Since $f_1 \in L_k^1(\mathbb{R}^n)$, the weighted maximal operator $\mathcal{M}_k f_1$ is finite almost everywhere on $\mathbb{R}^n$.

\noindent Now we consider $\mathcal{M}_k f_2$. Let $x \in B(x_0, C_0 {\rho_k}(x_0))$. For any ball $B $ containing $x$,
\begin{align*}
    \frac{1}{\omega_k(B)} \int_B |f_2(\xi)|\, w_k(\xi)\, d\xi \neq 0
\quad \text{only if} \quad
B \cap B(x_0, C_0 {\rho_k}(x_0))^{*} \neq \emptyset.
\end{align*}
Writing $B = B(y,r_y)$, this implies $2r_y > C_0 {\rho_k}(x_0)$, and hence $B \subset B(x_0, 4 r_y)$ and 
\begin{align*}
    \frac{1}{\omega_k(B(y, r_y))} \int_{B(y, r_y)} |f_2(\xi)| w_k(\xi) \, d\xi \leq \frac{1}{\omega_k(B(y, r_y))} \int_{B(x_0, 4 r_y)} |f(\xi)| w_k(\xi) \, d\xi.
\end{align*}
Since $f_1 \in L_k^1(\mathbb{R}^n)$, it follows from the weak-type $(1,1)$ boundedness of $\mathcal{M}_k$ that $\mathcal{M}_k f_1(x)$ is finite almost everywhere on $\mathbb{R}^n$. Consequently, we have
\begin{align}
    \frac{1}{\omega_k(B(y, r_y))} & \int_{B(y, r_y)} |f_2(\xi)| w_k(\xi) \, d\xi \notag \\
    & \leq C_{w_k} \frac{\omega_k(B(x_0, 4 r_y))}{\omega_k(B(y, r_y))}  \frac{1}{\omega_k(B(x_0, 4 r_y))} \int_{B(x_0, 4 r_y)} |f(\xi)| w_k(\xi) \, d\xi \notag\\
    &\leq C_{w_k}^2 4^{n + 2 \gamma} \|f\|_{BMO(\mathcal{L}_k)} < \infty. \label{Eq_5.1}
\end{align}
\end{proof}
In the following proposition, we establish the local integrability of the function $\mathcal{M}_kf$, whenever $f\in BMO(\mathcal{L}_k)$
\begin{proposition}
Let $f\in BMO(\mathcal{L}_k)$. Then $\mathcal{M}_kf \in L^1_{k,\text{loc}}(\mathbb{R}^n)$. 
\end{proposition}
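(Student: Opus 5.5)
To prove that $\mathcal{M}_kf\in L^1_{k,\text{loc}}(\mathbb{R}^n)$ it suffices to show that $\int_{B}\mathcal{M}_kf(x)\,w_k(x)dx<\infty$ for every ball $B$. Every compact set is covered by finitely many balls of the form $B(x_0,C_0\rho_k(x_0))$, so I fix $x_0\in\mathbb{R}^n$ and $C_0\ge 1$ and work on $B_0=B(x_0,C_0\rho_k(x_0))$. I would reuse the splitting from the previous proposition:
\[
f=f_1+f_2,\qquad f_1=f\chi_{B_0^*},\qquad f_2=f\chi_{\mathbb{R}^n\setminus B_0^*}.
\]
Since $\mathcal{M}_k$ is sublinear, $\mathcal{M}_kf\le \mathcal{M}_kf_1+\mathcal{M}_kf_2$, and I will estimate the two pieces separately on $B_0$.

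\emph{The term $\mathcal{M}_kf_2$.} The estimate \eqref{Eq_5.1} in the proof of the previous proposition gives a pointwise bound that is uniform in the point. For every $x\in B_0$,
\[
\mathcal{M}_kf_2(x)\le C_{w_k}^2 4^{n+2\gamma}\|f\|_{BMO(\mathcal{L}_k)}.
\]
Here one must check that the enlarged ball $B(x_0,4r_y)$ appearing there has radius at least $\rho_k(x_0)$. This holds because $2r_y>C_0\rho_k(x_0)$ and $C_0\ge 1$, so $4r_y\ge \rho_k(x_0)$, and the second condition in the definition of $BMO(\mathcal{L}_k)$ applies. Consequently
\[
\int_{B_0}\mathcal{M}_kf_2\,w_k\,dx\le C\|f\|_{BMO(\mathcal{L}_k)}\,\omega_k(B_0).
\]

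\emph{The term $\mathcal{M}_kf_1$.} This is the main step, since the weak type $(1,1)$ bound alone does not give integrability. I will use instead that $\mathcal{M}_k$ is bounded on $L^p_k$ for $1<p<\infty$. This is standard for the uncentered maximal operator on a space of homogeneous type: interpolate the weak $(1,1)$ bound, which follows from the Vitali covering lemma and the doubling property \eqref{doubling property}, with the trivial $L^\infty$ bound. Take $p=2$. By H\"older's inequality,
\[
\int_{B_0}\mathcal{M}_kf_1\,w_k\,dx\le \omega_k(B_0)^{1/2}\|\mathcal{M}_kf_1\|_{L^2_k}\le C\,\omega_k(B_0)^{1/2}\|f\|_{L^2_k(B_0^*)}.
\]
The ball $B_0^*$ has center $x_0$ and radius $2C_0\rho_k(x_0)\ge\rho_k(x_0)$. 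Therefore \eqref{corollary 5.4_2} with $p=2$ gives
\[
\|f\|_{L^2_k(B_0^*)}\le C^*\|f\|_{BMO(\mathcal{L}_k)}\,\omega_k(B_0^*)^{1/2},
\]
and by doubling $\omega_k(B_0^*)\le C_{w_k}\omega_k(B_0)$.

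Combining the two estimates yields
\[
\int_{B_0}\mathcal{M}_kf\,w_k\,dx\le C\,\omega_k(B_0)\,\|f\|_{BMO(\mathcal{L}_k)}<\infty.
\]
By the covering remark at the start, this gives $\mathcal{M}_kf\in L^1_{k,\text{loc}}(\mathbb{R}^n)$. The only external ingredient is the $L^2_k$-boundedness of $\mathcal{M}_k$, which I would cite from the theory of spaces of homogeneous type.
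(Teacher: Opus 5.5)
Your proof is correct, but it takes a different route from the paper's proof of this proposition.

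The paper splits the operator rather than the function. It writes $\mathcal{M}_kf=\max\{\mathcal{M}_{\mathcal{B}_1}f,\mathcal{M}_{\mathcal{B}_2}f\}$, according to whether the averaging ball $B(y,r)$ has $r\le\rho_k(y)$ or $r>\rho_k(y)$. The piece $\mathcal{M}_{\mathcal{B}_2}f$ is bounded pointwise by $\|f\|_{BMO(\mathcal{L}_k)}$ straight from the definition. For $\mathcal{M}_{\mathcal{B}_1}f$, the paper uses the slow-variation estimate for $\rho_k$ from \cite[Lemma 8]{Hejna-2021}. This shows that every ball of $\mathcal{B}_1$ meeting an arbitrary ball $B_0$ lies in a fixed enlargement $\widetilde{B_0}$, so $\mathcal{M}_{\mathcal{B}_1}f\le\mathcal{M}_k(f\chi_{\widetilde{B_0}})$ on $B_0$. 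From there, $L^2_k$-boundedness of $\mathcal{M}_k$ and Corollary~\ref{Corollary J-N} finish exactly as in your $f_1$ step.

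You instead enlarge $B_0$ to radius at least $\rho_k(x_0)$ and split $f$ spatially. Every large-ball average you then need is over a ball centred at the single point $x_0$. The second condition in the definition of $BMO(\mathcal{L}_k)$ therefore applies directly, and no comparison of $\rho_k$ at different points is needed, so Lemma 8 drops out. A by-product is the scale-invariant bound $\frac{1}{\omega_k(B_0)}\int_{B_0}\mathcal{M}_kf\,w_k\,dx\le C\|f\|_{BMO(\mathcal{L}_k)}$ for every ball with $r_0\ge\rho_k(x_0)$. This is precisely Case~1 of the paper's later theorem on the boundedness of $\mathcal{M}_k$, where the paper itself uses your splitting.

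The paper's ball-family decomposition lets it work on an arbitrary $B_0$ without enlarging it, at the cost of the extra lemma on $\rho_k$. For local integrability alone, your enlargement costs nothing and the argument is shorter and more self-contained.
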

\begin{proof}
To prove $\mathcal{M}_kf \in L^1_{k,\text{loc}}(\mathbb{R}^n)$, it follows from \eqref{maximal functiona} that it is enough to show that
\(
\mathcal{M}_{\mathcal{B}_1}f,\,
\mathcal{M}_{\mathcal{B}_2}f \in L_k^1(B_0),
\)
where $B_0 = B(x_0,r_0)$ with $r_0 < \infty$ and $x_0 \in \mathbb{R}^n$.\\

\noindent For $\mathcal{M}_{\mathcal{B}_2}f$, it follows directly from the fact that $f \in BMO(\mathcal{L}_k)$ that
\[
  \int_{B_0} \mathcal{M}_{\mathcal{B}_2}f(x)\,w_k(x)\,dx
  \leq
  \omega_k(B_0)\|f\|_{BMO(\mathcal{L}_k)}
  < \infty.
\]
On the other hand, for every $x\in B_0$,  \cite[Lemma 8 ]{Hejna-2021} gives the existence of $c, \kappa_0>0$ so that
\begin{align} \label{ineq*}
    c^{-1} {\rho_k}(x_0) \left(1+ \frac{|x-x_0|}{{\rho_k}(x_0)} \right)^{-\kappa_0} \leq {\rho_k}(x) \leq c {\rho_k}(x_0) \left(1+ \frac{|x-x_0|}{{\rho_k}(x_0)} \right)^{\frac{\kappa_0}{\kappa_0+1}} .
\end{align} 
The inequality \eqref{ineq*} rewritten as 
\begin{align} \label{Ineq*-1}
    c^{-1}  \left(1+ \frac{r_0}{{\rho_k}(x_0)} \right)^{-\kappa_0} \leq \frac{{\rho_k}(x)}{{\rho_k}(x_0)} \leq c \left(1+ \frac{r_0}{{\rho_k}(x_0)} \right)^{\frac{\kappa_0}{\kappa_0+1}}.
\end{align}
Moreover, if $x\in B_0 \cap B $ and $B\in \mathcal{B}_1$, where $B=B(y, r)$, then 
\begin{align}\label{Ineq*-2}
    c^{-1} {\rho_k}(y) 2^{-\kappa_0} \leq {\rho_k}(x) \leq c {\rho_k}(y)2^{\frac{\kappa_0}{\kappa_0+1}}.
\end{align}
Combining \eqref{Ineq*-1} and \eqref{Ineq*-2}, we have 
\begin{align*}
    (c^{-1})^2 2^{-\frac{\kappa_0}{\kappa_0+1}}\left(1+ \frac{r_0}{{\rho_k}(x_0)} \right)^{-\kappa_0}  \leq  \frac{{\rho_k}(y)}{{\rho_k}(x_0)} \leq  c^2 2^{\left(\kappa_0+\frac{\kappa_0}{\kappa_0+1}\right)} \left(1+ \frac{r_0}{{\rho_k}(x_0)} \right)^{\frac{\kappa_0}{\kappa_0+1}}. 
\end{align*}
Let $R_0=r_0+(1+2C){\rho_k}(x_0)$, where $C=c^2 2^{\left(\kappa_0+\frac{\kappa_0}{\kappa_0+1}\right)} \left(1+ \frac{r_0}{{\rho_k}(x_0)} \right)^{\frac{\kappa_0}{\kappa_0+1}}$. Then $B\subset \widetilde{B_0}=B(x_0, R_0)$ for all balls $B$ such that $B\in \mathcal{B}_1$ and $B\cap B_0 \neq \emptyset$.
Therefore, for $x\in B_0$ we have 
\begin{align*}
    \mathcal{M}_{\mathcal{B}_1}f(x) \leq \mathcal{M}_k(f\chi_{\widetilde{B_0}})(x).
\end{align*} At this point, Corollary \ref{Corollary J-N} implies that $f\chi_{\widetilde{B_0}} \in L_k^2(\mathbb{R}^n)$. Invoking the $L^2$ boundedness of the weighted maximal function \cite{Alto-2011} and H\"older's inequality reduces that 
\begin{align*}
    \int_{B_0}\mathcal{M}_k(f\chi_{\widetilde{B_0}})(x)w_k(x)dx &\leq \omega_k(B_0)^{\frac{1}{2}}\|\mathcal{M}_k(f\chi_{\widetilde{B_0}})\|_{L_k^2(\mathbb{R}^n)}\\
    & \leq C  \omega_k(B_0)^{\frac{1}{2}} \|f\|_{L_k^2({\widetilde{B_0}})}\\
    & \leq C w_k(\widetilde{B_0})\|f\|_{BMO(\mathcal{L}_k)} < \infty.
\end{align*} Thus, we have the integrability.
\end{proof}
\begin{note}
Observe that we have shown the local integrability of the function $\mathcal{M}_kf$ whenever $f\in BMO(\mathcal{L}_k)$. In the same way one  can  prove the local $p$-integrability of $\mathcal{M}_kf$, where $p\in [1, \infty)$. Using the fact $f\in BMO(\mathcal{L}_k)$, we have  $$\int_{B_0}|\mathcal{M}_{\mathcal{B}_2}f(x)|^p w_k(x)dx \leq \omega_k(B_0)\|f\|^p_{BMO(\mathcal{L}_k)}.$$ 
Together with the $L^p$ boundedness of the weighted maximal function \cite{Alto-2011} and Corollary \ref{Corollary J-N}, it is straightforward that
\begin{align*}
\int_{B_0}|\mathcal{M}_{\mathcal{B}_1}f(x)|^pw_k(x)dx & \leq \int_{B_0}|\mathcal{M}_k(f\chi_{\widetilde{B_0}})(x)|^pw_k(x)dx\\
    &\leq \|\mathcal{M}_k(f\chi_{\widetilde{B_0}})\|^p_{L_k^p(\mathbb{R}^n)}\\
    & \leq C\|f\|^p_{L_k^p({\widetilde{B_0}})}\\
    & \leq C w_k(\widetilde{B_0})\|f\|^p_{BMO(\mathcal{L}_k)} < \infty.
\end{align*}
\end{note}
In the following theorem, we prove the boundedness of the maximal function on the space $BMO(\mathcal{L}_k)$. Our approach is inspired by the method developed in  \cite{Bennett} and \cite{Lin}. 
\begin{theorem}
    The maximal operator $\mathcal{M}_k$ is a bounded operator on $BMO(\mathcal{L}_k)$ space.
\end{theorem}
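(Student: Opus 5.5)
We must show $\|\mathcal{M}_kf\|_{BMO(\mathcal{L}_k)}\le C\|f\|_{BMO(\mathcal{L}_k)}$. Since $\mathcal{M}_kf=\mathcal{M}_k|f|$ and $\||f|\|_{BMO(\mathcal{L}_k)}\le 2\|f\|_{BMO(\mathcal{L}_k)}$, we may assume $f\ge 0$. By the preceding propositions, $\mathcal{M}_kf$ is finite a.e. and locally integrable, so both defining quantities of $BMO(\mathcal{L}_k)$ make sense. We check them separately, treating large and small balls differently.

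\emph{Large balls.} Let $B=B(x_0,r)$ with $r\ge\rho_k(x_0)$. Write $\mathcal{M}_kf=\max\{\mathcal{M}_{\mathcal{B}_1}f,\mathcal{M}_{\mathcal{B}_2}f\}$ as in \eqref{maximal functiona}.
\begin{enumerate}[$(a)$]
\item For the $\mathcal{B}_2$ part, every ball in $\mathcal{B}_2$ has radius exceeding the critical radius of its center. Hence $\mathcal{M}_{\mathcal{B}_2}f\le\|f\|_{BMO(\mathcal{L}_k)}$ pointwise.
\item For the $\mathcal{B}_1$ part, use the argument from the local integrability proposition. Lemma~8 of \cite{Hejna-2021} (inequality \eqref{ineq*}) shows that every ball of $\mathcal{B}_1$ meeting $B$ lies in $\widetilde B=B(x_0,C'r)$. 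The constant $C'$ is uniform once $r\ge\rho_k(x_0)$, because then $\rho_k(y)\lesssim \rho_k(x_0)(1+r/\rho_k(x_0))^{\kappa_0/(\kappa_0+1)}\lesssim r$. Therefore $\mathcal{M}_{\mathcal{B}_1}f\le \mathcal{M}_k(f\chi_{\widetilde B})$ on $B$.
\item By Cauchy--Schwarz, the $L_k^2$ boundedness of $\mathcal{M}_k$, doubling, and \eqref{corollary 5.4_2} with $p=2$ on $\widetilde B$ (whose radius exceeds $\rho_k(x_0)$),
\[
\frac{1}{\omega_k(B)}\int_B\mathcal{M}_{\mathcal{B}_1}f\,w_k\le C\Big(\frac{1}{\omega_k(\widetilde B)}\int_{\widetilde B}f^2w_k\Big)^{1/2}\le C\|f\|_{BMO(\mathcal{L}_k)}.
\]
\end{enumerate}

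\emph{Small balls.} Let $B=B(x_0,s)$ with $s<\rho_k(x_0)$. It suffices to bound $\frac{1}{\omega_k(B)}\int_B|\mathcal{M}_kf-c|\,w_k$ for some constant $c$. We follow Bennett--DeVore--Sharpley \cite{Bennett}, as adapted in \cite{Lin}.
\begin{enumerate}[$(a)$]
\item Split the balls containing points of $B$ into ``small'' ones (radius $\le s$) and ``large'' ones (radius $>s$). This gives $\mathcal{M}_kf=\max\{M^{s}f,M_{s}f\}$ on $B$.
\item Small balls meeting $B$ lie in $2B^*=B(x_0,4s)$. Then $M^sf\le \mathcal{M}_k((f-f_{4B})\chi_{4B})+f_{4B}$, where $4B=B(x_0,4s)$.
\item Hence the oscillation of $M^sf$ on $B$ relative to $f_{4B}$ is controlled by $\|\mathcal{M}_k\|_{L^2_k\to L^2_k}$ times the $L^2$-oscillation of $f$ on $4B$. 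By \eqref{Corollary 5.4_1} this is $\le C\|f\|_{BMO_k}\le C\|f\|_{BMO(\mathcal{L}_k)}$ via \eqref{bmo-BMO}.
\item For large balls, $M_sf$ should be nearly constant on $B$. Any large ball $B'\ni x$ with $x\in B$ is contained in a ball $B''$ of comparable radius containing all of $B$, and $|f_{B'}-f_{B''}|\le C\|f\|_{BMO_k}$ by doubling. So the values $M_sf(x)$ and $M_sf(x')$ for $x,x'\in B$ differ by at most $C\|f\|_{BMO_k}$, as in \cite{Bennett}.
\item Combining, and using $|\max\{a,b\}-\max\{a',b'\}|\le\max\{|a-a'|,|b-b'|\}$, we take $c=\max\{f_{4B},M_sf(x_0)\}$. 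This gives $\frac{1}{\omega_k(B)}\int_B|\mathcal{M}_kf-c|\le C\|f\|_{BMO(\mathcal{L}_k)}$, and hence the same bound with $c$ replaced by $(\mathcal{M}_kf)_B$, up to a factor $2$.
\end{enumerate}

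The main obstacle is step $(d)$ of the small-ball case: proving that $M_sf$ is essentially constant on $B$. This needs finiteness of $M_sf$ (which follows from the finiteness proposition together with $\mathcal{M}_kf\in L^1_{k,\mathrm{loc}}$) and a careful comparison of averages over enlarged balls using only doubling of $\omega_k$, since Calder\'on--Zygmund cube decompositions are unavailable. I would first try the direct enlargement argument. If that fails, I would fall back on the dyadic-type chain argument of \cite{Lin}.
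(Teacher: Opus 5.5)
Your proof is correct up to a one-line repair noted below, but it is organized differently from the paper's in both cases.

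\textbf{Large balls.} For $B=B(x_0,r)$ with $r\ge\rho_k(x_0)$, the paper splits the \emph{function} as $f=f\chi_{B^*}+f\chi_{\mathbb{R}^n\setminus B^*}$. The local piece is handled by the $L_k^2$ bound for $\mathcal{M}_k$ and \eqref{corollary 5.4_2}. For the far piece, any ball through a point of $B$ that reaches outside $B^*$ has radius $r_y>r/2$ and lies in $B(x_0,4r_y)$. That ball is centred at $x_0$ with supercritical radius, so only doubling is needed.

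You instead split the \emph{family of balls} by comparing the radius with the critical radius of the centre. This gives the clean pointwise bound $\mathcal{M}_{\mathcal{B}_2}f\le\|f\|_{BMO(\mathcal{L}_k)}$. The cost is that containing the $\mathcal{B}_1$-balls in $B(x_0,C'r)$ requires Lemma~8 of \cite{Hejna-2021}. That step must go through a point $x\in B\cap B'$:
\begin{enumerate}[$(1)$]
\item $|x-y|<t\le\rho_k(y)$ gives $\rho_k(y)\lesssim\rho_k(x)$;
\item $|x-x_0|<r$ then gives $\rho_k(x)\lesssim r$.
\end{enumerate}
Bounding $\rho_k(y)$ directly through $|y-x_0|$ is circular.

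\textbf{Small balls.} The paper uses the Bennett--DeVore--Sharpley identity
\[
\int_{B_0}|g-g_{B_0}|\,w_k=2\int_{\{g>g_{B_0}\}}(g-g_{B_0})\,w_k, \qquad g=\mathcal{M}_kf,
\]
and splits balls according to whether they lie in $B_0^*$. Since $(\mathcal{M}_kf)_{B_0}$ automatically dominates $f_{B_0^*}$ and the average over any ball containing $B_0$, only one-sided bounds are needed, together with local integrability of $\mathcal{M}_kf$. Your fixed-constant version gives a more transparent picture. On a subcritical ball, $\mathcal{M}_kf$ is the maximum of a locally $L_k^2$-controlled function and a function of oscillation $O(\|f\|_{BMO_k})$. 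The price is that you need two-sided control.

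\textbf{The repair.} Step $(c)$ only gives $M^sf-f_{4B}\le\mathcal{M}_k\big((f-f_{4B})\chi_{4B}\big)$, whereas $(e)$ uses $|M^sf-f_{4B}|$. You can close this in either of two ways.
\begin{enumerate}[$(1)$]
\item Add the lower bound $M^sf(x)\ge f_{B(x,s)}\ge f_{4B}-C\|f\|_{BMO_k}$. This follows from doubling, since $B(x,s)\subset 4B\subset B(x,5s)$.
\item Observe that $4B$ is itself a ball of radius $>s$ containing every point of $B$. Hence $M_sf\ge f_{4B}$ on $B$, so $c=M_sf(x_0)$ and only $(M^sf-f_{4B})_+$ enters.
\end{enumerate}

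\textbf{Step $(d)$.} The step you flag as the main obstacle is already complete as written. For $B'=B(y,t)\ni x$ with $t>s$, the ball $B''=B(y,3t)$ contains $B$ and has radius $>s$. So $f_{B'}\le f_{B''}+C\|f\|_{BMO_k}\le M_sf(x')+C\|f\|_{BMO_k}$ for every $x'\in B$. Since $\mathcal{M}_kf<\infty$ a.e., $M_sf$ is then finite on all of $B$, and no chain argument is required.
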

\begin{proof} We split the proof into two cases based on the two cases in the definition of $BMO(\mathcal{L}_k)$.
 \begin{enumerate}[$i)$]

 \item Case-1: $B=B(x_0,r_0)$ for $x_0\in \mathbb{R}^n$ and $r_0\geq {\rho_k}(x_0)$. Here we have to show 
     \begin{align} \label{case-2}
         \frac{1}{\omega_k(B)}\int_B |\mathcal{M}_kf(y)|w_k(y)dy \,\leq \,C\|f\|_{BMO(\mathcal{L}_k)}.
     \end{align} To this end, we decompose the function $f=f_1+f_2$, where $f_1= f\chi_{B^*}$ and $f_2=f\chi_{\mathbb{R}^n\setminus B^*}$.  In light of  \eqref{Eq_5.1}, we observe that 
     \begin{align} \label{f_2}
         \frac{1}{\omega_k(B)}\int_B |\mathcal{M}_kf_2(y)|w_k(y)dy \,\leq \,C\|f\|_{BMO(\mathcal{L}_k)}.
     \end{align} 
     For the function $f_1$, we use the boundedness of weighted maximal function $\mathcal{M}_k$  on $L_k^2(\mathbb{R}^n)$ \cite{Alto-2011} and Corollary \ref{Corollary J-N} yields that 
     \begin{align}
         \frac{1}{\omega_k(B)}\int_B |\mathcal{M}_kf_1(x)|w_k(x)dx& \leq \frac{1}{\omega_k(B)}\left( \int_B |\mathcal{M}_kf_1(x)|^2w_k(x)dx \right)^{\frac{1}{2}} (\omega_k(B))^{\frac{1}{2}}\notag \\
         &  \leq C \left(\frac{1}{\omega_k(B)} \int_B |f_1(x)|^2w_k(x)dx \right)^{\frac{1}{2}} \notag \\
          &  \leq C \left(\frac{1}{\omega_k(B)} \int_B |f(x)|^2w_k(x)dx \right)^{\frac{1}{2}} \notag \\
          & \leq C \|f\|_{BMO(\mathcal{L}_k)}. \label{f_1}
     \end{align}

     \item Case-2: $B_0=B(x_0, r_0)$ for $x_0 \in \mathbb{R}^n$ and $r_0 < {\rho_k}(x_0)$. The desired estimate we have to prove is 
     \begin{align} \label{case-1}
         \frac{1}{\omega_k(B_0)} \int_{B_0} |\mathcal{M}_kf(y)- (\mathcal{M}_kf)_{B_0}\big| w_k(y)dy \leq C\|f\|_{BMO(\mathcal{L}_k)}.
     \end{align} We may assume that $f$ is nonnegative function.
     We decompose the family of all balls in $\mathbb{R}^n$ into two disjoint subcollections as 
     \begin{align*}
         \mathcal{B}^1 & = \{B:  B= B(y_0, r) \subset B_0^*, \text{ where $y\in \mathbb{R}^n$ and $r>0$}\} 
         \end{align*} and
         \begin{align*}
          \mathcal{B}^2 & = \{B: B= B(y_0, r): B \not\subset B_0^*, \text{ where $y\in \mathbb{R}^n$ and $r>0$} \}. 
     \end{align*}
Associated with the subcollection $\mathcal{B}^1$ and $\mathcal{B}^2$, we define the localized maximal operator 
\begin{align*}
   \mathcal{M}_{\mathcal{B}^1}f(x) =  \sup_{\substack{B \in \mathcal{B}^1 \\ x \in B}} \frac{1}{\omega_k(B)} \int_B f(y)\, w_k(y)dy, 
\end{align*}
and 
\begin{align*}
  \mathcal{M}_{\mathcal{B}^2}f(x)  = \sup_{\substack{B \in \mathcal{B}^2 \\ x \in B}} \frac{1}{\omega_k(B)} \int_B f(y)\, w_k(y)dy, 
\end{align*}
respectively. Clearly $\mathcal{M}_k f(x) = \max\{\mathcal{M}_{\mathcal{B}^1}f(x), \mathcal{M}_{\mathcal{B}^2}f(x)\}$, and if $ \Omega_0=\{x\in B_0:\, \mathcal{M}_kf(x) \geq (\mathcal{M}_kf)_{B_0} \}, \,  \Omega_1= \{x\in \Omega_0: \mathcal{M}_{\mathcal{B}^1}f(x) \geq \mathcal{M}_{\mathcal{B}^2}f(x)\}, \text{ and } \Omega_2 = \Omega_0\setminus\Omega_1. $ Then 
    \begin{align*}
        \frac{1}{\omega_k(B_0)} &\int_{B_0} |\mathcal{M}_kf(x) - \left( \mathcal{M}_kf\right)_{B_0}|w_k(x)dx \\  &= \frac{2}{\omega_k(B_0)} \int_{\Omega_0} \left( \mathcal{M}_kf(x) - \left( \mathcal{M}_kf\right)_{B_0} \right) w_k(x)dx\\
        & = \frac{2}{\omega_k(B_0)}
        \sum\limits_{j=1}^2\int_{\Omega_j} \left( \mathcal{M}_{\mathcal{B}^j}f(x) - \left( \mathcal{M}_kf\right)_{B_0} \right) w_k(x)dx. 
    \end{align*}
We deduce \eqref{case-1} by establishing that 
\begin{align*} 
   \frac{1}{\omega_k(B_0)} \int_{\Omega_j} \left( \mathcal{M}_{\mathcal{B}^j}f(x) - \left( \mathcal{M}_kf\right)_{B_0} \right) w_k(x)dx \leq C \|f\|_{BMO(\mathcal{L}_k)} \quad \text{ for } j=1,2.
\end{align*}
Consider the function $\Tilde{f_1}= \left( f-f_{B_0^*}\right)\chi_{B_0^*}$.  Since  $f_{B_0^*} \leq \left(\mathcal{M}_{k}f\right)_{B_0} $, we have $$\mathcal{M}_{\mathcal{B}^1}f(x) \leq \mathcal{M}_k(\Tilde{f_1})(x)+\left(\mathcal{M}_{k}f\right)_{B_0}.$$ Then Corollary \ref{Corollary J-N} implies that
\begin{align*}
    \frac{1}{\omega_k(B_0)} \int_{\Omega_1} \left( \mathcal{M}_{\mathcal{B}^1}f(x) - \left( \mathcal{M}_kf\right)_{B_0} \right) w_k(x)dx 
    & \leq  \frac{1}{\omega_k(B_0)} \int_{\Omega_1} {\mathcal{M}_k}\Tilde{f_1}(x) w_k(x)dx \\
    & \leq \left( \frac{1}{\omega_k(B_0)} \int_{\Omega_1} {\mathcal{M}_k}\Tilde{f_1}(x)^2 w_k(x)dx \ \right)^{\frac{1}{2}}\\
    & \leq \left( \frac{C}{\omega_k(B_0)} \int_{B_0} \Tilde{f_1}(x)^2 w_k(x)dx \ \right)^{\frac{1}{2}} \\
    & \leq C^* \|f\|_{BMO(\mathcal{L}_k)}.
\end{align*} 
On the other hand, let $x\in \Omega_2$ and $B^\prime \in \mathcal{B}^2$ with $B^\prime \cap (B_0^*)^c \neq \emptyset$. Consider $B^{\prime\prime}=B^{\prime ***}$. Then  $B_0\subset B^{\prime\prime}$, we have $f_{B^{\prime\prime}} \leq \left(\mathcal{M}_kf\right)_{B_0}$. Thus we obtained 
\begin{align*}
    f_{B^\prime}-\left(\mathcal{M}_kf\right)_{B_0} \leq f_{B^\prime} - f_{B^{\prime\prime}} \leq C\|f\|_{BMO(\mathcal{L}_k)}.
\end{align*}
Taking supremum over $\mathcal{B}^2$, we get
\begin{align*}
    \mathcal{M}_{\mathcal{B}^2}f(x)- \left(\mathcal{M}_kf\right)_{B_0} \leq C\|f\|_{BMO(\mathcal{L}_k)}.
\end{align*} Consequently,
\begin{align*}
    \frac{1}{\omega_k(B_0)}\int_{\Omega_2} \left( \mathcal{M}_{\mathcal{B}^2}f(x) - \left( \mathcal{M}_kf\right)_{B_0} \right) w_k(x)dx \leq C \|f\|_{BMO(\mathcal{L}_k)}.
\end{align*}This completes the proof.
 \end{enumerate}
\end{proof}
\noindent\textbf{Acknowledgment:}
      The authors extend their gratitude for the financial support provided by the UGC Government of India (221610147795) for the first author and Anusandhan National Research Foundation (ANRF) with the reference number-SUR/2022/005678, for the second author. Their funding and resources were instrumental in the completion of this work.   \\
\textbf{Declarations:}
\\
\textbf{Conflict of interest:}
We would like to declare that we do not have any conflict of interest.
\\
\textbf{Data availability:} No data source is needed.
 

\begin{thebibliography}{Md.}
 
\bibitem{Alto-2011} D. Aalto, L. Berkovits, O.E. Kansanen, H. Yue, John-{N}irenberg lemmas for a doubling measure. Studia Math. \textbf{204}(1):21-37 (2011). 

\bibitem{Almeida} V.Almeida, J.J. Betancor, J.C. Fariña, L. Rodr\'iguez-Mesa, Maximal, Littlewood-Paley, Variation, and oscillation operators in the Dunkl setting. J. Fourier Anal. Appl. \textbf{30}(5):60(1-41) (2024).
 
\bibitem{Amri-2018}B. Amri, A. Hammi,  Dunkl-Schr\"odinger operators. Complex Anal. Oper. Theory. \textbf{13}(3):1033-1058 (2019).

\bibitem{Amri-2021}B. Amri, A. Hammi, Semigroup and Reisz transform for the Dunkl-Schr\"odinger operators. Semigroup Forum. \textbf{101}(3):507-533 (2020).

\bibitem{Hejna-2019}J.P. Anker, J. Dziuba\'nski, A.  Hejna,  Harmonic functions, conjugate harmonic functions and the {H}ardy space {$H^1$} in the rational {D}unkl setting. J. Fourier Anal. Appl. \textbf{25}(5):2356-2418 (2019).

\bibitem{Badr}N. Badr, B. Ben Ali, $L^p$ Boundedness of the Riesz transform related to Schr\"odinger operators on a manifold. Ann. Scuola Norm. Sup. di Pisa Cl. Sci. 5. \textbf{8}(4):725-765 (2009).

\bibitem{Bennett}C. Bennett, R.A. DeVore, R. Sharpley, 
Weak-{$L\sp{\infty }$}\ and {BMO}. Ann. of Math. \textbf{113}(3):{601-611} (1981).



\bibitem{Cherednik} I. Cherednik, Double Affine Hecke Algebras, London Math. Soc. Lect. Note Ser., vol. 319, Cambridge Univ. Press, Cambridge (2005).

\bibitem{Christ}M. Christ, D. Geller, Singular integral characterizations of Hardy spaces on homogeneous groups. Duke Math. J. \textbf{51}:547-598 (1984).


\bibitem{Dafni-2012}G. Dafni, H. Yue, Some characterizations of local bmo and $h^1$ on metric measure spaces. Anal. Math. Phys. \textbf{2}(3):285-318 (2012).

\bibitem{DJ1} M.F.E. de Jeu, The Dunkl transform. Invent. Math. \textbf{113}(1):147-162 (1993).

 \bibitem{D2}C.F. Dunkl, Differential-difference operators associated to reflection groups. Trans. Amer. Math. Soc. \textbf{311}(1):167-183 (1989). 

 \bibitem{Dunkl-Xu}C.F. Dunkl, Y. Xu, Orthogonal Polynomials of Several Variables, Encyclopedia Math. Appl., vol. 81, Cambridge Univ. Press, (2001).


\bibitem{Dziubanski-1999} J. Dziuba\'nski, J. Zienkiewicz, Hardy spaces {$H^1$} associated to Schrodinger operator with potential satisfying reverse H\"older inequality. Rev. Mat. Iberoam. \textbf{15}(2):277-95 (1999).

\bibitem{Dziubanski-2005}J. Dziuba\'nski, G.  Garrig\'os,  T. Mart\'inez, J.L. Torrea,  J. Zienkiewicz,  $BMO$ spaces related to Schr\"odinger operators with potentials satisfying a reverse H\"older inequality. Math. Z.  \textbf{249}(2):329-356 (2005).


\bibitem{Jacek-2004} J. Dziuba\'nski, J. Zienkiewicz, Hardy spaces {$H^1$} for {S}chr\"odinger operators with
certain potentials. Studia Math. \textbf{164}(1):39-53 (2004).

\bibitem{Jacek-2023}J. Dziuba\'nski, A. Hejna, A note on commutators of singular integrals with {BMO} and {VMO} functions in the {D}unkl setting. Math. Nachr. \textbf{297}(2):629-643 (2023).


\bibitem{Jacek-2025} J. Dziuba\'nski, A. Hejna,
On {D}unkl {S}chr\"odinger semigroups with {G}reen bounded potentials. Constr. Approx. \textbf{61}(3):481-509 (2025). 


\bibitem{Etingof} P. Etingof, Calogero-Moser systems and representation theory, Zurich Lectures in Advanced
Mathematics, European Mathematical Society (EMS), Z\"urichl (2007).

\bibitem{Fefferman-1972}C. Fefferman, E.M. Stein, $H^p$ spaces of several variables. Acta Math. \textbf{129}(3–4):137–193 (1972).


\bibitem{Goldberg} D. Goldberg, A local version of real {H}ardy spaces. Duke Math. J. \textbf{46}(1):27-42 (1979).

\bibitem{Graczyk} P. Graczyk, M. R\"osler, M. Yor (Eds.), Harmonic and Stochastic Analysis of Dunkl Processes,Travaux en cours 71, Hermann, Paris (2008).

\bibitem{Guo}Q. Guo, J. Li, B.D. Wick, Fefferman–Stein type decomposition of CMO spaces in the Dunkl setting and an application. Nonlinear Anal. \textbf{262}:113916,27 (2026). 

 \bibitem{Heckman}G.J. Heckman,  Dunkl operators. Séminaire Bourbaki 828, 1996–97; Astérisque 245, 223-246 (1997).

\bibitem{Hejna-2020}A. Hejna, Hardy spaces for the {D}unkl harmonic oscillator. Math. Nachr. \textbf{293}(11):2112-2139 (2020).

\bibitem{Hejna-2021}A. Hejna, Schr\"odinger operators with reverse H\"older class potentials in the Dunkl setting and Hardy spaces. J. Fourier Anal. Appl. \textbf{27}(3):46,42 (2021).

\bibitem{Hejna-2021_2}A. Hejna, Behavior of eigenvalues of certain {S}chr\"odinger operators in the rational {D}unkl setting. Anal. Math. Phys. \textbf{11}(3):116 (2021).

\bibitem{Jiu}J. Jiu, Z. Li, The dual of the Hardy space associated with the Dunkl operators. Adv. Math. \textbf{412}:108810, 55 (2023).


\bibitem{John-Nirenberg-1961}F. John,  L. Nirenberg, On functions of bounded oscillation. Comm. Pure Appl. Math. \textbf{14}(3):415-426 (1961).

\bibitem{Lin_C}C.C. Lin, H. Liu, Y. Liu, Hardy spaces associated with Schr\"odinger operators on the Heisenberg group, preprint available at  arXiv:1106.4960. 

\bibitem{Lin} C.C. Lin, H. Liu, $BMO_L(\mathbb{H}^n) $ spaces and Carleson measures for Schr\"odinger operators, Adv. Math. \textbf{228}:1631-1688 (2011).

\bibitem{Macdonald}I.G. Macdonald, Affine Hecke Algebras and Orthogonal Polynomials, Cambridge Tracts Math. vol. 157, Cambridge Univ. Press, Cambridge (2003).

\bibitem{Opdam-2000}E.M. Opdam, Lecture notes on Dunkl operators for real and complex reflection Groups. MSJ Memoirs, vol.8,  Math. Soc. of Japan, Tokyo, (2000).

\bibitem{TV-15}M. R\"orsler,  A positive radial product formula for the Dunkl kernel. Trans. Amer. Math. Soc., \textbf{355}(6):2413-2438 (2003).

\bibitem{Rosler-03} M. R\"osler, Dunkl operators: theory and applications, in: E. Koelink, W. Van Assche (Eds.), Orthogonal Polynomials and Special Functions, in: Lecture Notes in Math. vol. 1817, Springer-Verlag, Berlin, 93-135 (2003).

\bibitem{Shen-1995} Z. Shen,  $L^p$ estimates for the Schr\"odinger operators with certain potentials.  Ann. Inst. Fourier \textbf{45}(2):513-546 (1995).

\bibitem{Stein-book-1}  E.M. Stein, Harmonic analysis: Real-variable methods, orthogonality, and oscillatory integrals, Princeton University Press, Princeton, NJ,
 (1993). 

\bibitem{Stein-book-2}E.M. Stein, Singular integrals and differentiability properties of functions. Princeton University Press, Princeton (1970).
 
\bibitem{Stein-1960} E.M. Stein, G. Weiss, On the theory of harmonic functions of several variables. I. The theory of $H^p$-spaces. Acta Math. \textbf{103}:25-62 (1960).

\end{thebibliography}
\end{document}